\newcommand{\cl}{\ell}
\newcommand{\CO}{\mathcal O}
\newcommand{\BR}{\mathbb{R}}
\newcommand{\al}{\alpha}
\newcommand{\be}{\beta}
\newcommand{\ga}{\gamma}
\newcommand{\de}{\delta}
\newcommand{\ME}{\mathbf E}
\newcommand{\fn}{\mathbf{n}}
\newcommand{\CF}{\mathcal F}
\newcommand{\CG}{\mathcal G}
\newcommand{\MP}{\mathbf P}
\newcommand{\CA}{\mathcal A}
\newcommand{\CK}{\mathcal K}
\newcommand{\Oa}{\Omega}
\newcommand{\oa}{\omega}
\newcommand{\si}{\sigma}
\newcommand{\pa}{\partial}
\renewcommand{\phi}{\varphi}
\newcommand{\eps}{\varepsilon}
\newcommand{\la}{\lambda}
\newcommand{\Ra}{\Rightarrow}
\newcommand{\ol}{\overline}
\newcommand{\norm}[1]{\left\lVert#1\right\rVert}
\renewcommand{\comment}[1]{}
\newcommand{\cm}{\mathfrak{m}}
\newcommand{\mg}{\mathfrak{g}}
\newcommand{\md}{\mathrm{d}}
\DeclareMathOperator{\tr}{tr}
\DeclareMathOperator{\mes}{mes}
\DeclareMathOperator{\Int}{int}
\DeclareMathOperator{\dist}{dist}
\DeclareMathOperator{\CU}{\mathcal U}
\begin{document}

\theoremstyle{plain}
\newtheorem{thm}{Theorem}[section]
\newtheorem*{thmnonumber}{Theorem}
\newtheorem{lemma}[thm]{Lemma}
\newtheorem{prop}[thm]{Proposition}
\newtheorem{cor}[thm]{Corollary}
\newtheorem{open}[thm]{Open Problem}

\theoremstyle{definition}
\newtheorem{defn}{Definition}
\newtheorem{asmp}{Assumption}
\newtheorem{notn}{Notation}
\newtheorem{prb}{Problem}

\theoremstyle{remark}
\newtheorem{rmk}{Remark}
\newtheorem{exm}{Example}
\newtheorem{clm}{Claim}

\author{Andrey Sarantsev}

\title[Penalty Method for Obliquely Reflected Diffusions]{Penalty Method for Obliquely Reflected Diffusions}

\address{University of Nevada, 1664 North Virginia Street, Reno, NV 89557}

\email{asarantsev@unr.edu}

\keywords{reflected diffusion, reflected Brownian motion, stochastic differential equation, oblique reflection, weak convergence, penalty method.}

\subjclass[2010]{Primary 60J60, secondary 60J55, 60J65, 60H10.}

\date{Received March 24, 2020. Accepted July 19, 2021.}

\begin{abstract}
Take a multidimensional normally or obliquely reflected diffusion in a smooth domain. Approximate it by solutions of stochastic differential equations without reflection using the penalty method. That is, we approximate the reflection term with an additional drift term. In the existing literature, usually a specific approximating sequence is provided in order to prove existence of a reflected diffusion. In this article, we provide general sufficient conditions on the approximating coefficients. \end{abstract}

\maketitle

\section{Introduction} 

\thispagestyle{empty}

If $W = (W(t),\, t \ge 0)$ is a Brownian motion on the real line, then $|W| = (|W(t)|,\, t \ge 0)$ is a reflected Brownian motion on the positive half-line $\mathbb R_+ := [0, \infty)$. This process $|W|$ behaves as a Brownian motion as long as it stays away from the origin on the positive half-line. As it hits the origin, it is reflected back to the positive half-line. We can generalize this to a reflected Brownian motion $Z = (Z(t),\, t \ge 0)$ on the half-line with a drift coefficient $b$ and a diffusion coefficient $\si^2$. 

\smallskip

More generally, we can make $b$ and $\sigma$ to be dependent on the current position $Z(t)$ of the process $Z$. In this case, this process behaves as a solution to a one-dimensional stochastic differential equation with drift coefficient $b(\cdot)$ and diffusion coefficient $\sigma^2(\cdot)$, as long as it stays away from the origin. This process $Z$ is then called a {\it reflected diffusion on the half-line} with {\it drift coefficient} $b(\cdot)$ and {\it diffusion coefficient} $\sigma^2(\cdot)$.  The concept of a reflected diffusion on the half-line was introduced in the  papers \cite{McKean1963, Skorohod1961a, Skorohod1961b}.

\smallskip

Yet more generally, we consider a reflected Brownian motion and a reflected diffusion in a domain (an open connected subset) in a Euclidean domain. Let us informally describe this process, with formal definition postponed until Section 2. Throughout this paper, fix a dimension $d \ge 2$. Take a domain (an open connected subset) $D \subseteq \BR^d$ with $C^2$ (twice continuously differentiable) boundary $\pa D$, and denote its closure by $\ol{D}$. For every point $x \in \pa D$ on the boundary, let $\fn(x)$ be the inward unit normal vector. 

\smallskip

Take a continuous {\it reflection field} $r : \pa D \to \BR^d$ with the property $r(x)\cdot\fn(x) = 1$ for $x \in \pa D$. If $r(x) \equiv \fn(x)$, we say this corresponds to a {\it normal reflection}. Otherwise, this is called an {\it oblique reflection}. Take a {\it drift vector field} $b : \ol{D} \to \BR^d$. For every $x \in \ol{D}$, let $A(x)$ be a symmetric positive definite $d\times d$-matrix. 
Let us describe the concept of a {\it multidimensional obliquely reflected diffusion} $Z = (Z(t), t \ge 0)$ in $\ol{D}$, with {\it drift vector field} $b$, {\it covariance matrix field} $A$, and {\it reflection field} $r$. This is a process which:

\smallskip

(a) behaves as a $d$-dimensional solution of a stochastic differential equation with certain drift vector field $b(\cdot)$ and certain covariance matrix field $A(\cdot)$, as long as it is inside $D$;

\smallskip

(b) as this process hits the boundary $\pa D$ at a point $x \in \pa D$, it is reflected back inside the domain $D$ according to the vector $r(x)$. 

\smallskip

If the fields $b$ and $A$ are constant, this process is called a {\it reflected Brownian motion in} $\ol{D}$ with {\it drift vector} $b$ and {\it covariance matrix} $A$. 

\smallskip

Reflected Brownian motions and reflected diffusions were extensively studied over the past half-century. The construction and study of reflected diffusions in multidimensional domains were done in numerous articles, including \cite{BassHsu, Costas, DW1995, DupuisIshii1991, DI, Fukushima, HR1981a,  LS1984, PilipenkoBook, Saisho, SV1971, Tanaka1979, Watanabe1971a, Watanabe1971b}, including the case of oblique reflection. Note that it is possible to define a reflected diffusion in a non-smooth domain, for example a convex polyhedron. However, we shall limit ourselves to a smooth domain, for reasons explained below. 

\smallskip

Applications of reflected Brownian motions and reflected diffusions include  queueing theory \cite{HW1987b}, stochastic finance \cite{BFK, OrderBook}, and transport processes in chemistry \cite{Transport}. 

\smallskip

The goal of this paper is to approximate a reflected diffusion $Z = (Z(t), t \ge 0)$ by a non-reflected diffusion $X = (X(t), t \ge 0)$. We do this by replacing the reflection by an additional drift term $f(X(t))\,\md t$. This gives us $X$ which is a solution to a stochastic differential equation with drift vector field $b + f$ and covariance matrix $A$. This additional drift vector field $f(\cdot)$ is custom-designed so that it penalizes the process $X$ for wandering away from the domain $D$. This motivates the name {\it penalty method} for this type of approximation. We state our main results in Theorems~\ref{thm:1} and~\ref{thm:2} in general terms, providing conditions on the vector field $f(\cdot)$ which are sufficient for the penalty method to work. We also provide a class of concrete examples in Theorem~\ref{thm:3}. Let us informally explain the statement of Theorems~\ref{thm:1} and~\ref{thm:2}. We present a weak approximation of the process $Z$ by a sequence $(X_n)_{n \ge 1}$ of solutions of stochastic differential equations with drift vector fields $b + f_n$ and covariance matrix $A$. Here, $f_n(X_n(t))\,\md t$ gives us the penalty term emulating the reflection. To this end, we need to impose the following conditions on the sequence $(f_n)_{n \ge 1}$ of drift vector fields:

\smallskip

(a) Locally uniformly on $D$, $f_n(x) \to 0$ as $n \to \infty$, to avoid an additional drift inside $D$;

\smallskip

(b) For large $n$, and for points $x$ close to the boundary $\pa D$, the drift vector $f_n(x)$ has the direction which is close to the direction of the reflection field $r(y)$, for $y \in \pa D$ close to $x$:
$$
\frac{f_n(x)}{\norm{f_n(x)}} \approx \frac{r(y)}{\norm{r(y)}}\ \ \mbox{for}\ \ x \approx y \in \pa D,
$$
where $x \approx y$ stands for $x$ being close to $y$ in the Euclidean distance;

\smallskip

(c) For large $n$ and for $x$ close to the boundary $\pa D$, the magnitude $\norm{f_n(x)}$ is large enough to prevent the process $X_n$ from wandering far away from the domain $D$. 

\smallskip

Condition (c) ensures that the process $X_n$ is penalized for straying far away from $D$. As mentioned before, this is the reason we call this the {\it penalty method}. 

\begin{rmk} Actually, in Theorems~\ref{thm:1}, ~\ref{thm:2}, we allow for an additional degree of freedom: instead of the covariance matrix field $A$, we can have covariance matrix field $A_n$, with $A_n \to A$ locally uniformly in an open neighborhood of $\ol{D}$. 
\end{rmk}

The idea of the penalty method is not new. For example, penalty method was used in the classic paper \cite{LS1984} to prove existence of obliquely reflected diffusions. We shall survey some other articles on the penalty method below. 

\smallskip

However, all these articles provide a concrete example of an approximating sequence. We tried to answer the following question: What are the most general conditions under which the sequence of penalized non-reflected diffusions weakly converges to the given reflected diffusion? To the best of our knowledge, no article exists which provides a general answer to such question. This is the goal of the current article. 

\smallskip

One possible future application of such general result might include efficient simulation of reflected diffusions. Wong-Zakai approximation (Euler-type schemes) for normally reflected diffusions were studied in \cite{Aida, Petterson3, Slo7}, and for obliquely reflected diffusions in \cite{Evans}. One can try to simulate reflected diffusions by choosing an appropriate penalty drift, and then simulating the resulting stochastic differential equation without reflection via the standard Wong-Zakai procedure. The next question is what penalty term from our results is better for such simulation. We leave this topic for future research. 

\smallskip

\subsection{Review of the existing literature on the penalty method} Let us now survey existing literature on the penalty method, which applies both to reflected stochastic differential equations, and to their deterministic analogue, the {\it Skorohod problem}. The literature on normal reflection is more extensive. Probably the most popular penalty function in the literature is 
\begin{equation}
\label{eq:popular}
f_n(x) := n(x - \Pi(x)).
\end{equation}
Here, $\Pi(x)$ is the {\it projection} of $x$ onto $D$: the closest point on $\ol{D}$ to $x$. This function is used in the papers \cite{Aux1, Slo4, Menaldi, Petterson1, Petterson2, Pilipenko-Penalty, Slo1, Storm} for normally reflected diffusion in a convex domain, and in the papers \cite{Slo2, Slo3, Men1, Men2}  for stochastic differential equations with jumps. The foundational article for multiple dimension case is \cite{SV1971}, where authors approximate the process the sequence of continuous time Markov chains. The diffusion is allowed to be time-inhomogeneous. The paper \cite{Cepa} applies the function~\eqref{eq:popular} to a multivalued Skorohod problem.  The penalty method with penalty function~\eqref{eq:popular} was also used in \cite{Karoui} to approximate reflected backward stochastic differential equations (BSDE) on the real line with non-reflected BSDE and to prove existence of a solution of a reflected BSDE. In the multidimensional setting, similar work was carried out in \cite{Slo5, Slo6}. The papers \cite{Portenko1979, Portenko1979a} deal with similar situations, considering a Brownian motion with a drift coefficient  which is, in fact, a distribution. 

\smallskip

For oblique reflection, we need to use a different penalty drift than~\eqref{eq:popular}, or to modify it. The paper \cite{Shalaumov} contains the case of the halfspace $D := \BR^{d-1}\times (0, \infty)$ with general oblique reflection, with the penalty function $f_n(x) = nB(x)$ for a suitably chosen vector field $B :\BR^d \to \BR^d$, which is continuous and is equal to $0$ in $D$. The papers \cite{PardouxWilliams, WilliamsZheng1990} apply penalty method for obliquely reflected diffusions in smooth domains. However, they consider a class of {\it conormal} reflection fields, which includes the normal reflection, but does not cover all directions of oblique reflection. Also, they do this for the case when the reflected diffusion is in its stationary distribution. The paper \cite{SoftlyRBM} also deals with stationary distributions for the penalized Brownian motion in a convex polyhedron, which is intended to approximate a semimartingale reflected Brownian motion in this polyhedron; the authors call this {\it soft reflection}. The corresponding penalty function in this case is $U(x) = e^{-\la x}$ for $\la > 0$, multiplied by a proper vector to make for oblique reflection. Ranked Brownian particles on the real line reflecting upon each other are studied in \cite{Spohn}; they form an obliquely reflected Brownian motion in a Weyl chamber $\{y_1 \le \ldots \le y_d\}$, and are approximated using the penalty method in \cite{Spohn}. 

\smallskip

This paper is somewhat close in its spirit to \cite{Kang}, which shows invariance principle for an obliquely reflected Brownian motion in a piecewise smooth domain. However, our setting is different: an obliquely reflected diffusion instead of a Brownian motion, and a smooth instead of a piecewise smooth domain. In addition, the conditions on the approximating sequence $(X_n)_{n \ge 1}$ are quite different in \cite{Kang} from here. In effect, in the paper \cite{Kang}, they presume that the condition (c) mentioned above already holds. This makes the paper \cite{Kang} closer to a different paper \cite{MyOwn9} of ours, which studies approximation of reflected diffusions by other reflected diffusions. 

\smallskip

Let us stress that, unlike in some of the papers cited above, we are not interested in using penalty method to prove existence of a reflected diffusion. Rather, we already assume weak existence and uniqueness in law of this obliquely reflected diffusion. 

\smallskip

Our companion paper \cite{MyOwn7} deals with the case of reflected diffusions on the positive half-line $\BR_+ := [0, \infty)$, where we are able to prove stronger results than in the current paper. In that case, there is no concept of oblique reflection; this reflection  can only be normal. 

\smallskip

We can prove the results of the current paper for a wider class of domains: {\it piecewise smooth domains}, under a restrictive condition that the reflected diffusion a.s. does not hit non-smooth parts of the boundary. There are some sufficient conditions for this, see \cite{MyOwn3, Williams1987}. However, this question is not well explored, and for this reason we state results only for smooth domains.

\subsection{Organization of the paper} Section 2 contains notation and definition of a reflected diffusion, as well as an existenceand uniqueness result from \cite{Saisho}. Section 3 contains the main results: Theorems~\ref{thm:1},~\ref{thm:2},~\ref{thm:3}. Sections 4-6 are devoted to the proofs of each of these three results. The Appendix contains some technical lemmata.

\section{Background} 

\subsection{Notation} For a vector or a matrix $A$, we denote its transpose by $A'$. We think of vectors from $\BR^d$ as column-vectors. The symbol $\BR^{d\times d}$ denotes the set of $d\times d$-matrices with real-valued entries. For two vectors $a = (a_1, \ldots, a_d)'$ and $b = (b_1, \ldots, b_d)'$ from $\BR^d$, we define their {\it dot product} by $a\cdot b = a_1b_1 + \ldots + a_db_d$, and the {\it Euclidean norm} of $a$ by $\norm{a} = \left[a_1^2 + \ldots + a_d^2\right]^{1/2}$. For $x \in \BR^d$ and $r > 0$, we denote the closed ball $B(x, r) = \{y \in \BR^d\mid \norm{x-y} \le r\}$. We denote by $\de_{ij}$ the Kronecker delta symbol, and by $e_i = (\de_{i1}, \ldots, \de_{id})'$ the $i$th standard unit vector in $\BR^d$, for $i = 1, \ldots, d$. For two subsets $E, F \subseteq \BR^d$, let 
$$
\dist(a, E) = \inf\limits_{x \in E}\norm{a - x}\ \ \mbox{and}\ \ \dist(E, F) = \inf\limits_{x \in E, y \in F}\norm{x - y}
$$
be the distance from $a$ to $E$ and the distance between $E$ and $F$, respectively. The interior and the closure of $E$ in the topology generated by Euclidean norm are denoted by $\Int E$ and $\ol{E}$, respectively. The complement $\BR^d\setminus E$ of $E$ is denoted by $E^c$.  The symbol $\mes(E)$ denotes the $d$-dimensional Lebesgue measure of the set $E$. The $d\times d$  identity matrix is denoted by $I_d$. We let $\BR_+ := [0, \infty)$. We denote weak convergence of probability measures and random variables by the symbol $\Ra$, and equality in law by the symbol $\stackrel{d}{=}$. The symbol $C([0, T], \BR^d)$ stands for the space of continuous functions $[0, T] \to \BR^d$. For $d = 1$, we simply write $C([0, T], \BR^d) = C[0, T]$. For a subset $E \subseteq \BR^d$, the symbol $C^r(E)$ stands for $r$ times continuously differentiable functions $E \to \BR$; this includes $r = \infty$. For the rest of the article, we define all random objects on a filtered probability space 
$$
(\Oa, \CF, (\CF_t)_{t \ge 0}, \MP),
$$
with the filtration $(\CF_t)_{t \ge 0}$ satisfying the usual conditions. We say that an $\BR^d$-valued process $X = (X(t), t \ge 0)$ is {\it continuous} if $\MP$-a.s. the trajectory $t \mapsto X(t)$ is continuous on $\BR_+$. For a function $f : E \to \BR$, where $E \subseteq \BR^d$, denote by $\nabla f$ the gradient of $f$, and by $D^2f$ the second derivative matrix of $f$. For an $\BR^d$-valued stochastic process $X = (X(t), t \ge 0)$ and a subset $E \subseteq \BR^d$, define the {\it exit time} of $X$ from $E$:
$$
\tau^X_E := \inf\{t \ge 0\mid X(t) \notin E\},\ \ \mbox{with}\ \ \inf\varnothing = \infty.
$$
We define the {\it process stopped at exiting} $E$ as follows:
\begin{equation}
\label{eq:stopped}
X^E = (X^E(t), t \ge 0),\ \ X^E(t) = X(t\wedge\tau^X_E).
\end{equation}
Define the {\it modulus of continuity} for a function $f : \BR_+ \to \BR^d$: for $T> 0$ and $\de > 0$, 
$$
\oa(f, [0, T], \de) := \sup\limits_{\substack{t, s \in [0, T]\\ |t - s| \le \de}}\norm{f(t) - f(s)}.
$$

\subsection{Definition of a reflected diffusion} Now we shall formally define the concepts of an obliquely reflected Brownian motion and an obliquely reflected diffusion, which were informally described in the Introduction. We use the notation from the Introduction, including $D, b, A, r$. For every $x \in \ol{D}$, take a nonsingular $d\times d$-matrix $\si(x)$ such that $A(x) \equiv \si(x)\si'(x)$. Take a $d$-dimensional Brownian motion $W = (W(t), t \ge 0)$. Fix a point $z_0 \in \ol{D}$. We impose the following assumption on the reflection field $r : \pa D \to \BR^d$.

\begin{asmp} The function $r : \pa D \to \BR^d$ is continuous, and $r(x)\cdot\fn(x) \equiv 1$ on $\pa D$. 
\label{asmp:r} 
\end{asmp}

\begin{defn} Consider a $\ol{D}$-valued continuous adapted process $Z = (Z(t), t \ge 0)$ and a real-valued continuous nondecreasing adapted process $\cl = (\cl(t), t \ge 0)$, with $\cl(0) = 0$, which can increase only when $Z(t) \in \pa D$, such that
\label{def:SDE-R-ps}
for $t \ge 0$, we have:
\begin{equation}
\label{eq:def-SRBM}
Z(t) = z_0 + \int_0^{t}b(Z(s))\,\md s + \int_0^t\si(Z(s))\,\md W(s) + \int_0^tr(Z(s))\,\md\cl(s).
\end{equation}
Then $Z$ is called a {\it reflected diffusion} in $\ol{D}$ with {\it drift vector field} $b$, {\it covariance matrix field} $A$, and {\it reflection field} $r$ on the boundary $\pa D$, {\it starting from} $z_0$.  
\end{defn}

Rewrite the stochastic equation~\eqref{eq:def-SRBM} governing the process $Z$, in the differential form:
\begin{equation}
\label{eq:most-general}
\md Z(t) = b(Z(t))\,\md t + \si(Z(t))\,\md W(t) + r(Z(t))\,\md\cl(t).
\end{equation}


As explained in the Introduction, usually in the literature they prove existence of such a process by presenting a sequence of non-reflected diffusions which converge to this reflected diffusions. This was the original use of a penalty method. In our paper, we are trying to find the most general conditions for applicability of the penalty method. Thus we assume that the reflected diffusion exists and is unique, at least in the weak sense. 

\begin{asmp} The process $Z$ in Definition~\ref{def:SDE-R-ps} exists in the weak sense: There exists a filtered probability space $(\Oa, \CF, (\CF_t)_{t \ge 0}, \mathbb P)$ with a $d$-dimensional $(\CF_t)_{t \ge 0}$-Brownian motion $W$, and with a process $Z$ satisfying Definition~\ref{def:SDE-R-ps}. In addition, this process is unique in law.
\label{asmp:existence}
\end{asmp}

Sufficient conditions for Assumption~\ref{asmp:existence} to hold can be found in papers \cite{DI, Fradon, LS1984, Saisho}. As an example, let us provide a sufficient condition (which is by no means necessary) for Assumption~\ref{asmp:existence} to hold, adapted from \cite[Theorem 6.11]{MenaldiBook}: $D$ is a $C^3$ simply connected domain with connected and oriented boundary $\pa D$, $r$ is $C^2$, $b$ and $\si$ are Lipschitz continuous. 

\section{Main Results} Back in the setting of Definition~\ref{def:SDE-R-ps}, let us impose the following assumptions.

\begin{asmp} There exists an open neighborhood $\CU$ of $\ol{D}$ such that the drift vector field $b$ and the matrix-valued function $\si$ can be extended to continuous functions on $\CU$. The initial condition $z_0$ lies away from the boundary $\pa D$: $z_0 \in D$.
\label{asmp:continuous}
\end{asmp}

Define the {\it signed distance} $\phi : \BR^d \to \BR$ to the boundary $\pa D$:
\begin{equation}
\label{eq:phi}
\phi(x) = \begin{cases}
\dist(x, \pa D),\ x \in D;\\
0,\ x \in \pa D;\\
-\dist(x, \pa D),\ x \notin \ol{D}.
\end{cases}
\end{equation}

The next technical lemma follows from the results of \cite{Foote, Krantz}. 

\begin{lemma}
\label{lemma:aux-ps} 
There exists an open neighborhood $\CU_0$ of $\pa D$ such that:

\smallskip

(a) the signed distance function $\phi$ is $C^2$ on $\CU_0$;

\smallskip

(b) for every $x \in \CU_0$, there exists a unique $y(x) \in \pa D$ such that $\dist(x, \pa D) = \norm{x - y(x)}$;

\smallskip

(c) the function $x \mapsto y(x)$ is continuous on $\CU_0$. 
\end{lemma}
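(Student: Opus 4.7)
The plan is to construct $\CU_0$ as the image of a tubular neighborhood under the normal map. Since $\pa D$ is a $C^2$ hypersurface, the inward unit normal field $\fn$ is $C^1$ on $\pa D$. Consider $F : \pa D \times \BR \to \BR^d$ defined by $F(y, t) = y + t\fn(y)$. At any point $(y_0, 0)$, the differential $DF$ is the identity on $T_{y_0}\pa D$ and sends $\pa_t$ to $\fn(y_0)$; since $\fn(y_0)$ is transverse to $T_{y_0}\pa D$, this differential is a linear isomorphism. The inverse function theorem then yields, for each $y_0 \in \pa D$, a relatively open neighborhood $V_{y_0} \subset \pa D$ and a radius $\rho(y_0) > 0$ such that $F$ restricts to a $C^1$ diffeomorphism from $V_{y_0} \times (-\rho(y_0), \rho(y_0))$ onto an open subset of $\BR^d$.

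I would then globalize by setting $\CU_0 := \bigcup_{y_0 \in \pa D} F(V_{y_0} \times (-\rho(y_0)/2, \rho(y_0)/2))$, shrinking the radii $\rho(\cdot)$ if necessary so that they are locally bounded above by a fraction of the reciprocal of the local principal curvatures of $\pa D$. For any $x \in \CU_0$, a nearest point $y \in \pa D$ must satisfy the first-order optimality relation $x - y \parallel \fn(y)$, so it is necessarily the first coordinate of some preimage of $x$ under $F$ within one of the tubular charts. Uniqueness of this nearest point, claim (b), then amounts to showing that competing preimages from different charts cannot yield distinct minimizers; this follows from a Federer-type reach estimate using the $C^2$ curvature bound on $\pa D$. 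Claim (c), continuity of $y(\cdot)$, is then immediate: on each chart $y(x)$ coincides with the first-coordinate projection of the continuous map $F^{-1}(x)$, and compatibility across charts is guaranteed by (b).

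For claim (a), one uses the identity $\nabla\phi(x) = \fn(y(x))$ valid on $\CU_0$, which follows by differentiating $\phi(x)^2 = \norm{x - y(x)}^2$ and invoking the optimality condition to cancel the terms in $\nabla y(x)$. This exhibits $\nabla\phi$ as the composition of the $C^1$ map $x \mapsto y(x)$ with the $C^1$ field $\fn$, so $\nabla\phi \in C^1(\CU_0)$, i.e.\ $\phi \in C^2(\CU_0)$. The main obstacle is claim (b): the local inverse function theorem only delivers local uniqueness of the nearest point, whereas (b) must also exclude a second, potentially distant, minimizer. Establishing a uniform reach estimate to rule this out is precisely the content of the results of Foote and Krantz invoked in the paper; once this reach bound is in hand, parts (a) and (c) follow in a largely formal way.
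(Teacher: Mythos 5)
The paper offers no proof of this lemma, only a citation to Foote (1984) and Krantz--Parks (1981); your sketch is essentially a faithful reconstruction of the tubular-neighborhood argument used there. The structure is right: the normal map $F(y,t)=y+t\fn(y)$ is a $C^1$ local diffeomorphism near $\pa D\times\{0\}$ (since $\fn\in C^1$ when $\pa D$ is $C^2$), the first-order optimality condition $x-y\parallel\fn(y)$ identifies any nearest point $y(x)$ as a component of a preimage of $x$ under $F$, and the identity $\nabla\phi = \fn\circ y$ upgrades $\phi$ to $C^2$.

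One step is phrased more loosely than it can bear. You assert that any nearest point $y$ of $x\in\CU_0$ furnishes a preimage $(y,\phi(x))$ lying \emph{within one of the tubular charts}, and you attribute uniqueness to a ``Federer-type reach estimate.'' For unbounded $\pa D$ the global reach can be zero, so that cannot be the mechanism; and with $\rho(\cdot)$ controlled only by local curvature it is not clear that $|\phi(x)|<\rho(y(x))$ when the bound on $|\phi(x)|$ came from a chart based at a \emph{different} point $y_0$. The clean way to close this is purely local and needs no reach estimate: for each $y_0\in\pa D$ pick $r_0>0$ so that $\pa D\cap B(y_0,3r_0)$ is a single $C^2$ graph, pick $\rho_0\le r_0$ so that $F$ restricts to a diffeomorphism of $(\pa D\cap B(y_0,3r_0))\times(-\rho_0,\rho_0)$ onto its image, and set $W_{y_0}:=B(y_0,\rho_0/3)$. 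For $x\in W_{y_0}$ the triangle inequality ($\norm{x-y}\le\norm{x-y_0}<\rho_0/3$ for any nearest point $y$, hence $\norm{y-y_0}<2\rho_0/3<3r_0$ and $|\phi(x)|<\rho_0$) forces $(y,\phi(x))$ into the domain where $F$ is injective, giving (b); then (a) and (c) follow exactly as you describe, and $\CU_0:=\bigcup_{y_0\in\pa D}W_{y_0}$ is the required neighborhood.
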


Without loss of generality, we can assume $\CU_0 \subseteq \CU$. 

\smallskip

Define a sequence of SDEs whose solutions weakly approximate this reflected diffusion $Z$. For each $n = 1, 2, \ldots$ take a drift vector field $f_n : \CU \to \BR^d$ and a matrix field $\si_n : \CU \to \BR^{d\times d}$ such that for all $n$ and $x$, the matrix $\si_n(x)$ is nonsingular. Next, take an initial condition $z_n \in \CU$. Consider a standard $d$-dimensional Brownian motion $W_n = (W_n(t), t \ge 0)$. Define an $\BR^d$-valued continuous adapted process $X_n = (X_n(t), t \ge 0)$ such that (a) $X_n(0) = z_n$; (b) $X_n$ is stopped when it exits $\CU$; (c) as long as this process is in $\CU$, it satisfies the SDE:
$$
\md X_n(t) = \left[f_n(X_n(t)) + b(X_n(t))\right]\md t + \si_n(X_n(t))\md W_n(t).
$$

\begin{asmp} For each $n$, the process $X_n$ exists in the weak sense and is unique in law. 
\label{asmp:existence-n}
\end{asmp} 

\begin{rmk} 
If we can take $\CU = \BR^d$, then we do not need to stop the diffusion $X_n$. We provide the statement in this form for the sake of generality. 
\end{rmk}

Now, we state conditions on the sequence $(X_n)_{n \ge 1}$ of diffusions to weakly approximate the reflected diffusion $Z$. The first condition, stated as a definition, stipulates that as $n \to \infty$, the processes $X_n$ are less and less likely to stray far away from $D$. Recall the definition of the stopped process $X^E$ from~\eqref{eq:stopped} for a subset $E \subseteq \BR^d$. 

\begin{defn} We say that the sequence $(X_n)_{n \ge 1}$ is {\it asymptotically in} $\ol{D}$ if for every $T > 0$ and $\eta > 0$, and for every compact subset $\CK \subseteq \CU$, we have: 
\begin{equation}
\label{eq:asymp-X-n}
\lim\limits_{n \to \infty}\MP\Bigl(\min\limits_{0 \le t \le T}\phi\left(X_n^{\CK}(t)\right) > -\eta\Bigr) = 1.
\end{equation}
\label{def:asymp}
\end{defn}

The next definition states that for a large $n$, the drift vector field $f_n$ emulates correct direction of reflection at the boundary. More precisely, we need 
$$
\frac{f_n(x)}{\norm{f_n(x)}} \approx \frac{r(y)}{\norm{r(y)}}\ \ \mbox{for}\ \ x \approx y \in \pa D\ \ \mbox{and large}\ \ n.
$$
In fact, we can weaken this condition by considering only the values of $x$ such that $\norm{f_n(x)} \ge \eps$, where $\eps > 0$ is any fixed number. For every subset $E \subseteq \BR^d$ and a $\de > 0$, define 
\begin{equation}
\label{eq:K-de}
E(\de) := \{x \in E\mid \dist(x, \pa D) < \de\} = \{x \in E\mid -\de < \phi(x) < \de\};
\end{equation}
\begin{equation}
\label{eq:tilde-E}
\tilde{E}(\de) := \{x \in E\mid \phi(x) > -\de\}.
\end{equation}


\begin{defn} The sequence $(f_n)_{n \ge 1}$ of drift vector fields is said to {\it emulate the reflection field} $r$ if for every compact subset $\CK \subseteq \CU_0$ and every $\eps > 0$, we have:
$$
\lim\limits_{\de \to 0}\varlimsup\limits_{n \to \infty}\sup\limits_{\substack{x \in \CK(\de)\\ \norm{f_n(x)} \ge \eps}}\norm{\frac{f_n(x)}{\norm{f_n(x)}} - \frac{r(y(x))}{\norm{r(y(x))}}} = 0.
$$
\label{def:direction}
\end{defn}

\begin{rmk} By Lemma~\ref{lemma:aux} below, for small enough $\de$, $y(x)$ is well-defined for $x \in \CK(\de)$. 
\end{rmk}

Let us state the first of the three main results of this paper.  

\begin{thm}
\label{thm:1}
Under Assumptions 1, 2, 3, 4, suppose that:

\smallskip

(a) the sequence $(X_n)_{n \ge 1}$ of stochastic processes is asymptotically in $\ol{D}$;

\smallskip

(b) the sequence $(f_n)_{n \ge 1}$ of drift vector fields emulates the reflection field $r$;

\smallskip

(c) as $n \to \infty$, locally uniformly in $D$, we have: $f_n \to 0$;

\smallskip

(d) as $n \to \infty$, locally uniformly in $\CU$, we have: $\si_n \to \si$;

\smallskip

(e) as $n \to \infty$, we have: $z_n \to z_0$.

\smallskip

Then, as $n \to \infty$, we have: $X_n \Ra Z$ in $C([0, T], \BR^d)$ for every $T > 0$.
\end{thm}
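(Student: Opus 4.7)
The plan is to execute the classical three-step scheme for weak convergence of SDE solutions: (i) establish tightness of $(X_n)_{n \ge 1}$ in $C([0,T],\BR^d)$, (ii) identify every subsequential weak limit as a weak solution of the reflected equation~\eqref{eq:def-SRBM}, and (iii) invoke Assumption 3 (uniqueness in law) to upgrade subsequential convergence to $X_n \Ra Z$. It is natural to introduce the auxiliary processes
\begin{equation*}
L_n(t) := \int_0^{t}\norm{f_n(X_n(s))}\,\md s, \qquad R_n(t) := \int_0^{t} f_n(X_n(s))\,\md s,
\end{equation*}
which are the candidates for the local time $\cl(t)$ and the reflection integral $\int_0^{t} r(Z(s))\,\md\cl(s)$, respectively. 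By localization on the stopped process $X_n^\CK$ for a fixed compact $\CK \subseteq \CU$ containing $\ol{D}$ in its interior, one may assume all trajectories stay in $\CK$.

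The first and most technical task is the a priori bound $\sup_n \MP(L_n(T) > M) \to 0$ as $M \to \infty$. I construct a bounded $C^2$ test function $g : \BR^d \to \BR$ that coincides with the signed distance $\phi$ on some tubular neighborhood of $\pa D$ contained in $\CU_0$. Since $\nabla\phi = \fn\circ y$ on $\CU_0$, $r\cdot\fn\equiv 1$ on $\pa D$, and Definition~\ref{def:direction} forces $f_n(x)$ to align with $r(y(x))$ up to arbitrarily small angle for large $n$, a short computation gives $\nabla g(x)\cdot f_n(x) \ge c\norm{f_n(x)}$ for some constant $c > 0$ depending only on $\CK$ and $r$, whenever $x$ is close enough to $\pa D$ and $n$ is large. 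Applying Itô's formula to $g(X_n)$, using condition (d) and Assumption 2 to bound the martingale and $D^2 g$-drift parts in $L^2$ uniformly in $n$, using condition (c) to make the contribution of $f_n$ away from $\pa D$ negligible, and using condition (a) to rule out excursions with very negative $\phi$, one finds that $g(X_n(t))-g(z_n)$ is uniformly bounded, while the right-hand side of Itô's formula contains $c L_n(t)$ (restricted to a neighborhood of the boundary) as a dominant term; rearrangement yields the moment bound.

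With this bound in hand, tightness of $(X_n, R_n, L_n)$ in $C([0,T],\BR^d\times\BR^d\times\BR)$ follows from standard modulus estimates: the martingale part of $X_n$ has predictable quadratic variation controlled by condition (d) and boundedness of $\si$ on $\CK$, while the bounded-variation parts $R_n$ and $\int_0^\cdot b(X_n(s))\,\md s$ have moduli controlled by $\oa(L_n,[0,T],\cdot)$ and boundedness of $b$. Passing to a subsequence and invoking Skorokhod's representation theorem, I may assume $(X_n, R_n, L_n)\to(X_\infty, R_\infty, L_\infty)$ a.s.\ uniformly on $[0, T]$. Condition (a) forces $X_\infty$ to take values in $\ol{D}$; condition (c) together with uniform convergence shows that the measure $\md L_\infty$ is supported on $\{s : X_\infty(s)\in\pa D\}$, since on any compact subset of $D$ one has $f_n\to 0$ uniformly.

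The most delicate step, which I expect to be the main obstacle, is identifying $R_\infty$ as $\int_0^\cdot r(X_\infty(s))\,\md\cl(s)$ for an appropriate nondecreasing $\cl$. For small $\de > 0$, I split $R_n(t)$ into its integral over $\{s : \phi(X_n(s)) < \de\}$ and its complement; the latter vanishes uniformly as $n \to \infty$ by condition (c), while for the former, Definition~\ref{def:direction} allows one to write
\begin{equation*}
R_n(t) = \int_0^{t} \frac{r(y(X_n(s)))}{\norm{r(y(X_n(s)))}}\, \mathbf{1}_{\{\phi(X_n(s)) < \de\}}\,\md L_n(s) + \mathrm{err}_{n,\de}(t),
\end{equation*}
where $\sup_{t \le T}\norm{\mathrm{err}_{n,\de}(t)} \to 0$ as first $n \to \infty$ and then $\de \to 0$. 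Passing to the limit using continuity of $r$, of $y(\cdot)$, and of $X_\infty$, one obtains $R_\infty(t) = \int_0^t r(X_\infty(s))/\norm{r(X_\infty(s))}\,\md L_\infty(s)$. The stochastic integral term converges by condition (d) and standard continuous-mapping arguments for Itô integrals, so $X_\infty$ satisfies~\eqref{eq:def-SRBM} with local time $\cl(t) := \int_0^t \norm{r(X_\infty(s))}^{-1}\,\md L_\infty(s)$. Assumption 3 then forces $X_\infty \stackrel{d}{=} Z$, and since every subsequence has the same limit, $X_n \Ra Z$.
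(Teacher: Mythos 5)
Your plan — introduce the integrated drift norm $L_n$ and the vector integral $R_n$ (the paper's $l_n$ and $L_n$, respectively), prove joint tightness, pass to an a.s.\ convergent Skorokhod representation, identify the limit as a weak solution of~\eqref{eq:def-SRBM}, and then invoke Assumption~3 — is the same strategy the paper follows, down to the choice of test function: a bounded $C^2$ extension of the signed distance $\phi$ near $\pa D$ (the paper's $\psi\circ\phi$), the key lower bound $\nabla g(x)\cdot f_n(x)\ge c\norm{f_n(x)}$ near $\pa D$ for large $n$ (the paper's Lemma~\ref{lemma:prop-alpha}), and the identification of $R_\infty$ using $r(y(\cdot))/\norm{r(y(\cdot))}$ (the paper's Lemma~\ref{lemma:L-l}). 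So this is not a different route.

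There is, however, a genuine gap at the tightness step. You establish only the endpoint bound $\sup_n\MP(L_n(T)>M)\to 0$ and then assert that tightness of $(X_n,R_n,L_n)$ in $C([0,T],\BR^d\times\BR^d\times\BR)$ ``follows from standard modulus estimates,'' saying the modulus of $R_n$ is ``controlled by $\oa(L_n,[0,T],\cdot)$.'' That is circular: you never bound $\oa(L_n,[0,T],\de)$ itself. An endpoint bound on a nondecreasing continuous process does not yield tightness in $C[0,T]$ — a sequence of nondecreasing processes with $L_n(T)$ uniformly bounded can still concentrate all its increase on intervals of vanishing length and converge to a discontinuous limit. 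This is precisely where the paper works hardest and where your argument must supply more: Lemma~\ref{lemma:phi-X-n-tight} first proves tightness of $\phi(X_n^{\CO}(\cdot))$ in $C[0,T]$ by a careful argument combining condition~(a) (to exclude deep sub-boundary excursions), condition~(c) (uniform decay of $f_n$ on $\{\phi\ge\eps/4\}$), the intermediate value theorem, and the previously established tightness of $V_n$ (Lemma~\ref{lemma:V-n-tight}); only afterwards does the paper push this through It\^o's formula, the one-sided bound $\al_n\ge c_1\norm{f_n}-c_2$, and the sandwich Lemma~\ref{lemma:three-processes-tight} to obtain tightness of $l_n$ in $C[0,T]$ (Lemmata~\ref{lemma:int-a-n-tight}, \ref{lemma:l-n-stopped-tight}, \ref{lemma:l-n-tight}). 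Your sketch omits this chain entirely, and without it the argument does not close.

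A secondary issue: your localization posits ``a fixed compact $\CK\subseteq\CU$ containing $\ol{D}$ in its interior,'' which presupposes that $\ol{D}$ is bounded — an assumption nowhere made in the paper. The paper instead stops $X_n$ at the exit time of an arbitrary compact $\CK\subseteq\CU$, proves the statement for the stopped processes (so only $\ol{Z}^{\CO}\stackrel{d}{=}Z^{\CO}$ for $\CO=\Int\CK$), and then removes the stopping via an exhausting sequence of compacts and a diagonal argument (Lemma~\ref{lemma:convergence-final}). If $D$ is unbounded, your plan must be modified along the same lines.
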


Let us provide a sufficient condition for the sequence $(X_n)_{n \ge 1}$ to be asymptotically in $\ol{D}$. To this end, the magnitude $\norm{f_n(x)}$ should be large enough to keep the process $X_n$ inside $\ol{D}$. For every subset $E \subseteq \CU$, every $n = 1, 2, \ldots$ and $s \in \BR$, define (with 
$\inf\varnothing := \infty$). 

\begin{equation}
\label{eq:m-k-n}
\cm_{E, n}(s) := \inf\limits_{\substack{x \in E\\ \phi(x) = s}}\norm{f_n(x)}.
\end{equation}


\begin{defn} A sequence $(g_n)_{n \ge 1}$ of functions $g_n : \BR \to \BR_+$ {\it has a spike at zero} if there exists an $\eps_0 > 0$ such that for every $\eps \in (0, \eps_0]$, we have: 
$$
\lim\limits_{n \to \infty}\int_{-\eps}^{\eps}g_n(x)\,\md x = \infty.
$$
If, in addition, $g_n \to 0$ uniformly on any $[a, b] \subseteq (0, \infty)$, then the sequence $(g_n)_{n \ge 1}$ is {\it singular}. 
\end{defn}

\begin{exm} Take a function $h: \BR \to \BR_+$ with $h(x) = 0$ for $x \ge c > 0$, and with $\int_{\BR}h(x)\md x > 0$. Consider two sequences $(a_n)_{n \ge 1}$ and $(c_n)_{n \ge 1}$ of positive real numbers, which satisfy:
$$
\lim\limits_{n \to \infty}a_n = \infty,\ \ \lim\limits_{n \to \infty}c_n = \infty,\ \ \lim\limits_{n \to \infty}a_n/c_n = \infty.
$$
It is an easy exercise to check that the following sequence of functions is singular:
\begin{equation}
\label{eq:g-n-exm}
g_n(x) = a_nh(c_nx),\ \ n = 1, 2, \ldots
\end{equation}
Examples: $h(x) = 1_{[0, 1]}(x)$, $h(x) = 1_{[-1, 0]}(x)$, are taken from the companion paper \cite{MyOwn7}. 
\end{exm}

\begin{exm} We can also weaken the assumptions on $h$ from Example 1:  Assume $h$ is nonincreasing on $[c_*, \infty)$ for some $c_* > 0$, and 
$$
\int_{-\infty}^{\infty}h(y)\,\md y < \infty,\ \frac{a_n}{c_n} \to \infty,\ a_nh(c_n\delta) \to 0\ \mbox{for all}\ \delta > 0.
$$
Then it is straightforward to prove that $(g_n)_{n \ge 1}$ from~\eqref{eq:g-n-exm} is, in fact, a singular sequence. An example: 
$$
h(x) = 1_{[0, \infty)}(x)e^{-x},\ a_n := n^2,\ c_n := n;\ \mbox{then}\ g_n(x) = n^2e^{-nx}.
$$
\end{exm}

The following theorem is the second main result of this paper. 

\begin{thm} Under Assumptions 1, 2, 3, 4, and conditions (b), (c), (d), (e) of Theorem~\ref{thm:1}, suppose that for every compact subset $\CK \subseteq \CU$, the sequence $(\cm_{\CK, n})_{n \ge 1}$ has a spike at zero. Then the sequence $(X_n)_{n \ge 1}$ is asymptotically in $\ol{D}$, and therefore $X_n \Ra Z$ in $C([0, T], \BR^d)$ for every $T > 0$.
\label{thm:2}
\end{thm}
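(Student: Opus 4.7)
The plan is to prove that $(X_n)_{n\ge 1}$ is asymptotically in $\ol D$; the convergence $X_n\Ra Z$ then follows directly from Theorem~\ref{thm:1}. Fix $T>0$, $\eta>0$, and a compact $\CK\subseteq\CU$; by shrinking $\eta$ I may assume $\eta<\eps_0$ and $\{x:|\phi(x)|\le 2\eta\}\subseteq\CU_0$, so that $\phi$ is $C^2$ on the relevant strip. The goal is to show $\MP(\tau_n\le T)\to 0$, where $\tau_n:=\inf\{t\ge 0:\phi(X_n^{\CK}(t))\le -\eta\}$.

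The first step extracts a pointwise lower bound on the drift of $\phi(X_n)$ in the boundary layer. Because $\nabla\phi|_{\pa D}=\fn$ and $r\cdot\fn\equiv 1$, continuity of $r$ and $\fn$ yields a constant $c_1>0$ with $\nabla\phi(x)\cdot r(y(x))/\|r(y(x))\|\ge c_1$ for $x\in\CK\cap\{|\phi|\le 2\eta\}$. Combined with Definition~\ref{def:direction}, this gives $\nabla\phi(x)\cdot f_n(x)\ge (c_1/2)\|f_n(x)\|-o(1)$ uniformly on that strip as $n\to\infty$. Since $b$, $\si_n$, $\nabla\phi$ and $D^2\phi$ are uniformly bounded there, the infinitesimal drift of $\phi(X_n)$,
\[
A_n(x):=\nabla\phi(x)\cdot[f_n(x)+b(x)]+\tfrac12\tr\bigl(\si_n\si_n'\, D^2\phi\bigr)(x),
\]
satisfies $A_n(x)\ge\underline\mu_n(\phi(x))$ with $\underline\mu_n(s):=(c_1/2)\,\cm_{\CK,n}(s)-C_2$, while $\|\si_n'\nabla\phi(x)\|^2\in[c_0,C_3]$ for constants independent of $n$.

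Next, I would build a one-dimensional Lyapunov function $V_n:\BR\to[0,\infty)$, $C^2$, nonincreasing, vanishing on $[\eps_0,\infty)$, such that $V_n(\phi(X_n^{\CK}(t\wedge\tau_n)))$ is a nonnegative supermartingale. Writing $V_n(s)=\int_s^{\eps_0}\rho_n(u)\,du$ with $\rho_n\ge 0$, the supermartingale condition becomes the pointwise inequality $-\rho_n(\phi(x))\,A_n(x)+\tfrac12 V_n''(\phi(x))\,\|\si_n'\nabla\phi(x)\|^2\le 0$. Using the previous step's bounds it suffices to take $\rho_n$ solving the piecewise scale-function ODE $\rho_n'/\rho_n=-2\underline\mu_n/c_0$ on $\{\underline\mu_n\le 0\}$ (making $V_n$ concave, so the lower bound $c_0$ on the diffusion coefficient is the relevant one) and $\rho_n'/\rho_n=-2\underline\mu_n/C_3$ on the spike region $\{\underline\mu_n>0\}$ (making $V_n$ convex, with upper bound $C_3$ relevant), patched $C^1$ at the junction and mollified slightly to reach $C^2$. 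Itô's formula and Doob's maximal inequality then yield
\[
\MP(\tau_n\le T)\le\frac{V_n(\phi(z_n))}{V_n(-\eta)}.
\]

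The spike hypothesis closes the argument. Normalizing $\rho_n(-\eta)=1$ and using condition (c) of Theorem~\ref{thm:1} ($\cm_{\CK,n}\to 0$ locally uniformly on $(0,\eps_0)$), the ODE makes $\rho_n$ decay across a neighborhood $(-\delta,\delta)$ of $0$ by the factor $\exp\bigl(-(2c_1/C_3)\int_{-\delta}^{\delta}\cm_{\CK,n}(u)\,du\bigr)\to 0$, after which $\rho_n$ grows by at most a constant on $(\delta,\eps_0)$. Hence $V_n(-\eta)$ is bounded below by a positive constant coming from the contribution near $s=-\eta$, while $V_n(\phi(z_n))=\int_{\phi(z_n)}^{\eps_0}\rho_n\to 0$ because $\phi(z_n)\to\phi(z_0)>0$ eventually lies past the spike (choose $\delta<\phi(z_0)/2$). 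The ratio vanishes. The main obstacle is the Lyapunov step: because the effective diffusion coefficient $\|\si_n'\nabla\phi\|^2$ is genuinely variable and the sign of $V_n''$ must flip between the two regimes, the scale-function ODE has to be assembled piecewise and matched with enough smoothness to validate the supermartingale property in both regimes simultaneously; everything else (the pointwise bound, the Itô/Doob step, and the spike estimate) is routine once $V_n$ is in hand.
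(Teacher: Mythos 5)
The proposal takes a genuinely different route from the paper. The paper's proof of Theorem~\ref{thm:2} goes through Lemma~\ref{lemma:master}, whose proof is a time-change (to make the martingale part Brownian) followed by a pathwise comparison with a one-dimensional SDE, a scale-function bound on the probability of one bad excursion (Lemma~\ref{lemma:aux-important}), and an excursion-counting argument to control the time horizon $T$ (Lemmas~\ref{lemma:1991},~\ref{lemma:comp-of-probabilities},~\ref{lemma:pjp1}). You collapse all of that into a single Lyapunov supermartingale $V_n(Y_n)$ with $Y_n = \psi(\phi(X_n))$ and a single Doob estimate, which, if it worked, would indeed be much shorter. It does not work, and the place where it fails is exactly the step you label ``the main obstacle'' and treat as a smoothness-matching exercise.

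For $V_n(Y_n(t\wedge\tau_n))$ to be a nonnegative supermartingale, the inequality $\alpha_n V_n' + \tfrac12\beta_n^2 V_n'' \le 0$ must hold at every state $x\in\CO$ with $Y_n>-\eta$ that the process can occupy, and in particular on the whole outer band $\eps_0 \le \phi(x) < \de_{\CK}$, since the diffusion freely crosses $\eps_0$ and comes back; you cannot stop it there, because the whole point of the statement is a bound valid over all of $[0,T]$. On that band $\beta_n^2(x) = (\psi'\circ\phi)^2(\nabla\phi)'A_n\nabla\phi$ while $\alpha_n(x)$ contains the term $\tfrac12\psi''(\phi)(\nabla\phi)'A_n\nabla\phi$ from $D^2(\psi\circ\phi)$. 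Since $\psi$ flattens smoothly at $\de_{\CK}$ (so $\psi'\to 0$, $\psi''<0$ there), one gets $\alpha_n/\beta_n^2 \sim \tfrac12\psi''/(\psi')^2 \to -\infty$: the drift-to-diffusion ratio of $Y_n$ diverges to $-\infty$. With $\rho_n := -V_n' \ge 0$, the supermartingale inequality becomes $\rho_n'/\rho_n \ge 2|\alpha_n|/\beta_n^2$, which must therefore blow up on a left neighborhood of $\de_{\CK}$. That is impossible for any admissible $\rho_n$: if $\rho_n > 0$ there, integrating $(\log\rho_n)'\ge c$ makes $\rho_n$ unbounded; and if instead you try to cut $V_n$ off to be constant (so $\rho_n\to 0$ at the junction, $C^1$), then near the junction $\rho_n$ is decreasing and $\rho_n'/\rho_n\to -\infty$, the wrong sign. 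The same sign problem reappears at any interior cutoff $\eps_0$ whenever $\alpha_n$ can be negative there, which the paper's hypotheses do not exclude. So the Lyapunov function you describe does not exist, and mollification cannot manufacture it.

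This obstruction is exactly why Lemma~\ref{lemma:master}'s proof does not use a supermartingale. The paper handles the interior band by a different mechanism: Lemma~\ref{lemma:1991} shows each up-excursion from $\eps_1$ to $\eps_0$ takes at least a fixed time $c>0$, using only an \emph{upper} bound on the drift there (condition (g)), not a lower bound; Lemmas~\ref{lemma:comp-of-probabilities} and~\ref{lemma:pjp1} then stitch together at most $O(T/c)$ applications of the per-excursion scale-function bound of Lemma~\ref{lemma:aux-important}. That excursion count is not routine scaffolding — it is the device that replaces the nonexistent Lyapunov function.

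A smaller issue: ``$V_n(-\eta)$ is bounded below by a positive constant'' is not justified. The spike-at-zero hypothesis controls $\int_{-\eps}^{\eps}\cm_{\CK,n}$ for small $\eps$, and condition (c) controls $\cm_{\CK,n}$ only for $s>\de>0$; nothing prevents $\cm_{\CK,n}$ from being huge on $(-\eta,-\de)$ for every $n$, in which case $\rho_n$ collapses immediately past $-\eta$ and $V_n(-\eta)\to 0$. The ratio $V_n(\phi(z_n))/V_n(-\eta)$ would in fact still tend to $0$ (because the relative decay of $\rho_n$ from $-\eta/2$ to $\phi(z_n)$ contains the spike and dominates), but the correct reasoning is this relative decay, not an absolute lower bound. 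The paper's Lemma~\ref{lemma:b-n}, choosing the reference point $b_n\to 0$, is precisely the bookkeeping device that makes the analogous scale-function estimate clean.
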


Finally, let us provide an example of such sequence. Take a sequence $(g_n)_{n \ge 1}$ of functions $\BR \to \BR_+$.  Define the following sequence $(f_n)_{n \ge 1}$:
\begin{equation}
\label{eq:f-n-example}
f_n : \mathcal U \to \BR^d,\ \  f_n(x) :=
\begin{cases}
g_n(\phi(x))r(y(x)),\ x \in \CU_0;\\
0,\ x \in \CU\setminus\CU_0.
\end{cases}
\end{equation}


\begin{thm} Assume the sequence $(g_{n})_{n \ge 1}$, $i = 1, \ldots, m$, is singular. Under Assumptions 1, 2, 3, 4, and conditions (d), (e) from Theorem~\ref{thm:1}, as $n \to \infty$, 
$$
X_n \Ra Z\ \mbox{in}\ C([0, T], \BR^d)\ \mbox{for every}\ T > 0.
$$
\label{thm:3}
\end{thm}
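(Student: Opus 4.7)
My plan is to reduce Theorem~\ref{thm:3} to Theorem~\ref{thm:2} by verifying, for the penalty sequence $(f_n)_{n \ge 1}$ of~\eqref{eq:f-n-example}, the hypotheses not already assumed: condition (b) that $f_n$ emulates $r$, condition (c) that $f_n \to 0$ locally uniformly in $D$, and the spike-at-zero condition on $(\cm_{\CK, n})_{n \ge 1}$ for every compact $\CK \subseteq \CU$. Assumptions 1--4 and conditions (d), (e) are already in force by hypothesis.

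For (c), fix a compact $K \subseteq D$ and set $\de_0 := \min_{x \in K}\phi(x) > 0$. On $K \cap \CU_0$ one has $\norm{f_n(x)} = g_n(\phi(x))\,\norm{r(y(x))}$, where $\norm{r(y(\cdot))}$ is bounded on $K$ by continuity of $r$ and $y$; singularity forces $g_n \to 0$ uniformly on $[\de_0, \max_K \phi]$. On $K \setminus \CU_0$, $f_n \equiv 0$. Hence $f_n \to 0$ uniformly on $K$. Condition (b) is essentially trivial: whenever $\norm{f_n(x)} > 0$ we must have $x \in \CU_0$ and $g_n(\phi(x)) > 0$, so $f_n(x)$ is a strictly positive scalar multiple of $r(y(x))$; consequently $f_n(x)/\norm{f_n(x)}$ equals $r(y(x))/\norm{r(y(x))}$ exactly, making the supremum in Definition~\ref{def:direction} identically zero for every $\de$.

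For the spike condition, fix a compact $\CK \subseteq \CU$. The set $F := \CK \setminus \CU_0$ is a closed subset of $\CK$, hence compact, and is disjoint from $\pa D$ since $\pa D \subseteq \CU_0$. Because $\pa D$ is closed, $\de_1 := \dist(F, \pa D) > 0$; in particular, every $x \in \CK$ with $|\phi(x)| < \de_1$ lies in $\CU_0$. Assumption 1, together with $\norm{\fn(y)} = 1$, forces $\norm{r(y)} \ge 1$ on $\pa D$ by Cauchy--Schwarz, so for $|s| < \de_1$ and any $x \in \CK$ with $\phi(x) = s$, $\norm{f_n(x)} = g_n(s)\norm{r(y(x))} \ge g_n(s)$. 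Thus $\cm_{\CK,n}(s) \ge g_n(s)$ on $(-\de_1, \de_1)$. Singularity of $(g_n)$ then yields $\int_{-\eps}^{\eps}\cm_{\CK,n}(s)\,\md s \to \infty$ for every $\eps \in (0, \de_1)$, i.e.\ the spike at zero. Applying Theorem~\ref{thm:2} delivers $X_n \Ra Z$ in $C([0, T], \BR^d)$.

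The substantive analysis of the paper lives in Theorems~\ref{thm:1} and~\ref{thm:2}; Theorem~\ref{thm:3} is an application of the template~\eqref{eq:f-n-example} engineered so that those hypotheses are verifiable by inspection. The only real subtlety is that points of $\CK$ lying outside $\CU_0$ (where $f_n$ vanishes by construction) must be excluded from the infimum defining $\cm_{\CK,n}(s)$ for small $|s|$; this is handled cleanly by the compactness argument giving $\de_1 = \dist(\CK\setminus\CU_0, \pa D) > 0$.
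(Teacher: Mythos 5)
Your proposal is correct and follows essentially the same route as the paper: reduce Theorem~\ref{thm:3} to Theorem~\ref{thm:2} by verifying that for the template~\eqref{eq:f-n-example} the emulation condition (b) holds exactly (since $f_n(x)/\norm{f_n(x)} = r(y(x))/\norm{r(y(x))}$ whenever $f_n(x)\neq 0$), condition (c) follows from $g_n \to 0$ locally uniformly on $(0,\infty)$ together with boundedness of $\norm{r(y(\cdot))}$ on the relevant compact set, and the spike condition follows from $\norm{r(z)} \ge r(z)\cdot\fn(z) = 1$ on $\pa D$ (Cauchy--Schwarz), giving $\cm_{\CK,n}(s) \ge g_n(s)$ for $|s|$ small. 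The one cosmetic difference is that you re-derive the fact that $\CK(\de)\subseteq\CU_0$ for small $\de$ via $\de_1 := \dist(\CK\setminus\CU_0,\pa D) > 0$, whereas the paper simply invokes its Lemma~\ref{lemma:aux}, which already records this; both give the same $\de_{\CK}$-type bound, so this is a restatement rather than a new idea.
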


\section{Proof of Theorem~\ref{thm:1}} 

\subsection{Overview of the proof} We split the proof into a sequence of lemmata. In the following subsections, we prove each of these lemmata. Take a compact subset $\CK \subseteq \CU$. Let $\CO := \Int\CK$. 

\begin{lemma} There exists a $\de_{\CK} > 0$ such that $\CK(\de_{\CK}) \subseteq \CU_0$. 
\label{lemma:aux}
\end{lemma}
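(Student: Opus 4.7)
The plan is to argue by compactness. The key point is that $\CU_0$ is an open neighborhood of $\partial D$, so its complement $\CU_0^c$ is a closed set disjoint from $\partial D$. The set $\CK \setminus \CU_0 = \CK \cap \CU_0^c$ is therefore the intersection of a compact set with a closed set, hence compact, and it is disjoint from the closed set $\partial D$.

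First I would handle the trivial case: if $\CK \setminus \CU_0 = \varnothing$, then $\CK \subseteq \CU_0$ and any choice of $\delta_{\CK} > 0$ works, since $\CK(\delta_{\CK}) \subseteq \CK \subseteq \CU_0$.

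Otherwise, I would invoke the standard fact that two disjoint closed sets, at least one of which is compact, have strictly positive distance. Applied to $\CK \setminus \CU_0$ and $\partial D$, this gives
\[
\delta_{\CK} := \dist(\CK \setminus \CU_0,\, \pa D) > 0.
\]
Finally, pick any $x \in \CK(\delta_{\CK})$. By definition of $\CK(\delta_{\CK})$ from~\eqref{eq:K-de}, we have $\dist(x, \pa D) < \delta_{\CK}$, so $x$ cannot lie in $\CK \setminus \CU_0$, whence $x \in \CU_0$. This gives the desired inclusion $\CK(\delta_{\CK}) \subseteq \CU_0$.

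There is no real obstacle here; the only thing to note is that compactness of $\CK$ (inherited by $\CK \setminus \CU_0$) is essential to conclude the distance is strictly positive — on a non-compact set one could only guarantee a nonnegative infimum.
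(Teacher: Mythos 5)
Your argument is correct and is essentially the same compactness argument as the paper's, just phrased directly instead of by contradiction: the paper assumes $\CK(\delta)\not\subseteq\CU_0$ for every $\delta$, extracts a sequence $x_n\in\CK\setminus\CU_0$ with $\dist(x_n,\pa D)\to 0$, passes to a convergent subsequence, and derives a limit point in $\pa D\setminus\CU_0$; you package the same subsequence extraction into the standard fact that a compact set and a disjoint closed set are a positive distance apart, and then take $\de_{\CK}=\dist(\CK\setminus\CU_0,\pa D)$. Both rest on compactness of $\CK$, openness of $\CU_0$, and $\pa D\subseteq\CU_0$, and the direct formulation is arguably slightly cleaner since it exhibits $\de_{\CK}$ explicitly.
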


Fix time horizon $T > 0$, and an $n = 1, 2, \ldots$ For $t \ge 0$, define
$$
V_n(t) := z_n + \int_0^{t}b(X_n(s))\,\md s + \int_0^t\si_n(X_n(s))\,\md W_n(s),
$$
\begin{equation}
\label{eq:L-l-n}
L_n(t) := \int_0^tf_n(X_n(s))\,\md s,\ \ l_n(t) := \int_0^t\norm{f_n(X_n(s))}\,\!\md s. 
\end{equation}
Then we can represent the process $X_n$ as follows: 
\begin{equation}
\label{eq:X-n-sum}
X_n(t) = V_n(t) + L_n(t),\ \ t \ge 0.
\end{equation}
Consider the stopped process $X_n^{\CO}$, defined in~\eqref{eq:stopped}. 

\begin{lemma} The sequence $\left(V_n\left(\cdot\wedge\tau_{\CO}^{X_n}\right)\right)_{n \ge 1}$ is tight in $C([0, T], \BR^d)$. 
\label{lemma:V-n-tight}
\end{lemma}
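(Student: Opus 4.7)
The plan is to decompose $V_n$ into four pieces and handle each separately. Writing $L_n(t) = \int_0^t f_n(X_n(s))\,\md s$ and substituting, I would split
\[
V_n\bigl(t\wedge\tau_{\CO}^{X_n}\bigr) = z_n \;+\; B_n(t)\;+\;M_n(t)\;+\;L_n\bigl(t\wedge\tau_{\CO}^{X_n}\bigr),
\]
where $B_n(t) := \int_0^{t\wedge\tau_{\CO}^{X_n}} b(X_n(s))\,\md s$ and $M_n(t) := \int_0^{t\wedge\tau_{\CO}^{X_n}} \si_n(X_n(s))\,\md W_n(s)$. Since tight sequences in $C([0,T],\BR^d)$ are closed under addition (the sum lies in a Minkowski sum of compacts, hence compact, with the prescribed probability), it suffices to prove tightness of each of the four summands individually.

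The first term is deterministic and converges, so it is tight. For $B_n$, the stopped trajectory $X_n^{\CO}$ lives in $\ol{\CO}\subseteq\CK\subseteq\CU$, and $b$ is continuous on $\CU$, hence bounded on $\CK$ by some constant $C_b$; therefore $B_n$ is $C_b$-Lipschitz uniformly in $n$ and starts at $0$, which yields equicontinuity and hence tightness by Arzel\`a--Ascoli. For $L_n(\cdot\wedge\tau_{\CO}^{X_n})$, I would use the pointwise inequality
\[
\bigl\|L_n(t\wedge\tau_{\CO}^{X_n}) - L_n(s\wedge\tau_{\CO}^{X_n})\bigr\| \le l_n(t\wedge\tau_{\CO}^{X_n}) - l_n(s\wedge\tau_{\CO}^{X_n})\qquad (s\le t),
\]
so that the modulus of continuity of $L_n(\cdot\wedge\tau_{\CO}^{X_n})$ is dominated by that of $l_n(\cdot\wedge\tau_{\CO}^{X_n})$; since the latter sequence is tight in $C[0,T]$ by Lemma~\ref{lemma:l-n-tight}, so is the former in $C([0,T],\BR^d)$.

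The main (and only genuinely stochastic) step is the tightness of the martingale piece $M_n$. Here I would exploit that $\si_n \to \si$ locally uniformly on $\CU$, which together with continuity of $\si$ gives a uniform bound $\sup_{n}\sup_{x\in\CK}\|\si_n(x)\| \le C_{\si} < \infty$. Consequently, for $0\le s\le t\le T$, the Burkholder--Davis--Gundy inequality applied to each coordinate of the stopped stochastic integral yields
\[
\EE\bigl\|M_n(t) - M_n(s)\bigr\|^4 \;\le\; C\, C_{\si}^{4}\,(t-s)^{2},
\]
with an absolute constant $C$. Combined with $M_n(0)=0$, Kolmogorov's tightness criterion delivers tightness of $(M_n)_{n\ge 1}$ in $C([0,T],\BR^d)$. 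The expected main obstacle is routine bookkeeping around the stopping time $\tau_{\CO}^{X_n}$ inside the BDG estimate; this is handled by noting that the integrand $\si_n(X_n(s))\mathbf 1_{\{s\le \tau_{\CO}^{X_n}\}}$ is still progressively measurable and bounded by $C_{\si}$ on the event of integration, so the quadratic variation is at most $C_{\si}^{2}(t-s)$ almost surely. Summing the four tightness conclusions completes the proof.
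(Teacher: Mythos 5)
Your decomposition treats $V_n$ as if it contained the penalty drift $f_n$; you take the displayed formula for $V_n$ at face value. But that display has a typo: with the $f_n$ integral included, $V_n \equiv X_n$, which contradicts the very next displayed identity $X_n = V_n + L_n$ in~\eqref{eq:X-n-sum}. The intended definition, which is the one actually used later in the paper (see the explicit form of $V_{n_k}$ in~\eqref{eq:V-conv} inside the proof of Lemma~\ref{lemma:V-view}), is $V_n(t) = z_n + \int_0^t b(X_n(s))\,\md s + \int_0^t \si_n(X_n(s))\,\md W_n(s)$, with no $f_n$ term. Consequently your fourth summand $L_n\bigl(\cdot\wedge\tau_{\CO}^{X_n}\bigr)$ is spurious.

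This is not a cosmetic point, because the way you handle $L_n$ is circular. You invoke Lemma~\ref{lemma:l-n-tight} to dominate its modulus of continuity by that of $l_n\bigl(\cdot\wedge\tau_{\CO}^{X_n}\bigr)$. But the paper's proof of Lemma~\ref{lemma:l-n-tight} proceeds through Lemma~\ref{lemma:phi-X-n-tight}, whose proof explicitly cites Lemma~\ref{lemma:V-n-tight} --- the statement you are trying to prove. The whole design of the argument is that $V_n$ has bounded coefficients on $\CK$ (uniformly in $n$), so its tightness can be established first and independently of the penalty term; only then does one use that tightness to control $\phi(X_n)$, $\al_n$, and ultimately $l_n$. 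Drop $L_n$ from your decomposition and the circularity disappears.

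The remaining three pieces are handled correctly and essentially match the paper: $z_n \to z_0$ is immediate, and the Lipschitz bound on $\int_0^{\cdot\wedge\tau} b(X_n)\,\md s$ gives Arzel\`a--Ascoli equicontinuity exactly as in the paper. For the martingale piece the paper appeals to an external lemma from \cite{MyOwn6}; your alternative via Burkholder--Davis--Gundy fourth-moment estimates and Kolmogorov's tightness criterion is a valid self-contained replacement, since the stopped integrand is bounded by $C_\si$ and hence the quadratic-variation increment by $C_\si^2(t-s)$.
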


\begin{lemma} The sequence $\left(l_n\left(\cdot\wedge\tau_{\CO}^{X_n}\right)\right)_{n \ge 1}$ is tight in $C[0, T]$. 
\label{lemma:l-n-tight}
\end{lemma}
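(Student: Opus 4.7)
The strategy is a Lyapunov estimate based on the signed distance $\phi$, promoted to tightness in $C[0,T]$ via a time-change trick. Since $\phi$ is $C^2$ on a neighborhood $\CU_0$ of $\pa D$ (Lemma~3.1) with $\nabla\phi|_{\pa D} = \fn$, the reflection condition $r\cdot\fn \equiv 1$ gives $\nabla\phi \cdot r \equiv 1$ on $\pa D$; by continuity, $\nabla\phi(x)\cdot r(y(x))/\|r(y(x))\|$ is bounded below by some $c > 0$ in a (possibly smaller) boundary collar $\CU_0'$. Combining this geometric fact with the direction-emulation hypothesis (Definition~2.3) applied to points $x$ near $\pa D$ with $\|f_n(x)\|\ge\eps$, and with hypothesis (c) ($f_n\to 0$ locally uniformly on $D$) applied to the complementary set, I would establish the pointwise lower bound
$$
\nabla\phi(x)\cdot f_n(x) \ \ge\ c\,\|f_n(x)\| - \eps_n \qquad \text{for all } x \in \overline{\CO},
$$
valid for $n$ sufficiently large, with $\eps_n \to 0$.

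Next, I would apply Itô's formula to $\phi(X_n)$ stopped at $\tau_{\CO}^{X_n}$. The terms $\nabla\phi\cdot b$ and $\tfrac{1}{2}\tr(\sigma_n\sigma_n'D^2\phi)$ are bounded uniformly on $\CK$ (using condition (d)), and the martingale part $M_n^{\CO}(t) := \int_0^{t\wedge\tau_{\CO}^{X_n}}\nabla\phi(X_n)'\sigma_n(X_n)\,\md W_n$ has quadratic variation bounded by some $CT$ uniformly in $n$. Combining with the pointwise bound above and rearranging,
$$
c\,l_n\!\left(t\wedge\tau_{\CO}^{X_n}\right) \ \le\ \phi\!\left(X_n^{\CO}(t)\right) - \phi(z_n) + CT + \bigl|M_n^{\CO}(t)\bigr| + \eps_n T.
$$
Since $\phi$ is bounded on $\CK$ and $\sup_{t\le T}|M_n^{\CO}(t)|$ is stochastically bounded (Doob's inequality), the right-hand side is $O_{\MP}(1)$ uniformly in $n$ and $t\in[0,T]$. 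Hence $\sup_{t\le T}l_n(t\wedge\tau_{\CO}^{X_n}) = O_{\MP}(1)$.

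To upgrade this uniform stochastic boundedness to tightness of paths in $C[0,T]$, I would invoke the classical time-change trick. Define $\gamma_n(t) := t + l_n(t\wedge\tau_{\CO}^{X_n})$; since $\gamma_n$ is continuous and strictly increasing with $\gamma_n' \ge 1$, its continuous inverse $\theta_n$ is $1$-Lipschitz. A direct computation then gives
$$
\tilde l_n(u) \ :=\ l_n\!\left(\theta_n(u)\wedge\tau_{\CO}^{X_n}\right) \ =\ u - \theta_n(u),
$$
so $\tilde l_n$ is also $1$-Lipschitz with $\tilde l_n(0) = 0$. Consequently, for every fixed $U>0$, the family $\{(\theta_n,\tilde l_n)|_{[0,U]}\}$ is uniformly equicontinuous and uniformly bounded on the high-probability event $\{\gamma_n(T)\le U\}$ (whose probability tends to $1$ as $U\to\infty$, uniformly in $n$, by the previous paragraph), hence tight in $C[0,U]^2$ by Arzel\`a--Ascoli. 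Extracting a subsequential limit $(\theta,\tilde l)$ via Skorohod representation and transferring back through $l_n(\cdot\wedge\tau_{\CO}^{X_n}) = \tilde l_n\circ\gamma_n = \tilde l_n\circ\theta_n^{-1}$, tightness of $l_n(\cdot\wedge\tau_{\CO}^{X_n})$ in $C[0,T]$ will follow from the convergence of continuous monotone inverses.

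The hard part will be the transfer-back step: one needs to argue that the limit $\theta$ is strictly increasing a.s., so that its generalized inverse is continuous and the subsequential limit of $l_n(\cdot\wedge\tau_{\CO}^{X_n})$ indeed lives in $C[0,T]$ rather than merely in $D[0,T]$. Equivalently, one must rule out atoms in the limiting measure $dl$, which corresponds to the continuity of the boundary local time of the limiting reflected diffusion. All other steps --- the Lyapunov bound, the behavior of the martingale term, and the Lipschitz properties after time change --- are essentially mechanical.
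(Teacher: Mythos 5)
Your Lyapunov set-up (applying It\^o to a smoothing of $\phi$ along $X_n$, together with the pointwise bound $\nabla\phi\cdot f_n\ge c\,\|f_n\|-\eps_n$, which is Lemma~\ref{lemma:prop-alpha} in spirit) is essentially the same first step as the paper's. However, after that step the proposal diverges and develops a genuine gap.

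The Lyapunov inequality you write only controls $\sup_{t\le T}l_n(t\wedge\tau_{\CO}^{X_n})$ in probability; tightness in $C[0,T]$ additionally requires control of the modulus of continuity $\oa(l_n(\cdot\wedge\tau_{\CO}^{X_n}),[0,T],\de)$. The time-change device you invoke does not supply this. Writing $\gamma_n(t)=t+l_n(t\wedge\tau_{\CO}^{X_n})$ and passing to the $1$-Lipschitz pair $(\theta_n,\tilde l_n)$ gives tightness of $(\theta_n,\tilde l_n)$, but transferring back via $l_n(\cdot\wedge\tau)=\tilde l_n\circ\gamma_n$ requires precisely that no subsequential limit $\theta$ of $\theta_n$ has a flat stretch --- i.e.\ that $l_n$ does not build up a jump in the limit. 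You flag this as ``the hard part'' but leave it open, and in fact it is equivalent to the statement you are trying to prove: flat stretches of $\theta$ correspond exactly to a failure of asymptotic equicontinuity of $l_n$, which is exactly what tightness in $C[0,T]$ rules out. Appealing to ``continuity of the boundary local time of the limiting reflected diffusion'' is circular at this stage, since one does not yet know that the limit is a reflected diffusion.

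The missing ingredient is condition (a) of Theorem~\ref{thm:1} (the sequence is asymptotically in $\ol D$), which your proposal never uses. The paper's route is to first prove tightness of $\phi(X_n^{\CO}(\cdot))$ in $C[0,T]$ (Lemma~\ref{lemma:phi-X-n-tight}): if the modulus of continuity of $\phi(X_n^{\CO})$ were large while, on the high-probability event $A_n$, $\phi(X_n^{\CO})$ stays above $-\eps/2$, then the large oscillation must occur in the region $\phi\ge\eps/4$ where $f_n\to 0$ uniformly, so the oscillation must come from $V_n$ --- contradicting tightness of $V_n$ (Lemma~\ref{lemma:V-n-tight}). Feeding this modulus control through the It\^o decomposition~\eqref{eq:Y-n-main} gives modulus control of $\int_0^{\cdot\wedge\tau}\al_n(X_n)\,\md s$ (Lemma~\ref{lemma:int-a-n-tight}), and the sandwich $0\le c_1(l_n(t')-l_n(t''))\le \int_{t''}^{t'}\al_n\,\md s + c_2(t'-t'')$ from Lemma~\ref{lemma:prop-alpha} then yields tightness of $l_n$ via Lemma~\ref{lemma:three-processes-tight}, first for the process stopped at $\tau^{X_n}_{\tilde\CO(\de_0)}$ and finally, again using condition~(a), for $\tau^{X_n}_{\CO}$. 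A secondary, minor issue: applying It\^o directly to $\phi$ is not justified away from $\pa D$, where $\phi$ need not be $C^2$; you need the truncated composition $\psi\circ\phi$ as in the paper.
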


Compare $L_n$ and $l_n$ from~\eqref{eq:L-l-n}. From Lemmata~\ref{lemma:l-n-tight} and~\ref{lemma:bounded-tight}, the sequence $\bigl(L_n\bigl(\cdot\wedge\tau_{\CO}^{X_n}\bigr)\bigr)_{n \ge 1}$ is tight in $C([0, T], \BR^d)$. In addition, all $W_n$ have the same law (of a $d$-dimensional Brownian motion) in $C([0, T], \BR^d)$. By Lemma~\ref{lemma:stopping-tight}, the sequence $\bigl(W_n\bigl(\cdot\wedge\tau^{X_n}_{\CO}\bigr)\bigr)_{n \ge 1}$ is tight in $C([0, T], \BR^d)$. From~\eqref{eq:X-n-sum}, we have:
\begin{equation}
\label{eq:stopped-sum}
X_n^{\CO}(t) = X_n\bigl(t\wedge\tau^{X_n}_{\CO}\bigr) = V_n\bigl(t\wedge\tau^{X_n}_{\CO}\bigr) + L_n\bigl(t\wedge\tau^{X_n}_{\CO}\bigr).
\end{equation}
Therefore, the following sequence is tight in $C([0, T], \BR^{4d+1})$:
\begin{equation}
\label{eq:sequence-big}
\bigl(\bigl(X_n^{\CO}, L_n\bigl(\cdot\wedge\tau^{X_n}_{\CO}\bigr), l_n\bigl(\cdot\wedge\tau_{\CO}^{X_n}\bigr), V_n\bigl(\cdot\wedge\tau_{\CO}^{X_n}\bigr), W_n\bigl(\cdot\wedge\tau^{X_n}_{\CO}\bigr)\bigr)\bigr)_{n \ge 1}.
\end{equation}
Take $\left(\ol{Z}, \ol{L}, \ol{l}, \ol{V}, \ol{W}\right)$, a weak limit point of the sequence~\eqref{eq:sequence-big} in $C([0, T], \BR^{4d+1})$. Without loss of generality, for simplicity of notation we can assume that the whole sequence~\eqref{eq:sequence-big} converges weakly to this limit point. By the Skorohod representation theorem, after changing the probability space, we can take a.s. uniform convergence on $[0, T]$, instead of weak convergence. We can assume the filtration $(\CF_t)_{t \ge 0}$ is generated by all these processes. Taking the limit in~\eqref{eq:stopped-sum}, we have:
$$
\ol{Z}(t) = \ol{V}(t) + \ol{L}(t),\ \ t \in [0, T].
$$  

\begin{lemma}
The process $\ol{L} = (\ol{L}(t), 0 \le t \le T\wedge\tau^{\ol{Z}}_{\CO})$ can be represented as
$$
\ol{L}(t) = \int_0^t\frac{r(\ol{Z}(s))}{\norm{r(\ol{Z}(s))}}\, \md\ol{l}(s),\ \ \mbox{for}\ \ t \le \tau^{\ol{Z}}_{\CO}\wedge T.
$$
\label{lemma:L-l}
\end{lemma}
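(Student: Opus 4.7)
After passing to the Skorohod coupling of the convergent subsequence, realize~\eqref{eq:sequence-big} as a.s.\ uniform limits on $[0,T]$. The argument has two main parts: showing that the nondecreasing $\ol{l}$ grows only on $\{s : \ol{Z}(s)\in\pa D\}$, and identifying the direction of the Stieltjes increments of $\ol{L}$ there as $r/\norm{r}$. First, from the asymptotic-in-$\ol{D}$ hypothesis applied to $\CK=\ol{\CO}$ combined with a.s.\ uniform convergence, $\phi(\ol{Z}(t))\ge 0$, so $\ol{Z}\in\ol{D}$. By Lemma~\ref{lemma:aux}, fix $\de^*>0$ with $\CK(\de^*)\subseteq\CU_0$, so that the map $R(x):=r(y(x))/\norm{r(y(x))}$ is continuous on $\CK(\de^*)$ (its denominator is bounded below by $1$, since $r\cdot\fn\equiv 1$ and $\norm{\fn}=1$).

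\textbf{Support of $\md\ol{l}$.} For $\de\in(0,\de^*)$ pick a continuous cutoff $\psi_\de:\BR^d\to[0,1]$ with $\psi_\de\equiv 0$ on $\{\phi\le\de/2\}$ and $\psi_\de\equiv 1$ on $\{\phi\ge\de\}$. The uniform convergences $X_n^{\CO}\to\ol{Z}$ and $l_n(\cdot\wedge\tau^{X_n}_{\CO})\to\ol{l}$ give weak convergence of the associated Stieltjes measures, hence
$$
\int_0^T \psi_\de(\ol{Z}(s))\,\md\ol{l}(s) = \lim_n \int_0^{T\wedge\tau^{X_n}_{\CO}} \psi_\de(X_n(s))\norm{f_n(X_n(s))}\,\md s.
$$
The right-hand integrand is supported in $\CK\cap\{\phi\ge\de/2\}$, a compact subset of $D$, so the right side is bounded by $T\cdot\sup_{x\in\CK,\,\phi(x)\ge\de/2}\norm{f_n(x)}\to 0$ by condition~(c). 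Hence $\md\ol{l}$ gives zero mass to $\{\phi(\ol{Z})\ge\de\}$, and letting $\de\downarrow 0$ (with $\ol{Z}\in\ol{D}$) shows that $\md\ol{l}$ is concentrated on $\{\ol{Z}\in\pa D\}$. On this support $y(\ol{Z}(s))=\ol{Z}(s)$, so $R(\ol{Z}(s))=r(\ol{Z}(s))/\norm{r(\ol{Z}(s))}$.

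\textbf{Direction identification.} Write $L_n(t)=\int_0^t h_n(s)\,\md l_n(s)$ with $h_n:=f_n(X_n)/\norm{f_n(X_n)}$ (set to $0$ where $f_n$ vanishes) and compare to $A_n(t):=\int_0^t R(X_n(s))\mathbf{1}_{\CK(\de)}(X_n(s))\,\md l_n(s)$. For fixed $\de,\eps>0$ split $L_n-A_n$ according to whether (i)~$X_n(s)\notin\CK(\de)$, (ii)~$X_n(s)\in\CK(\de)$ and $\norm{f_n(X_n(s))}<\eps$, or (iii)~$X_n(s)\in\CK(\de)$ and $\norm{f_n(X_n(s))}\ge\eps$. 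On the high-probability event $\{\min_t\phi(X_n^{\CO})>-\de\}$, piece~(i) is supported on the compact subset $\CK\cap\{\phi\ge\de\}\subset D$ and is $O\bigl(T\sup_{\CK\cap\{\phi\ge\de\}}\norm{f_n}\bigr)\to 0$ by~(c); piece~(ii) has norm at most $2\eps T$; piece~(iii) has norm at most $l_n(T\wedge\tau^{X_n}_{\CO})\cdot\sup_{x\in\CK(\de),\,\norm{f_n(x)}\ge\eps}\norm{f_n(x)/\norm{f_n(x)}-R(x)}$, whose prefactor is tight by Lemma~\ref{lemma:l-n-tight} and whose supremum vanishes as $n\to\infty$ then $\de\downarrow 0$ by Definition~\ref{def:direction}. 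Sending $n\to\infty$, $\de\downarrow 0$, $\eps\downarrow 0$ in this order, and invoking Step~1 to pass to the limit in $A_n$ (whose integrand is continuous on $\CK(\de^*)$ while $\md\ol{l}$ is carried by $\CK(\de)$), yields
$$
\ol{L}(t\wedge\tau^{\ol{Z}}_{\CO}) = \int_0^{t\wedge\tau^{\ol{Z}}_{\CO}} R(\ol{Z}(s))\,\md\ol{l}(s) = \int_0^{t\wedge\tau^{\ol{Z}}_{\CO}} \frac{r(\ol{Z}(s))}{\norm{r(\ol{Z}(s))}}\,\md\ol{l}(s).
$$

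\textbf{Main obstacle.} The delicate step is the direction identification: the map $R$ is only defined near $\pa D$, so the passage to the limit in $A_n$ requires the concentration of $\md\ol{l}$ on $\pa D$ from Step~1 to even make sense of the limit object. Orchestrating the iterated limits $n\to\infty$, $\de\downarrow 0$, $\eps\downarrow 0$ across the three-piece split so that each piece vanishes in its own regime is where the real work hides; the tightness of $l_n(T\wedge\tau^{X_n}_{\CO})$ from Lemma~\ref{lemma:l-n-tight} is essential for controlling piece~(iii).
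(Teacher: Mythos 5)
Your proof is essentially correct, but it organizes the argument in a genuinely different way from the paper, and the difference is worth noting.

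The paper first introduces a Tietze extension $\mg:\BR^d\to\BR^d$ of $r/\norm{r}$ (see~\eqref{eq:def-g}) and then writes
$$
L_n(t) = \int_0^t\Bigl[\tfrac{f_n(X_n(s))}{\norm{f_n(X_n(s))}} - \mg(\ol{Z}(s))\Bigr]\,\md l_n(s) + \int_0^t\mg(\ol{Z}(s))\,\md l_n(s).
$$
Because the comparison direction $\mg(\ol{Z}(s))$ is already a continuous function of the \emph{limit} process, the second term converges by Lemma~\ref{lemma:weak-like-conv} with no localization issues. The first term is then split on $\{\norm{f_n(X_n)}<\eps\}$ versus $\{\norm{f_n(X_n)}\ge\eps\}$, and Lemma~\ref{lemma:closedness} controls the latter. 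The support statement (your Step~1) is the paper's Lemma~\ref{lemma:prop-l}, proved separately and combined only at the end to convert $\mg$ back into $r/\norm{r}$.

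You instead work with the locally-defined direction field $R(x)=r(y(x))/\norm{r(y(x))}$, prove the support of $\md\ol{l}$ up front, and compare against $A_n(t)=\int_0^t R(X_n(s))\mathbf{1}_{\CK(\de)}(X_n(s))\,\md l_n(s)$, which evaluates $R$ at the \emph{pre-limit} process $X_n$. This buys a more self-contained presentation (no Tietze extension, and the support of $\md\ol{l}$ is made explicit where it is used), at the cost of an extra limit passage in $A_n$ that the paper avoids entirely.

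That extra step is where your write-up has a real, if fixable, gap: the integrand $R(x)\mathbf{1}_{\CK(\de)}(x)$ is not continuous (the indicator jumps at $\pa\CK(\de)$), so the weak-$*$ convergence $\md l_n^{\CO}\to\md\ol{l}$ together with $X_n^{\CO}\to\ol{Z}$ does not by itself yield $A_n(t)\to\int_0^t R(\ol{Z})\mathbf{1}_{\CK(\de)}(\ol{Z})\,\md\ol{l}$. The parenthetical ``whose integrand is continuous on $\CK(\de^*)$ while $\md\ol{l}$ is carried by $\CK(\de)$'' conflates continuity of $R$ with continuity of $R\mathbf{1}_{\CK(\de)}$, and it reasons about the support of $\md\ol{l}$ when the obstruction is actually about where $X_n$ lands. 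The fix is standard — replace $\mathbf{1}_{\CK(\de)}$ by a continuous cutoff $\chi$ sandwiched between $\mathbf{1}_{\CK(\de-\de')}$ and $\mathbf{1}_{\CK(\de)}$, pass to the limit with the continuous integrand $R\chi$, and show the error incurred on the thin annulus is negligible (using condition~(c) on the inner band $\{\de-\de'\le\phi<\de\}$ and the asymptotically-in-$\ol{D}$ event on the outer band $\{-\de<\phi\le-(\de-\de')\}$) — but as written, the step is asserted rather than justified. The paper's device of evaluating the comparison direction at $\ol{Z}(s)$ rather than $X_n(s)$ is precisely what lets it sidestep this issue; if you adopt that single change, the rest of your argument goes through verbatim and is in fact slightly cleaner than the paper's in that it makes the role of Lemma~\ref{lemma:prop-l} explicit from the start.
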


\begin{lemma} The process $\ol{V} = (\ol{V}(t), 0 \le t \le T\wedge\tau^{\ol{Z}}_{\CO})$ can be represented as
$$
\ol{V}(t) = z_0 + \int_0^tb(\ol{Z}(s))\,\md s + \int_0^t\si(\ol{Z}(s))\,\md\ol{W}(s),\ \ \mbox{for}\ \ t \le \tau^{\ol{Z}}_{\CO}\wedge T.
$$
\label{lemma:V-view}
\end{lemma}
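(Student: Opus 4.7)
My plan is to pass to the limit in the stopped version of the defining identity for $V_n$, handling the drift and martingale parts separately. A useful preliminary: since $\CO = \Int\CK$ is open and (after the Skorohod coupling) $X_n^\CO \to \ol Z$ uniformly on $[0,T]$ almost surely, lower semicontinuity of the first-exit time from an open set gives $\liminf_n \tau^{X_n}_\CO \ge \tau^{\ol Z}_\CO$ a.s. Hence for any deterministic $t < \tau^{\ol Z}_\CO \wedge T$, one has $t < \tau^{X_n}_\CO$ eventually, so the stops are vacuous on $[0,t]$; after establishing the representation for such $t$, I would extend up to $\tau^{\ol Z}_\CO \wedge T$ by path continuity.

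For the drift, uniform convergence $X_n^\CO \to \ol Z$ together with continuity of $b$ on the compact set $\CK$ gives uniform a.s.\ convergence
$$\int_0^{\cdot \wedge \tau^{X_n}_\CO} b(X_n^\CO(s))\,\md s \longrightarrow \int_0^{\cdot \wedge \tau^{\ol Z}_\CO} b(\ol Z(s))\,\md s\ \ \mbox{in}\ \ C([0,T],\BR^d).$$
For the martingale part, I would introduce
$$M_n(t) := V_n(t \wedge \tau^{X_n}_\CO) - z_n - \int_0^{t \wedge \tau^{X_n}_\CO} b(X_n^\CO(s))\,\md s = \int_0^{t \wedge \tau^{X_n}_\CO}\si_n(X_n^\CO(s))\,\md W_n(s),$$
a bounded continuous $L^2$-martingale (its coefficients are uniformly bounded on $\CK$ by condition~(d)), with quadratic covariation $\langle M_n^i, M_n^j\rangle_t = \int_0^{t \wedge \tau^{X_n}_\CO}(\si_n\si_n')^{ij}(X_n^\CO(s))\,\md s$ and cross-covariation with the stopped Brownian motion $\widetilde W_n := W_n(\cdot\wedge\tau^{X_n}_\CO)$ given by $\langle M_n^i, \widetilde W_n^j\rangle_t = \int_0^{t \wedge \tau^{X_n}_\CO}\si_n^{ij}(X_n^\CO(s))\,\md s$. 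By condition~(d) together with $X_n^\CO, \ol Z \in \CK$, one has $\si_n(X_n^\CO) \to \si(\ol Z)$ uniformly on $[0,T]$ a.s., so both covariation processes converge uniformly in $t$ to $\int_0^{t \wedge \tau^{\ol Z}_\CO} A^{ij}(\ol Z(s))\,\md s$ and $\int_0^{t \wedge \tau^{\ol Z}_\CO} \si^{ij}(\ol Z(s))\,\md s$, respectively.

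A standard closure argument for continuous $L^2$-martingales (uniform integrability plus uniform convergence of quadratic variations) then shows that $\ol M(t) := \ol V(t) - z_0 - \int_0^{t \wedge \tau^{\ol Z}_\CO} b(\ol Z(s))\,\md s$ is a continuous martingale on $[0, \tau^{\ol Z}_\CO \wedge T]$ with respect to the filtration generated by $(\ol Z, \ol V, \ol W)$, with the expected covariations, and that $\ol W$ is a Brownian motion on this interval. Since $\langle \ol M^i, \ol W^j\rangle_t = \int_0^{t \wedge \tau^{\ol Z}_\CO} \si^{ij}(\ol Z(s))\,\md s$ and $\langle \ol M^i, \ol M^j\rangle_t = \int_0^{t\wedge\tau^{\ol Z}_\CO} A^{ij}(\ol Z(s))\,\md s$, the continuous martingale $\ol M - \int_0^{\cdot} \si(\ol Z(s))\,\md\ol W(s)$ has zero quadratic variation on $[0, \tau^{\ol Z}_\CO \wedge T]$ and therefore vanishes identically, yielding the asserted representation of $\ol V$. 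The main obstacle will be the rigorous passage to the limit in the stochastic integral and its covariations under the $n$-dependent stopping times $\tau^{X_n}_\CO$ (which need not themselves converge to $\tau^{\ol Z}_\CO$); this will be circumvented by first working on deterministic sub-intervals where the stops are eventually vacuous, and only then extending by continuity.
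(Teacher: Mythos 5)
Your handling of the drift term is the same as the paper's (uniform convergence of $X_n^{\CO}$ to $\ol Z$ on $[0,T]$, values in the compact $\CK$, $b$ bounded and continuous there, dominated convergence). Where you genuinely diverge is the martingale term. The paper does not go through a martingale-problem characterization at all: it applies Lemma~\ref{lemma:conv-of-ito}, an auxiliary result proved via Burkholder--Davis--Gundy, which delivers directly
$\ME\max_{0\le t\le T}\bigl(\int_0^t\si_n(X_n(s))\,\md W_n(s)-\int_0^t\si(\ol Z(s))\,\md\ol W(s)\bigr)^2\to 0$
from boundedness of the integrands on $\CK$, a.s.\ pointwise convergence of the integrands, and a.s.\ uniform convergence of the driving Brownian motions on $[0,T]$. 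From that $L^2$ statement, convergence at a random time $t<\tau^{\ol Z}_{\CO}$ is immediate, and the representation drops out without ever having to show that $\ol M$ is a martingale, identify its bracket, or argue by zero quadratic variation. Your route instead goes through the Stroock--Varadhan/Ethier--Kurtz/Kang--Williams template: take limits of the covariations $\langle M_n^i,M_n^j\rangle$ and $\langle M_n^i,\widetilde W_n^j\rangle$, invoke a closure lemma to get that $\ol M$ is a continuous martingale with prescribed brackets relative to the filtration generated by $(\ol Z,\ol V,\ol W)$, then subtract $\int_0^\cdot\si(\ol Z)\,\md\ol W$ and use zero quadratic variation. This is correct in principle and arguably more robust (it would survive if one only had weak convergence of the martingale integrators rather than a.s.\ uniform convergence of Brownian motions), but it is considerably heavier: it requires establishing the martingale property of the limit (including the measurability/filtration bookkeeping after Skorohod coupling) and establishing that $\ol W$ is indeed a Brownian motion on $[0,\tau^{\ol Z}_\CO\wedge T]$, neither of which the paper needs because Lemma~\ref{lemma:conv-of-ito} bypasses them.

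Two points in your sketch need tightening. First, "deterministic sub-intervals where the stops are eventually vacuous" is not quite the right phrasing: the interval $[0,t]$ with $t<\tau^{\ol Z}_{\CO}$ is determined by a random time, so the statement "$t<\tau^{X_n}_\CO$ eventually" holds only $\omega$-by-$\omega$ on the event $\{t<\tau^{\ol Z}_{\CO}\}$; this is exactly how the paper phrases it, and it is fine, but you cannot treat $[0,t]$ as deterministic in the closure step. In particular, the uniform convergence on all of $[0,T]$ that you assert for the stopped drift integral and for the covariation processes is an over-claim unless $\tau^{X_n}_\CO\to\tau^{\ol Z}_\CO$, which you correctly say need not hold; you only have lower semicontinuity, and the upper endpoints $t\wedge\tau^{X_n}_\CO$ need not converge for $t>\tau^{\ol Z}_\CO$. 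The argument must be formulated so that only times $t\le\tau^{\ol Z}_\CO\wedge T$ enter. Second, the "standard closure argument" needs the martingale property of $M_n$ with respect to a filtration compatible with the Skorohod coupling, and a separate argument that $\ol W$ is a Brownian motion up to $\tau^{\ol Z}_{\CO}\wedge T$; these are routine but not free, and they are precisely the steps the paper's $L^2$ lemma was designed to avoid.
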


\begin{lemma} The process $\ol{Z}$ takes values only in $\ol{D}$ a.s.:
$\MP\Bigl(\ol{Z}(t)\in \ol{D}\ \ \forall t \in [0, T\wedge\tau^{\ol{Z}}_{\CO}]\Bigr) = 1$. 
\label{lemma:Z-in-D}
\end{lemma}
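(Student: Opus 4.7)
My plan is to combine the ``asymptotically in $\ol{D}$'' hypothesis (a) of Theorem~\ref{thm:1} with the a.s.\ uniform convergence $X_n^{\CO} \to \ol{Z}$ on $[0, T]$ furnished by the Skorohod representation invoked immediately before the statement of the lemma, and to deduce from these two ingredients that $\phi(\ol{Z}(t)) \ge 0$ for every $t \in [0, T]$.

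First I would reconcile the two stopping times involved. Since $\CO = \Int\CK \subseteq \CK$, the exit times satisfy $\tau^{X_n}_{\CO} \le \tau^{X_n}_{\CK}$, so $X_n^{\CO}(t) = X_n^{\CK}\bigl(t \wedge \tau^{X_n}_{\CO}\bigr)$ for every $t \ge 0$, whence
$$
\min_{0 \le t \le T}\phi\bigl(X_n^{\CO}(t)\bigr) \ge \min_{0 \le t \le T}\phi\bigl(X_n^{\CK}(t)\bigr).
$$
Definition~\ref{def:asymp} applied to the compact set $\CK \subseteq \CU$ then yields, for every fixed $\eta > 0$,
$$
\lim_{n \to \infty}\MP\Bigl(\min_{0 \le t \le T}\phi\bigl(X_n^{\CO}(t)\bigr) > -\eta\Bigr) = 1.
$$

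Next I would pass to the limit. The signed distance $\phi$ is continuous on $\BR^d$ (since $x \mapsto \dist(x, \pa D)$ is $1$-Lipschitz), and so the a.s.\ uniform convergence $X_n^{\CO} \to \ol{Z}$ on $[0, T]$ implies
$$
M_n := \min_{0 \le t \le T}\phi\bigl(X_n^{\CO}(t)\bigr) \longrightarrow M := \min_{0 \le t \le T}\phi\bigl(\ol{Z}(t)\bigr) \quad \text{a.s.}
$$
For each $\eta > 0$, the inclusion $\{M < -\eta\} \subseteq \liminf_n \{M_n < -\eta/2\}$ follows from this convergence, and combining it with Fatou's lemma and the previous display gives $\MP(M < -\eta) \le \liminf_n \MP(M_n < -\eta/2) = 0$. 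Sending $\eta \downarrow 0$ yields $M \ge 0$ almost surely, i.e.\ $\ol{Z}(t) \in \ol{D}$ for every $t \in [0, T]$, which is strictly stronger than the claim restricted to $t \le T \wedge \tau^{\ol{Z}}_{\CO}$.

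I do not expect a real obstacle here: the two hypotheses dovetail perfectly once the limit is taken. The only point that requires a moment's care is the translation between $X_n^{\CK}$ (the object appearing in Definition~\ref{def:asymp}) and $X_n^{\CO}$ (the object actually appearing in the weakly convergent sequence~\eqref{eq:sequence-big}), and this is precisely what the comparison of exit times above accomplishes.
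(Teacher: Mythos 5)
Your argument is correct and follows essentially the same route as the paper: both proofs combine the asymptotically-in-$\ol{D}$ hypothesis with the (a.s.\ uniform, post-Skorohod) convergence $X_n^{\CO} \to \ol{Z}$, and both reconcile $X_n^{\CK}$ versus $X_n^{\CO}$ by the same event inclusion via $\tau^{X_n}_{\CO} \le \tau^{X_n}_{\CK}$. The only cosmetic difference is that the paper invokes the portmanteau theorem for the open set $\CG(\eta)$, while you pass to the a.s.\ limit of $\min_t\phi(X_n^{\CO}(t))$ and apply Fatou's lemma to the events; these are interchangeable here.
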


\begin{lemma} The process $\ol{l} = (\ol{l}(t), 0 \le t \le T\wedge\tau^{\ol{Z}}_{\CO})$ has a.s. nondecreasing trajectories, $\ol{l}(0) = 0$, and  $\ol{l}$ can grow only when $\ol{Z}(t) \in \pa D$; that is, 
$$
\int_0^{T\wedge\tau^{\ol{Z}}_{\CO}}1(\ol{Z}(t) \in D)\,\md\ol{l}(t) = 0.
$$
\label{lemma:prop-l}
\end{lemma}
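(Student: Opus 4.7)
\medskip

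The plan is to verify the three assertions separately, starting with the two easy ones and leaving the support property for last. The monotonicity and initial condition $\ol l(0)=0$ should fall out immediately from the Skorokhod coupling: each $l_n(\cdot\wedge\tau^{X_n}_{\CO})$ is a nondecreasing continuous function starting at $0$ (being the integral of the nonnegative integrand $\norm{f_n(X_n(s))}$), and these properties pass to $\ol l$ under a.s.\ uniform convergence on $[0,T]$.

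The real work is the support statement. The idea is to test against continuous cutoff functions supported in $D$ and exploit condition (c) of Theorem~\ref{thm:1}. First I would pick an arbitrary continuous $\psi\colon\ol D\to[0,1]$ whose support is a compact subset $\CC\subseteq D\cap\CU$, and compute
\begin{equation*}
\int_0^T\psi\bigl(X_n^{\CO}(s)\bigr)\,\md l_n\bigl(s\wedge\tau^{X_n}_{\CO}\bigr)
=\int_0^{T\wedge\tau^{X_n}_{\CO}}\psi(X_n(s))\,\norm{f_n(X_n(s))}\,\md s
\le T\,\norm{\psi}_{\infty}\sup_{x\in\CC\cap\CK}\norm{f_n(x)}.
\end{equation*}
By assumption (c), the right-hand side tends to $0$ as $n\to\infty$. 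On the other hand, the sequence $l_n(\cdot\wedge\tau^{X_n}_{\CO})$ converges uniformly to $\ol l$ and the sequence $X_n^{\CO}$ converges uniformly to $\ol Z$; a standard integration-by-parts argument then gives
\begin{equation*}
\int_0^T\psi(X_n^{\CO}(s))\,\md l_n\bigl(s\wedge\tau^{X_n}_{\CO}\bigr)
\longrightarrow\int_0^T\psi(\ol Z(s))\,\md\ol l(s),
\end{equation*}
so the limit integral must vanish.

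The last step is to upgrade this to the indicator of $D$. I would fix an exhaustion of $D$ by compact sets, say $K_m:=\{x\in\BR^d\mid\phi(x)\ge 1/m,\ \norm{x}\le m\}$, and choose continuous $\psi_m\colon\BR^d\to[0,1]$ with $\psi_m=1$ on $K_m$ and $\operatorname{supp}\psi_m\subseteq\{\phi>1/(2m)\}\cap\{\norm{x}<m+1\}$. Then $\psi_m\uparrow\mathbf 1_D$ pointwise, and since $d\ol l$ is a finite measure on $[0,T]$, dominated convergence yields
\begin{equation*}
\int_0^{T\wedge\tau^{\ol Z}_{\CO}}\mathbf 1(\ol Z(t)\in D)\,\md\ol l(t)
\le\int_0^T\mathbf 1_D(\ol Z(s))\,\md\ol l(s)
=\lim_{m\to\infty}\int_0^T\psi_m(\ol Z(s))\,\md\ol l(s)=0.
\end{equation*}

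The main obstacle I anticipate is the bookkeeping around the two stopping times $\tau^{X_n}_{\CO}$ and $\tau^{\ol Z}_{\CO}$, and justifying that $\int\psi\,\md l_n(\cdot\wedge\tau^{X_n}_{\CO})\to\int\psi\,\md\ol l$ when only uniform (and not variation) convergence of $l_n(\cdot\wedge\tau^{X_n}_{\CO})$ is available; this is handled by integration by parts against $C^1$ test functions and a density argument, together with the uniform boundedness of $l_n(T\wedge\tau^{X_n}_{\CO})$ guaranteed by Lemma~\ref{lemma:l-n-tight}. Once this convergence is in hand, the rest of the argument is a clean cutoff-plus-monotone-convergence procedure.
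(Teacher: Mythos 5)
Your proposal is correct, and it takes a genuinely different route from the paper's. For the support property, the paper argues ``pathwise'' on time intervals: assume $\phi(\ol Z(s))\ge\eta>0$ on a subinterval $[s_1,s_2]$; by a.s.\ uniform convergence $X_n^{\CO}\to\ol Z$, eventually $\phi(X_n^{\CO}(s))\ge\eta/2$ on $[s_1,s_2]$, so by condition (c) the integrand $\norm{f_n(X_n(s))}$ is uniformly small on $\{x\in\CK\mid\phi(x)\ge\eta/2\}$, hence $\ol l(s_2)-\ol l(s_1)=\lim_n[l_n(s_2)-l_n(s_1)]=0$. You instead test the measure $\md\ol l$ against compactly supported continuous functions inside $D$, show each test integral is the limit of the corresponding $n$-th integrals (which vanish by condition (c)), and then pass to $\mathbf 1_D$ by dominated convergence via an exhaustion of $D$. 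Both hinge on the same ingredients — condition (c) and the Skorokhod a.s.\ uniform coupling of $(X_n^{\CO},l_n(\cdot\wedge\tau^{X_n}_{\CO}))\to(\ol Z,\ol l)$ — so neither is materially shorter; the paper's version avoids the test-function machinery, while yours is closer to the standard ``support of a measure'' formulation and makes the $\dist$-to-$\pa D$ dependence implicit. One small remark: the intermediate convergence $\int\psi(X_n^{\CO})\,\md l_n(\cdot\wedge\tau^{X_n}_{\CO})\to\int\psi(\ol Z)\,\md\ol l$ doesn't really need an integration-by-parts argument; split as $\int[\psi(X_n^{\CO})-\psi(\ol Z)]\,\md l_n+\int\psi(\ol Z)\,\md l_n$, bound the first by $\norm{\psi(X_n^{\CO})-\psi(\ol Z)}_\infty\,l_n(T)\to0$, and apply Lemma~\ref{lemma:weak-like-conv} to the second. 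With that adjustment the argument is complete.
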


Combining Lemmata~\ref{lemma:L-l}, ~\ref{lemma:V-view}, ~\ref{lemma:Z-in-D}, ~\ref{lemma:prop-l}, we get that the process $\ol{Z}$ stopped at exiting $\CO$ behaves as the reflected diffusion $Z$ stopped when exiting $\CO$: that is, $\ol{Z}^{\CO} \stackrel{d}{=} Z^{\CO}$. Combining this with the following lemma, we complete the proof of Theorem~\ref{thm:1}. 

\begin{lemma} Assume for any compact $\CK \subseteq \CU$ with $\CO = \Int\CK$, every weak limit point $\ol{Z}$ of  $(X_n^{\CO})_{n \ge 1}$ in $C([0, T], \BR^d)$ satisfies $\ol{Z}^{\CO} \stackrel{d}{=} Z^{\CO}$. Then $X_n \Ra Z$ in $C([0, T], \BR^d)$ as $n \to \infty$.  
\label{lemma:convergence-final}
\end{lemma}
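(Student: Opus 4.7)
The plan is to upgrade convergence of the stopped processes $X_n^{\CO_m}$ to convergence of the full processes $X_n$ by exhausting $\CU$ with an increasing sequence of compacts and bounding the difference via the Portmanteau theorem.

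First I would fix a compact exhaustion $\CK_m \uparrow \CU$ with $\CO_m := \Int\CK_m$---for instance $\CK_m := \{x \in \CU : \dist(x,\CU^c) \ge 1/m \text{ and } \norm{x} \le m\}$---and combine the hypothesis applied with $\CK = \CK_m$ with the tightness of $(X_n^{\CO_m})$ established in Lemmas~\ref{lemma:l-n-tight} and~\ref{lemma:V-n-tight} to obtain $X_n^{\CO_m} \Ra Z^{\CO_m}$ in $C([0,T], \BR^d)$ for every fixed $m$. By the Portmanteau theorem it then suffices to prove $\EE[F(X_n)] \to \EE[F(Z)]$ for every bounded continuous $F : C([0,T], \BR^d) \to \BR$ with $|F| \le M$.

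Fix such an $F$ and an $\eps > 0$. Since $Z$ has continuous paths taking values in $\ol{D} \subseteq \CU$, the image $Z([0,T])$ is a.s.\ a compact subset of $\CU$, so $\MP(\tau^Z_{\CO_m} > T) \uparrow 1$ as $m \to \infty$; pick $m$ large enough that $\MP(\tau^Z_{\CO_m} > T) \ge 1 - \eps$, which already forces $|\EE[F(Z)] - \EE[F(Z^{\CO_m})]| \le 2M\eps$ since $Z \equiv Z^{\CO_m}$ on $[0,T]$ off this small event. The trajectory set $G := \{f \in C([0,T], \BR^d) : f(t) \in \CO_m \text{ for all } t \in [0,T]\}$ is open, and continuity of trajectories together with the fact that a stopped path is frozen on $\pa\CO_m$ after the stopping time yield $\{X_n^{\CO_m} \in G\} = \{\tau^{X_n}_{\CO_m} > T\}$ (and analogously for $Z$). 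Applying Portmanteau for open sets with $X_n^{\CO_m} \Ra Z^{\CO_m}$ then gives
\[
\liminf_{n \to \infty} \MP\bigl(\tau^{X_n}_{\CO_m} > T\bigr) \;\ge\; \MP\bigl(\tau^Z_{\CO_m} > T\bigr) \;\ge\; 1 - \eps.
\]

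On the event $\{\tau^{X_n}_{\CO_m} > T\}$ the trajectories of $X_n$ and $X_n^{\CO_m}$ coincide on $[0,T]$, so $|\EE[F(X_n)] - \EE[F(X_n^{\CO_m})]| \le 4M\eps$ for all $n$ sufficiently large. Combined with $\EE[F(X_n^{\CO_m})] \to \EE[F(Z^{\CO_m})]$ and the bound $|\EE[F(Z)] - \EE[F(Z^{\CO_m})]| \le 2M\eps$, the triangle inequality yields $\limsup_n |\EE[F(X_n)] - \EE[F(Z)]| \le 6M\eps$; letting $\eps \downarrow 0$ finishes the proof. The main subtle point I expect to be an obstacle is the very first step---upgrading the hypothesis to $X_n^{\CO_m} \Ra Z^{\CO_m}$. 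The hypothesis only identifies the stopped version $\ol{Z}^{\CO_m}$ of any weak subsequential limit $\ol{Z}$, so I would additionally need the fact that a $C([0,T], \BR^d)$-weak limit of continuous processes that are already stopped at $\pa\CO_m$ is itself a.s.\ stopped at $\pa\CO_m$ (i.e., $\ol{Z} = \ol{Z}^{\CO_m}$ a.s.); this is a short Skorohod-representation argument exploiting that $X_n^{\CO_m}$ is constant past its exit time. All other ingredients are routine.
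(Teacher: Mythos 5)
Your flagged ``main subtle point'' is exactly where the argument breaks, and your proposed fix for it is incorrect: a weak limit in $C([0,T],\BR^d)$ of processes that each coincide with their own stopped versions need \emph{not} itself be stopped, because the exit time from $\CO_m$ is not a continuous functional of the path. Concretely, take $d=1$, $\CO_m = (-1,1)$, and the deterministic tent paths $X_n(t) = (1 - 1/n)(1 - |2t-1|)$ on $[0,1]$. Each stays strictly inside $\CO_m$, so $X_n^{\CO_m} \equiv X_n$, yet the uniform limit $\ol{Z}(t) = 1 - |2t-1|$ touches $\pa\CO_m$ at $t=1/2$ and returns, so $\ol{Z} \ne \ol{Z}^{\CO_m}$. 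Consequently the hypothesis (which only pins down the law of $\ol{Z}^{\CO_m}$) combined with tightness does \emph{not} yield $X_n^{\CO_m} \Ra Z^{\CO_m}$: a single subsequential limit $\ol{Z}$ need not have the law of $Z^{\CO_m}$, and different subsequences can have genuinely different limits that agree only after stopping. This leaves the key step $\EE[F(X_n^{\CO_m})] \to \EE[F(Z^{\CO_m})]$ unjustified.

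The repair is precisely the observation the paper's own proof is organized around: the open set $G_m := \{f \in C([0,T],\BR^d) : f(t) \in \CO_m\ \forall t\}$ is \emph{stop-invariant}, meaning $\{f \in G_m\} = \{f^{\CO_m} \in G_m\}$ and, moreover, $f = f^{\CO_m}$ whenever $f \in G_m$. Working along an arbitrary convergent subsequence $X_{n_k}^{\CO_m} \Ra \ol{Z}$, Portmanteau still gives
\[
\varliminf_{k}\,\MP\bigl(X_{n_k}^{\CO_m} \in G_m\bigr) \ge \MP(\ol{Z} \in G_m) = \MP(\ol{Z}^{\CO_m} \in G_m) = \MP(Z^{\CO_m} \in G_m),
\]
and for the expectation bound one replaces ``$\EE[F(X_n^{\CO_m})] \to \EE[F(Z^{\CO_m})]$'' with
\[
\bigl|\EE[F(\ol{Z})] - \EE[F(Z^{\CO_m})]\bigr| = \bigl|\EE[F(\ol{Z})] - \EE[F(\ol{Z}^{\CO_m})]\bigr| \le 2M\,\MP(\ol{Z} \notin G_m) = 2M\,\MP(Z^{\CO_m} \notin G_m) \le 2M\eps.
\]
With these two substitutions (and arguing along subsequences rather than assuming full-sequence convergence), your $\EE[F(\cdot)]$ bookkeeping closes, with the total error $6M\eps$ as you computed. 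Modulo this fix, your route is a legitimate alternative: the paper's proof runs a Portmanteau lower bound directly on the open sets $\CG\cap\CA$ (its $\CA$ is your $G_m$) and then requires a diagonal argument to remove the dependence of the chosen subsequence on $m$, whereas testing against bounded continuous $F$ and sending $\eps \downarrow 0$ at the end lets you dispense with the diagonalization.
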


\subsection{Proof of Lemma~\ref{lemma:aux}} Assume the converse; then there exists a sequence $x_n \in \CK$ such that $\dist(x_n, \pa D) \to 0$ and $x_n \notin \CU$. Extract a convergent subsequence $(x_{n_k})_{k \ge 1}$ and let $a$ be the limit. Then $\dist(a, \pa D) = \lim\dist(x_{n_k}, \pa D) = 0$. Therefore, $a \in \pa D$. On the other hand, since $\CU$ is open, and $x_{n_k} \notin \CU$, then $a \in \BR^d\setminus\CU$. But $\pa D \subseteq \CU$. This contradiction completes the proof.

\subsection{Preliminary calculations} The function $x \mapsto r(x)/\norm{r(x)}$ is continuous on $\pa D$, and takes values on the unit sphere $\mathbb S := \{z \in \BR^d\mid\norm{z} = 1\}$ in $\BR^d$. By Tietze's extension theorem, see \cite{Folland}, we can extend it to a continuous function $\mg : \BR^d \to \BR$, which satisfies
\begin{equation}
\label{eq:def-g}
\mg(z) \equiv \frac{r(z)}{\norm{r(z)}},\ \ z \in \pa D.
\end{equation}

\begin{lemma}
\label{lemma:closedness}
Define the set $\Xi(\de, \eps) := \left\{(z, \tilde{z}) \in \tilde{\CK}(\de)\times\BR^d\mid \norm{z - \tilde{z}} < \de,\ \ \norm{f_n(z)} \ge \eps\right\}$. For every $\eps > 0$, we have the following convergence:
$$
\lim\limits_{\de \to 0}\varlimsup\limits_{n \to \infty}\sup\limits_{(z, \tilde{z}) \in \Xi(\de, \eps)}\norm{\frac{f_n(z)}{\norm{f_n(z)}} - \mg(\tilde{z})} = 0.
$$
\end{lemma}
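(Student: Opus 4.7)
The plan is to estimate $\|f_n(z)/\|f_n(z)\| - \mg(\tilde z)\|$ via the intermediate quantity $\mg(y(z))$, where $y(z)\in\pa D$ is the unique nearest boundary point supplied by Lemma~\ref{lemma:aux-ps}. Since $\mg$ coincides with $r/\|r\|$ on $\pa D$ by~\eqref{eq:def-g}, the comparison $\|f_n(z)/\|f_n(z)\| - \mg(y(z))\|\to 0$ will be delivered by the emulation hypothesis (Definition~\ref{def:direction}), while $\|\mg(y(z)) - \mg(\tilde z)\|\to 0$ will follow from uniform continuity of $\mg$ together with a bound on $\|y(z)-\tilde z\|$. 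The main point to verify is that, under the constraint $\|f_n(z)\|\ge\eps$, the point $z$ is automatically forced close to $\pa D$ for large $n$, so that $y(z)$ is defined and the emulation hypothesis is applicable.

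For this, fix $\eta\in(0,\de_{\CK}/2]$ with $\de_\CK$ as in Lemma~\ref{lemma:aux}, and note that $A_\eta := \{x\in\CK : \phi(x)\ge\eta\}$ is a compact subset of $D$. Condition (c) of Theorem~\ref{thm:1} yields $\sup_{A_\eta}\|f_n\|\to 0$, so eventually $\|f_n\|<\eps$ on $A_\eta$. Combined with $\phi(z)>-\de$ from the definition of $\tilde{\CK}(\de)$, this forces $-\de<\phi(z)<\eta$ for all sufficiently large $n$; hence $\dist(z,\pa D)<\max(\de,\eta)\le\de_\CK$, which places $z$ in $\CU_0$ with $y(z)$ well-defined. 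Introduce the compact set $\CK^* := \{x\in\CK : \dist(x,\pa D)\le\de_\CK/2\}$, which is contained in $\CU_0$ by Lemma~\ref{lemma:aux}. For $\de,\eta\le\de_\CK/2$, any such $z$ lies in $\CK^*(\max(\de,\eta))$, so Definition~\ref{def:direction} together with~\eqref{eq:def-g} gives
\[
\lim_{\de\to 0}\,\varlimsup_{n\to\infty}\,\sup_{(z,\tilde z)\in\Xi(\de,\eps)}\Bigl\|\tfrac{f_n(z)}{\|f_n(z)\|} - \mg(y(z))\Bigr\| = 0.
\]

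To conclude, I set $\eta=\de$ and bound
\[
\|y(z)-\tilde z\|\;\le\;\|y(z)-z\| + \|z-\tilde z\|\;=\;|\phi(z)| + \|z-\tilde z\|\;<\;2\de.
\]
Both $\tilde z$ and $y(z)$ lie in the bounded region $\{x:\dist(x,\CK)\le 1\}\cup y(\CK^*)$ (for $\de\le 1$), on which the continuous function $\mg$ is uniformly continuous; thus $\sup_{(z,\tilde z)\in\Xi(\de,\eps)}\|\mg(y(z))-\mg(\tilde z)\|\to 0$ as $\de\to 0$, uniformly in $n$. The triangle inequality then yields the lemma. The only delicate point is the bookkeeping needed to keep all relevant points inside $\CU_0$ so that $y(\cdot)$ stays defined throughout; this is handled by Lemma~\ref{lemma:aux} and Lemma~\ref{lemma:aux-ps}.
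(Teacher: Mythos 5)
Your proof is correct and follows essentially the same route as the paper: use condition (c) to force $\phi(z)$ small for large $n$ under the constraint $\norm{f_n(z)}\ge\eps$, insert $\mg(y(z))$ as an intermediate point, then control the first piece by the emulation hypothesis (Definition~\ref{def:direction}) and the second by uniform continuity of $\mg$ on a compact set. If anything, your bookkeeping of where $\tilde z$ lives (allowing $\tilde z\notin\CK$ by enlarging the compact set) is slightly more careful than the paper's statement that $S_2(\de)$ is taken over $z',\tilde z\in\CK$.
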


\begin{proof} There exists an $n_0 > 0$ such that for $n \ge n_0$, if $(z, \tilde{z}) \in \Xi(\de, \eps)$, then we have: $\phi(z) < \de$. This follows from uniform convergence $f_n \to 0$ on the set $\{x \in \CK\mid \phi(x) \ge \de\}$ (which is a compact subset of $D$). In the rest of the proof, we let $n \ge n_0$. Take $(z, \tilde{z}) \in \Xi(\de, \eps)$. Then $\phi(z) > -\de$, since $z \in \tilde{\CK}(\de)$. Therefore, $\dist(z, \pa D) = |\phi(z)| < \de$. There exists a $z'  := y(z) \in \pa D$ such that $\norm{z - z'} < \de$, and $\norm{\tilde{z} - z'} \le \norm{\tilde{z} - z} + \norm{z - z'} < 2\de$. Therefore, 
\begin{align}
\label{eq:two-sups}
\begin{split}
\sup\limits_{(z, \tilde{z}) \in \Xi(\de, \eps)}\norm{\frac{f_n(z)}{\norm{f_n(z)}} - \mg(\tilde{z})} &\le S_1(n, \de) + S_2(\de) \\ & := \sup\norm{\frac{f_n(z)}{\norm{f_n(z)}} - \mg(y(z))} + 
\sup\norm{\mg(z') - \mg(\tilde{z})},
\end{split}
\end{align}
where the first supremum $S_1(n, \de)$ is taken over all $z \in \CK(\de)$ such that $\norm{f_n(z)} \ge \eps$, and the second supremum $S_2(\de)$ is taken over all $z', \tilde{z} \in\CK$ such that $\norm{z' - \tilde{z}} < 2\de$. The supremum $S_2(\de)$ is independent of $n$ and tends to zero as $\de \to 0$, because the function $\mg$ is uniformly continuous on the compact set $\CK$. By the assumption (b) of Theorem~\ref{thm:1}, $\varlimsup_{n \to \infty}\lim_{\de \to 0}S_1(n, \de) = 0$. This completes the proof.
\end{proof}

Recall $\delta_{\CK}$ from Lemma~\ref{lemma:aux}. Take a function $\psi : \BR \to \BR$ which is $C^2$, nondecreasing, strictly increasing on $[-\de_{\CK}, \de_{\CK}]$, and
$$
\psi(x) = 
\begin{cases}
x,\ \ |x| \le \de_{\CK}/2;\\
-\de_{\CK},\ x \le - \de_{\CK};\\
\de_{\CK},\ x \ge \de_{\CK}.
\end{cases}
$$
The function $\psi\circ\phi$ is $C^2$ on the neighborhood $\CU_0$. In addition, the function $\psi\circ\phi$ is $C^2$ on $\{x \in \BR^d\mid \phi(x) > \de_{\CK}\}$. But 
$\CK \subseteq \{x \in \BR^d\mid \phi(x) > \de_{\CK}\} \cup \CU_0$. Therefore, the function $\psi\circ\phi$ is $C^2$ on a neighborhood of $\CK$. Apply this function to the process $X_n$ until it exits $\CK$. By It\^o's formula for $Y_n(t) \equiv \psi(\phi(X_n(t)))$, $t < \tau^{X_n}_{\CK}$, we get:
\begin{equation}
\label{eq:ito}
\md Y_n(t) = \al_n(Y_n(t))\,\md t + \md M_n(t),
\end{equation}
Here, we let
\begin{equation}
\label{eq:al-n}
\al_n(x) := \nabla(\psi\circ\phi)(x)\cdot\left[f_n(x) + b(x)\right] + \frac12\tr(A_n(x)D^2(\psi\circ\phi)(x)),
\end{equation}
and $M_n = (M_n(t), t \ge 0)$ is a certain continuous local $(\CF_t)_{t \ge 0}$-martingale, with 
\begin{align}
\label{eq:M-n}
\begin{split}
\langle M_n\rangle_t &= \int_0^{t\wedge\tau_{\CK}^{X_n}}\be^2_n(X_n(s))\,\md s,\ \ t \ge 0,\\ 
\be_n(x) &:= \left[\left(\nabla(\psi\circ\phi)(x)\right)'A_n(x)\nabla(\psi\circ\phi)(x)\right]^{1/2}. 
\end{split}
\end{align}
 From~\eqref{eq:ito} and~\eqref{eq:M-n}, for a one-dimensional Brownian motion $B_n = (B_n(s), s \ge 0)$, we get:
\begin{equation}
\label{eq:Y-n-main}
\psi\left(\phi\left(X_n^{\CK}(t)\right)\right) = \psi(\phi(z_n)) + \int_0^{t\wedge\tau^{\CK}_{X_n}}\al_n(X_n(s))\,\md s + \int_0^{t\wedge\tau^{\CK}_{X_n}}\be_n(X_n(s))\,\md B_n(s),
\end{equation}
We have: $\phi \in C^2(\CU_0)$. But $\psi\circ\phi(x) \equiv \phi(x)$ on $\CK(\de_{\CK}/2)
\subseteq\CU_0$. Therefore, $\phi$ and $\psi\circ\phi$ have the same first and second derivatives on an open set $\CO(\de_{\CK}/2) \subseteq \CK(\de_{\CK}/2)$. For $x \in \CO(\de_{\CK}/2)$, we get:
$$
\al_n(x) = \nabla\phi(x)\cdot\left[f_n(x) + b(x)\right] + \frac12\tr(A_n(x)D^2\phi(x)),\ \ \be_n(x) = \left[\left(\nabla\phi(x)\right)'A_n(x)\nabla\phi(x)\right]^{1/2}. 
$$
Let us prove some estimates for functions $\al_n$ and $\be_n$. Recall the definition of 
$\cm_{\CK, n}$ from~\eqref{eq:m-k-n}.

\begin{lemma} There exist constants $c_1, c_2 > 0$, $\de_0 \in (0, \de_{\CK}/2)$, $n_0 = 1, 2, \ldots$ such that:
$$
\al_n(x) \ge c_1\norm{f_n(x)} - c_2 \ge c_1\cm_{\CK, n}(\phi(x)) - c_2,\ \ \mbox{for}\ \ n \ge n_0,\ \ x \in \CO(\de_0). 
$$    
\label{lemma:prop-alpha}
\end{lemma}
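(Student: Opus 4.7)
The plan is to work on $\CO(\de_0)$ with $\de_0<\de_{\CK}/2$, where, as already noted just before the lemma, $\nabla(\psi\circ\phi)=\nabla\phi$ and $D^2(\psi\circ\phi)=D^2\phi$, so
$$\al_n(x)=\nabla\phi(x)\cdot f_n(x)+\nabla\phi(x)\cdot b(x)+\tfrac12\tr(A_n(x)D^2\phi(x)).$$
Since $b$ is continuous on $\CU$, the functions $\nabla\phi$ and $D^2\phi$ are continuous on $\CU_0$, and $A_n=\si_n\si_n'\to A$ locally uniformly on $\CU$ by condition (d) of Theorem~\ref{thm:1}, the last two summands are uniformly bounded on $\CO(\de_0)$ for all $n$ sufficiently large; call this bound $c_b$. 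It therefore suffices to prove that for appropriate $c_1>0$, $\de_0>0$, and $n_0$, one has $\nabla\phi(x)\cdot f_n(x)\ge c_1\norm{f_n(x)}-\tilde c$ on $\CO(\de_0)$ for $n\ge n_0$, and then set $c_2:=c_b+\tilde c$.

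I would fix $\eps>0$ and split the estimate on $f_n$ according to whether $\norm{f_n(x)}<\eps$ or $\norm{f_n(x)}\ge\eps$. In the first regime the trivial bound $\nabla\phi(x)\cdot f_n(x)\ge-\norm{\nabla\phi(x)}\,\norm{f_n(x)}$, together with boundedness of $\norm{\nabla\phi}$ on the compact $\ol{\CO(\de_0)}\subseteq\CU_0$, gives the desired form with $\tilde c$ proportional to $\eps$. The real work is the regime $\norm{f_n(x)}\ge\eps$, where Definition~\ref{def:direction} applies: given $\eta>0$, for $\de_0$ small and $n$ large I have
$$\norm{\frac{f_n(x)}{\norm{f_n(x)}}-\frac{r(y(x))}{\norm{r(y(x))}}}<\eta,$$
so up to an error of order $\eta\norm{f_n(x)}$ the target reduces to $\norm{f_n(x)}\cdot\nabla\phi(x)\cdot r(y(x))/\norm{r(y(x))}$.

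The heart of the argument is extracting a uniform positive constant from this last inner product. At any $y\in\pa D$, since $\phi$ is the signed distance positive inside $D$, one has $\nabla\phi(y)=\fn(y)$; combined with Assumption 1 ($\fn\cdot r\equiv 1$) this gives $\nabla\phi(y(x))\cdot r(y(x))/\norm{r(y(x))}=1/\norm{r(y(x))}$. Cauchy--Schwarz applied to $\fn\cdot r=1$ forces $\norm{r(y)}\ge 1$, and continuity of $r$ on the compact $\pa D\cap\CK$ gives a uniform upper bound $\norm{r(y)}\le M$, so $1/\norm{r(y(x))}\in[1/M,1]$. Continuity of $\nabla\phi$ on $\CU_0$ together with $\norm{x-y(x)}=|\phi(x)|<\de_0$ lets me (after shrinking $\de_0$) replace $\nabla\phi(y(x))$ by $\nabla\phi(x)$ at the cost of another $\eta$-error. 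Choosing $\eta:=1/(4M)$ and setting $c_1:=1/(2M)$ delivers $\nabla\phi(x)\cdot f_n(x)\ge c_1\norm{f_n(x)}$ in this regime, and combining with the first regime yields the first inequality of the lemma. The second inequality $\norm{f_n(x)}\ge\cm_{\CK,n}(\phi(x))$ is immediate from the definition~\eqref{eq:m-k-n}, since $x\in\CO(\de_0)\subseteq\CK$.

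The step I expect to be most delicate is producing a uniform $c_1$: the emulation condition only controls the direction of $f_n$ where $\norm{f_n(x)}\ge\eps$, and one must coordinate three compactness-and-continuity arguments so that a single choice of $\de_0$ and $n_0$ makes all error terms simultaneously small --- (a) the direction-emulation error, (b) the gap $\nabla\phi(x)-\nabla\phi(y(x))$, and (c) the uniform lower bound $1/\norm{r(y)}\ge 1/M$ on $\pa D\cap\CK$. Assumption 1 enters twice in this coordination: to evaluate $\nabla\phi(y)\cdot r(y)=1$ at boundary points, and to force $\norm{r}\ge 1$ so that the denominator cannot blow up.
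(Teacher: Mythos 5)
Your proposal is correct and takes essentially the same approach as the paper: reduce to the term $\nabla\phi(x)\cdot f_n(x)$, split on the magnitude of $\norm{f_n(x)}$, and for the large regime combine the emulation condition with the boundary computation $\nabla\phi(y)\cdot r(y)/\norm{r(y)}=1/\norm{r(y)}$ to obtain a uniform positive lower bound. The only organizational difference is that the paper routes the emulation condition through the Tietze extension $\mg$ of $r/\norm{r}$ (via Lemma~\ref{lemma:closedness}) and obtains positivity of $\nabla\phi\cdot\mg$ on $\pa D\cap\CK$ by a compactness argument, whereas you work directly with $r(y(\cdot))$, get the explicit bound $1/\norm{r}\ge 1/M$ via Cauchy--Schwarz, and pay one extra uniform-continuity step to pass from $\nabla\phi(y(x))$ to $\nabla\phi(x)$.
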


\begin{proof} The second inequality automatically follows from the definition of 
$\cm_{\CK, n}$ from~\eqref{eq:m-k-n}. Let us show the first inequality. The matrix-valued function $D^2(\psi\circ\phi)$, as well as the vector-valued functions $b$ and $\nabla(\psi\circ\phi)$, are continuous on the compact set $\CK$, and are therefore bounded on it. In addition, $A_n \to A$ uniformly on $\CK$, and $A$ is continuous (therefore bounded) on $\CK$. Hence the sequence $(A_n)_{n \ge 1}$ of matrix-valued functions is uniformly bounded on $\CK$. By~\eqref{eq:al-n}, it suffices to prove the statement of Lemma~\ref{lemma:prop-alpha} for $\nabla(\psi\circ\phi)(x)\cdot f_n(x)$ instead of $\al_n(x)$. For $x \in \CO(\de_{\CK}/2)$,
\begin{equation}
\label{eq:f-n-dot}
\nabla(\psi\circ\phi)(x)\cdot f_n(x) = \norm{f_n(x)}\nabla\phi(x)\cdot\left(\frac{f_n(x)}{\norm{f_n(x)}} - \mg(x)\right) + \norm{f_n(x)}\nabla\phi(x)\cdot \mg(x).
\end{equation}
For $x \in \CK\cap\pa D$, the vector $\nabla\phi(x)$ has the same direction as $\fn(x)$ (the inward unit normal vector to $\pa D$ at the point $x$), because $\phi$ is the signed distance function. Also, $\mg(x) = r(x)/\norm{r(x)}$, where $r(x)\cdot\fn(x) = 1$. Therefore, $\nabla\phi(x)\cdot \mg(x) > 0$. Because $\nabla\phi\cdot \mg$ is a continuous function on $\CK\cap\pa D$, there exists a constant $c_1 > 0$ such that 
\begin{equation}
\label{eq:nabla-phi-g-1}
\nabla\phi(x)\cdot \mg(x) \ge 3c_1\ \mbox{for}\ \ x \in \CK\cap\pa D.
\end{equation} 

Now, the set $\CK(\de_{\CK}/2)$ is compact, and therefore the function $\nabla\phi\cdot \mg$ is uniformly continuous on this set. Comparing it with~\eqref{eq:nabla-phi-g-1}, we get: There exists a neighborhood $\tilde{\CU}$ of $\CK\cap\pa D$ such that $\nabla\phi(x)\cdot \mg(x) \ge 2c_1$ for $x \in \tilde{\CU}\cap\CK(\de_{\CK}/2)$. There exists a $\tilde{\de} > 0$ such that $\CO(\tilde{\de}) \subseteq \CK(\tilde{\de}) \subseteq \tilde{\CU}$. (The proof of this is similar to that of Lemma~\ref{lemma:aux}.) Take $\de_1 \in (0, \tilde{\de}\wedge(\de_{\CK}/2))$. Then
\begin{equation}
\label{eq:nabla-phi-g-2}
\nabla\phi(x)\cdot \mg(x) \ge 2c_1\ \ \mbox{for}\ \ x \in \CO(\de_1).
\end{equation}
Let $\eps_0 := c_1\bigl(\max_{x \in \CK(\de_{\CK})}\norm{\nabla\phi(x)}\bigr)^{-1} > 0$. Applying Lemma~\ref{lemma:closedness} to $z = \tilde{z} = x$, $\eps := 1$, we have: there exists a $\de_0 \in (0, \de_1)$ and an $n_1$ such that for $n \ge n_1$ and $x \in \CO(\de_0)$, 
\begin{equation}
\label{eq:f-n-g-2}
\mbox{if}\ \ \norm{f_n(x)} \ge 1,\ \ \mbox{then we have:}\ \ \norm{\frac{f_n(x)}{\norm{f_n(x)}} - \mg(x)} \le \eps_0.
\end{equation}
Combining~\eqref{eq:f-n-dot}, \eqref{eq:nabla-phi-g-2}, \eqref{eq:f-n-g-2}, we get: for $x \in \CO(\de_0)$ and $n \ge n_1$, 
\begin{equation}
\label{eq:almost-there-1}
\mbox{if}\ \ \norm{f_n(x)} \ge 1,\ \ \mbox{we have:}\ \ \nabla(\psi\circ\phi)(x)\cdot f_n(x) = \nabla\phi(x)\cdot f_n(x) \ge c_1\norm{f_n(x)}.
\end{equation}
By Cauchy-Schwartz inequality, if $\norm{f_n(x)} \le 1$, then
\begin{equation}
\label{eq:almost-there-2}
\nabla(\psi\circ\phi)(x)\cdot f_n(x) \ge -c_2,\ \ c_2 := \max\limits_{x \in \CK}\norm{\nabla(\psi\circ\phi)(x)}.
\end{equation}
Combining~\eqref{eq:almost-there-1} and~\eqref{eq:almost-there-2}, we complete the proof. 
\end{proof}

\begin{lemma}
\label{lemma:prop-beta}
There exist constants $c_3, c_4 > 0$ and $n_1$ such that for $n \ge n_1$, if $x \in \CO(\de_0)$, where $\de_0$ is taken from Lemma~\ref{lemma:prop-alpha}, we have: $c_3 \le \be_n(x)$, and for all $x \in \CK$, we have: $\be_n(x) \le c_4$.  
\end{lemma}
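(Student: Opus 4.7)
The plan is to split the estimate into the upper bound on $\CK$ and the lower bound on $\CO(\de_0)$, using in both cases the local uniform convergence $A_n \to A$ (which follows from assumption (d) of Theorem~\ref{thm:1}, since $A_n = \si_n\si_n'$) together with continuity/positive-definiteness properties of $A$ and $\nabla(\psi\circ\phi)$.

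For the upper bound, I would note that $\nabla(\psi\circ\phi)$ has already been shown (in the preliminary calculations above) to be continuous on a neighborhood of $\CK$, hence bounded on $\CK$ by some constant $M$. Uniform convergence of $\si_n$ to $\si$ on $\CK$ gives uniform convergence of $A_n$ to the continuous matrix field $A$; in particular the operator norms $\|A_n(x)\|_{\text{op}}$ are uniformly bounded on $\CK$ for $n \ge n_1$ by some $C$. A straightforward Cauchy--Schwarz estimate then yields
\[
\be_n(x)^2 \;\le\; \|\nabla(\psi\circ\phi)(x)\|^{2}\,\|A_n(x)\|_{\text{op}} \;\le\; M^{2}C,
\]
so one may take $c_4 := M\sqrt{C}$.

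For the lower bound, the key observation is that on $\CO(\de_0) \subseteq \CK(\de_{\CK}/2) \subseteq \CU_0$ the function $\psi\circ\phi$ coincides with $\phi$, so $\nabla(\psi\circ\phi)(x) = \nabla\phi(x)$, and since $\phi$ is the signed distance to $\pa D$ on $\CU_0$ (Lemma~\ref{lemma:aux-ps}) we have $\|\nabla\phi(x)\| = 1$ identically. For each $x \in \CK\cap\pa D \subseteq \ol{D}$, the matrix $A(x) = \si(x)\si(x)'$ is symmetric positive definite, so its smallest eigenvalue is a positive continuous function on the compact set $\CK\cap\pa D$, bounded below by some $2\la_0 > 0$. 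By continuity of $A$ on $\CU$ and a standard compactness argument (similar to Lemma~\ref{lemma:aux}), this lower bound extends to a neighborhood of $\CK\cap\pa D$; shrinking $\de_0$ from Lemma~\ref{lemma:prop-alpha} if necessary, we may assume $\la_{\min}(A(x)) \ge \la_0$ on $\CO(\de_0)$. Hence
\[
(\nabla\phi(x))'A(x)\nabla\phi(x) \;\ge\; \la_0\,\|\nabla\phi(x)\|^{2} \;=\; \la_0.
\]
Uniform convergence $A_n \to A$ on $\CK$ then gives $(\nabla\phi(x))'A_n(x)\nabla\phi(x) \ge \la_0/2$ for all $x \in \CO(\de_0)$ once $n \ge n_1$ (enlarging $n_1$ if needed), so $\be_n(x) \ge c_3 := \sqrt{\la_0/2}$.

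The main obstacle to watch for is that Assumption 2 only guarantees nonsingularity of $\si$ on $\ol{D}$, not on the whole neighborhood $\CU$, so $A$ may fail to be positive definite at points of $\CO(\de_0) \setminus \ol{D}$. This is handled exactly by the shrinking of $\de_0$: since $\CO(\de_0)$ consists of points within Euclidean distance $\de_0$ of $\pa D$, continuity of $A$ together with strict positivity on the compact set $\CK\cap\pa D$ forces positive definiteness on $\CO(\de_0)$ for all sufficiently small $\de_0$. Everything else is continuity plus the uniform convergence already assumed.
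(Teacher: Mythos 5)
Your proof is correct and follows the same two-part structure as the paper's (a Cauchy--Schwarz bound on $\CK$ for $c_4$, and uniform ellipticity near $\pa D$ for $c_3$), but with two refinements worth flagging. First, for the lower bound on $\norm{\nabla(\psi\circ\phi)}$ you invoke the eikonal identity $\norm{\nabla\phi}\equiv 1$ on $\CU_0$, whereas the paper only observes that $\nabla(\psi\circ\phi)\ne 0$ on the compact set $\CK' := \{x\in\CK\mid|\phi(x)|\le\de_0\}$ and then takes the (positive) minimum $d_1$ by continuity; both give what is needed, yours being sharper. Second, and more substantively, the paper's proof states without remark that ``for every $x \in \CK$, the matrix $A(x)$ is positive definite,'' even though nonsingularity of $\si$ is assumed only on $\ol{D}$ and $\CK$ is an arbitrary compact subset of $\CU$ (indeed, the sets $\CK_m$ used in the proof of Lemma~\ref{lemma:convergence-final} certainly stray outside $\ol{D}$). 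You correctly identify this as an implicit assumption, observe that positive definiteness is known on the compact set of boundary points near $\CK$ and propagates by continuity to a one-sided collar, and shrink $\de_0$ so that $\CO(\de_0)$ lies in that collar; since the lemma asks for the lower bound only on $\CO(\de_0)$, and since Lemma~\ref{lemma:prop-alpha} remains valid after shrinking $\de_0$, this is exactly the right fix. The compactness step you invoke informally does need a moment's care (the foot point $y(x)$ of $x\in\CK(\de_0)$ need not lie in $\CK\cap\pa D$, only in $\pa D$ within distance $\de_0$ of $\CK$), but that is handled precisely as you indicate, by the same boundedness argument as Lemma~\ref{lemma:aux}. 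In short, your argument is correct and slightly tightens the paper's.
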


\begin{rmk} Without loss of generality, we can take $n_1$ to be the same as $n_0$ from Lemma~\ref{lemma:prop-beta}. 
\end{rmk} 

\begin{proof} From the proof of Lemma~\ref{lemma:prop-alpha}, we get: $\nabla(\psi\circ\phi)(x) \ne 0$ on $\CK' := \{x \in \CK\mid |\phi(x)| \le \de_0\}$, and $\nabla(\psi\circ\phi)$ is continuous on $\CK$. Therefore, 
\begin{equation}
\label{eq:d-3-4}
d_1 := \min\limits_{x \in \CK'}\norm{\nabla(\psi\circ\phi)(x)} > 0,\ \ d_2 := \max\limits_{x \in \CK}\norm{\nabla(\psi\circ\phi)(x)} < \infty.
\end{equation}
Note that $\CO(\de_0) \subseteq \CK'$. The matrix-valued function $A(x)$ is continuous on $\CK$, and for every $x \in \CK$, the matrix $A(x)$ is positive definite and symmetric. Also, $A_n \to A$ uniformly on $\CK$. Therefore, there exist $d_3, d_4 > 0$ and an $n_1$ such that for every $\xi \in \BR^d$, $x \in \CK$, $n \ge n_1$, 
\begin{equation}
\label{eq:d-5-6}
d_3\norm{\xi}^2 \le \xi'A_n(x)\xi \le d_4\norm{\xi}^2.
\end{equation}
To complete the proof, combine~\eqref{eq:d-3-4} and~\eqref{eq:d-5-6} and let $c_3 := d_1d_3^{1/2}$, $c_4 := d_2d_4^{1/2}$.
\end{proof}

\subsection{Proof of Lemma~\ref{lemma:V-n-tight}} For $s \le \tau^{X_n}_{\CO}$, we have:  $X_n(s) \in \CO \subseteq \CK$. Because the function $\si$ is continuous on $\CK$, and $\si_n \to \si$ uniformly on $\CK$, we have: 
$$
\sup\limits_{\substack{n \ge 1\\x \in \CO}}\norm{\si_n(x)} \le \sup\limits_{\substack{n \ge 1\\x \in \CK}}\norm{\si_n(x)} =: C_{\si}< \infty.
$$
Therefore, a.s. for all $n = 1, 2, \ldots$ and $t \in [0, T]$, we have:
$$
\norm{1\left(t \le \tau_{\CO}^{X_n}\right)\si_n(X_n(t))} \le C_{\si} < \infty.
$$
From \cite[Lemma 7.4]{MyOwn6}, the following sequence of processes is tight in $C([0, T], \BR^d)$:
$$
t \mapsto \int_0^{t\wedge\tau^{X_n}_{\CO}}\si_n(X_n(s))\,\md W_n(s) \equiv \int_0^t1\left(s \le \tau_{\CO}^{X_n}\right)\si_n\left(X_n\left(s\wedge\tau_{\CO}^{X_n}\right)\right)\,\md W_n(s),\ \ n = 1, 2, \ldots
$$
Also, the function $b$ is continuous and therefore bounded on $\CK$. Therefore, 
$$
\sup\limits_{\substack{n \ge 1\\0 \le t \le T}}\norm{b(X_n(t))1(t \le \tau^{X_n}_{\CO})} < \infty,
$$
and by the Arzela-Ascoli theorem the following sequence is tight in $C([0, T], \BR^d)$: 
$$
t \mapsto \int_0^{t\wedge\tau_{\CO}^{X_n}}b(X_n(s))\,\md s \equiv \int_0^t
1\left(s \le \tau_{\CO}^{X_n}\right)b\left(X_n\left(s\wedge\tau_{\CO}^{X_n}\right)\right)\md s,\ \ n = 1, 2, \ldots
$$
It suffices to note that $z_n \to z_0$, and to use the definition of $V_n(t)$.

\subsection{Proof of Lemma~\ref{lemma:l-n-tight}} First, let us prove a simple auxillary lemma.

\begin{lemma}
\label{lemma:phi-Lip}
The function $\phi$ satisfies $|\phi(x) - \phi(y)| \le \norm{x - y}$ for all $x, y \in \BR^d$. 
\end{lemma}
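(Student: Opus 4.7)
The plan is to do a short case analysis based on the signs of $\phi(x)$ and $\phi(y)$, combined with the standard fact that the distance function to any nonempty set is $1$-Lipschitz.

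First I would handle the easy case when $\phi(x) \le \phi(y)$, i.e.\ when the left-hand side is non-positive: then $\phi(x) - \phi(y) \le 0 \le \norm{x-y}$ and there is nothing to prove. So it suffices to assume $\phi(x) > \phi(y)$. If $x$ and $y$ both lie in $\ol{D}$, then $\phi(x) = \dist(x, \pa D)$ and $\phi(y) = \dist(y, \pa D)$, and $\phi(x) - \phi(y) \le \norm{x-y}$ is just the usual $1$-Lipschitz property of the distance function to the closed set $\pa D$. Symmetrically, if both $x, y \in \BR^d \setminus D$, then $\phi(x) = -\dist(x, \pa D)$ and $\phi(y) = -\dist(y, \pa D)$, and the same $1$-Lipschitz property gives the bound.

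The one remaining case is $x \in D$ and $y \in \BR^d \setminus \ol{D}$, where $\phi(x) > 0 > \phi(y)$. Here I would use the key geometric observation: because $D$ is open and $\BR^d\setminus\ol{D}$ is open, and the line segment $[x,y]$ is connected, this segment must meet $\pa D$ at some point $z$. Since $z \in \pa D$, we have $\phi(x) = \dist(x, \pa D) \le \norm{x - z}$, and $-\phi(y) = \dist(y, \pa D) \le \norm{y - z}$. Adding these and using that $z$ lies on the segment from $x$ to $y$ yields
\begin{equation*}
\phi(x) - \phi(y) \le \norm{x - z} + \norm{z - y} = \norm{x - y}.
\end{equation*}

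No step here is really an obstacle; the only mild subtlety is remembering to invoke connectedness of the segment to produce the crossing point $z \in \pa D$ in the mixed case. Swapping the roles of $x$ and $y$ in the statement gives the usual two-sided Lipschitz estimate $|\phi(x) - \phi(y)| \le \norm{x-y}$, though the lemma as stated only requires the one-sided inequality.
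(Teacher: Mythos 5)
Your proof is correct and follows essentially the same three-case strategy as the paper (both in $\ol D$, both outside $D$, and the mixed case using a crossing point on the segment via connectedness). The only small difference is your upfront reduction to $\phi(x)>\phi(y)$, which tidily disposes of the fourth configuration $x\notin\ol D$, $y\in D$ that the paper leaves implicit.
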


\begin{proof} It is sufficient to prove $\phi(x) - \phi(y) \le \norm{x-y}$; then $\phi(y) - \phi(x) \le \norm{x-y}$ will follow automatically, and together these will give the required statement. We consider three cases:

\smallskip

{\it Case 1.} $x, y \in \ol{D}$. Then $\phi(x) - \phi(y) = \dist(x, \pa D) - \dist(y, \pa D) \le \norm{x-y}$, because the function $\dist(\cdot, \pa D)$ is $1$-Lipschitz.

\smallskip

{\it Case 2.} $x, y \in \BR^d\setminus D$. Then similarly $\phi(x) - \phi(y) = -\dist(x, \pa D) + \dist(y, \pa D) \le \norm{x-y}$.

\smallskip

{\it Case 3.} $x \in \ol{D}$, $y \in \BR^d\setminus D$. Take a $z \in [x, y]\cap\pa D$. Then $\phi(x) = \dist(x, \pa D) \le \norm{x - z}$, and $\phi(y) = -\dist(x, \pa D) \ge -\norm{y-z}$; therefore, $\phi(x) - \phi(y) \le \norm{x-z} + \norm{y-z} = \norm{x - y}$.
\end{proof}

Apply the  function $\phi$ from~\eqref{eq:phi} to the process $X_n(t)$ for $t \le \tau^{X_n}_{\CO}\wedge T$. 

\begin{lemma} The sequence $\left(\phi\left(X_n\left(\cdot\wedge\tau^{X_n}_{\CO}\right)\right)\right)_{n \ge 1}$ is tight in $C[0, T]$. 
\label{lemma:phi-X-n-tight}
\end{lemma}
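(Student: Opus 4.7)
The strategy is to apply It\^o's formula to the smoothed signed distance $Y_n(t) := \psi(\phi(X_n(t)))$ via the representation~\eqref{eq:Y-n-main} and then transfer tightness of $Y_n^{\CO}$ back to $\phi(X_n^{\CO})$. First I would note that the stopped process $Y_n^{\CO}$ is bounded by $\de_{\CK}$, and that Lemma~\ref{lemma:prop-beta} gives the uniform bound $\be_n \le c_4$ on $\CK$, so the martingale part $M_n^{\CO}$ has predictable quadratic variation at most $c_4^2 T$ and is tight in $C[0,T]$ by Burkholder--Davis--Gundy. Lemma~\ref{lemma:prop-alpha} together with condition~(c) of Theorem~\ref{thm:1} (giving $f_n\to 0$ locally uniformly in $D$) yields a uniform lower bound $\al_n \ge -C$ on $\CK$ for all large $n$; consequently $\tilde A_n(t) := A_n^{\CO}(t) + C(t\wedge\tau_{\CO}^{X_n})$ is nondecreasing, and from $Y_n^{\CO} = Y_n(0) + A_n^{\CO} + M_n^{\CO}$ combined with $|Y_n^{\CO}|\le \de_{\CK}$, the total increase $\tilde A_n(T)$ is bounded in $L^2$ uniformly in $n$.

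The second step is to deduce tightness of the semimartingale $Y_n^{\CO}$ in $C[0,T]$ via the Aldous--Rebolledo criterion. For any pair of stopping times $\sigma_n\le\rho_n\le(\sigma_n+\de)\wedge T$, boundedness of $Y_n^{\CO}$ forces the drift increment to satisfy $|A_n^{\CO}(\rho_n) - A_n^{\CO}(\sigma_n)|\le 2\de_{\CK} + |M_n^{\CO}(\rho_n) - M_n^{\CO}(\sigma_n)|$, while $\langle M_n^{\CO}\rangle_{\rho_n} - \langle M_n^{\CO}\rangle_{\sigma_n}\le c_4^2\de$. Choosing $\de_{\CK}$ itself smaller than the target modulus, which Lemma~\ref{lemma:aux} permits because any smaller $\de_{\CK}$ still satisfies $\CK(\de_{\CK})\subseteq\CU_0$, and then sending $\de\to 0$ yields tightness of $Y_n^{\CO}$. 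To pass from tightness of $Y_n^{\CO}$ to tightness of $\phi(X_n^{\CO})$, I would split sample paths by the value of $\phi(X_n^{\CO})$: where $|\phi(X_n^{\CO})|\le\de_{\CK}/2$ one has $\phi = \psi\circ\phi$, so the modulus is inherited; on the deep interior $\{\phi(X_n^{\CO})>\de_{\CK}/2\}$ the drift $f_n$ vanishes uniformly, so $X_n$ solves an SDE with uniformly bounded coefficients and Kolmogorov's criterion (with $\phi$ being $1$-Lipschitz by Lemma~\ref{lemma:phi-Lip}) gives the tightness; finally the deep exterior $\{\phi(X_n^{\CO})<-\de_{\CK}/2\}$ is asymptotically empty by the asymptotic-in-$\ol{D}$ hypothesis~(a) of Theorem~\ref{thm:1}.

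The main obstacle is the Aldous--Rebolledo verification: the boundedness-of-$Y_n^{\CO}$ estimate introduces the constant $2\de_{\CK}$, which does not shrink with $\de$. Overcoming this forces one to choose $\de_{\CK}$ small depending on the target modulus $\eta$, and care must be taken that the constants $c_1,c_2,c_3,c_4$ in Lemmata~\ref{lemma:prop-alpha}--\ref{lemma:prop-beta} remain usable as $\de_{\CK}$ shrinks, since $\psi''$ scales like $1/\de_{\CK}^2$ and hence affects the lower bound on $\al_n$ outside the boundary layer.
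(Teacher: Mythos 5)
Your route is genuinely different from the paper's. The paper's proof of Lemma~\ref{lemma:phi-X-n-tight} does not use It\^o's formula at all: it works directly with the decomposition $X_n = V_n + L_n$, the one-sided Lipschitz bound $\phi(x) - \phi(y) \le \norm{x-y}$ of Lemma~\ref{lemma:phi-Lip}, the asymptotic-in-$\ol{D}$ event, and the intermediate value theorem to locate, inside any $\eps$-sized upward excursion of $\phi(X_n^{\CO})$, a sub-excursion of size $\eps/4$ that occurs entirely in the compact set $\CK' = \{\phi \ge \eps/4\} \subseteq D$. On $\CK'$ the penalty drift $f_n$ vanishes uniformly, so the displacement of $X_n$ there is controlled by $\oa(V_n^{\CO},[0,T],\de)$, which is small by Lemma~\ref{lemma:V-n-tight}; but the one-sided Lipschitz bound says that displacement is at least $\eps/4$, a contradiction. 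The key point is that the excursion is localized to the region where $f_n$ is already small.

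Your Aldous--Rebolledo argument, by contrast, tries to control the drift increment $A_n^{\CO}(\rho_n) - A_n^{\CO}(\sigma_n)$ globally, and the bound you obtain, $2\de_{\CK} + \abs{M_n^{\CO}(\rho_n) - M_n^{\CO}(\sigma_n)}$, does not shrink as $\de \to 0$ for a fixed $\de_{\CK}$. This is not a technical inconvenience but the heart of the difficulty: the drift $\al_n$ can be arbitrarily large (of order $\norm{f_n}$) in the boundary layer, so over a short time interval the absolutely continuous part $A_n^{\CO}$ can genuinely move the process $Y_n^{\CO}$ across the full range $[-\de_{\CK},\de_{\CK}]$. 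The boundedness $\abs{Y_n^{\CO}} \le \de_{\CK}$ controls only the total oscillation, not the oscillation over short subintervals, and a nondecreasing process with bounded total increase (which is what you get after adding $Ct$) is not tight in $C[0,T]$ in general. Your proposed repair --- choose $\de_{\CK}$ smaller than the target modulus $\eta$ --- does not yield tightness of any fixed sequence: each choice of $\de_{\CK}$ defines a different sequence $Y_n^{(\de_{\CK})} = \psi_{\de_{\CK}}(\phi(X_n))$, and Aldous--Rebolledo requires the criterion to hold for all target moduli for a single sequence. The three-region transfer step has a similar difficulty: a modulus-of-continuity violation that straddles the set $\{\abs{\phi} \le \de_{\CK}/2\}$ and the deep interior $\{\phi > \de_{\CK}/2\}$ is covered by neither piece of your argument, and ``Kolmogorov's criterion on the deep interior'' presupposes a well-defined stopped process whose restart you have not set up.

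Finally, the claimed uniform lower bound $\al_n \ge -C$ on all of $\CK$ does not follow from Lemma~\ref{lemma:prop-alpha} (which covers only $\CO(\de_0)$, the two-sided boundary layer) together with condition~(c) (which covers only $\{\phi \ge \de_0\}$). For $x$ with $-\de_{\CK} < \phi(x) < -\de_0$, where $\psi'' \ne 0$ and nothing constrains the direction of $f_n(x)$, the term $\nabla(\psi\circ\phi)(x)\cdot f_n(x)$ can be arbitrarily negative, since $\norm{f_n}$ is unbounded there. You can work around this by restricting to the asymptotic-in-$\ol{D}$ event, but once you do that it is much more efficient to argue on $\{\phi \ge \eps/4\}$ exactly as the paper does, and the detour through It\^o's formula and Aldous--Rebolledo buys nothing.

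Note also a small quantitative slip: with a $\psi$ that increases by $\de_{\CK}/2$ over an interval of length $\de_{\CK}/2$, one can keep $0 \le \psi' \le 1$, and $\psi''$ scales like $1/\de_{\CK}$, not $1/\de_{\CK}^2$.
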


\begin{proof} Because $\phi(X_n(0)) = \phi(z_n) \to \phi(z_0)$, by the Arzela-Ascoli criterion it suffices to fix $\de, \eps > 0$ and show that
\begin{equation}
\label{eq:tight-phi-Xn}
\lim\limits_{\de\to 0}\sup\limits_{n \ge 1}\MP\left(\oa\left(\phi\left(X_n\left(\cdot\wedge\tau^{X_n}_{\CO}\right)\right), [0, T], \de\right) > \eps\right) = 0.
\end{equation}
Assume that for a certain $n \ge 1$, the following events have happened:
$$
A_n := \Bigl\{\min\limits_{0 \le t \le T}\phi\left(X_n\left(t\wedge\tau^{X_n}_{\CO}\right)\right) > -\frac{\eps}2\Bigr\},\ \ B_n(\de) := \left\{\oa\left(\phi\left(X_n\left(\cdot\wedge\tau^{X_n}_{\CO}\right)\right), [0, T], \de\right) > \eps\right\}.
$$
Because $B_n(\de)$ has happened, there exist $t'_n, t''_n \in [0, T]$ such that $|t'_n- t''_n| \le \de$, and  
$$
\phi\left(X_n\left(t'_n\wedge\tau^{X_n}_{\CO}\right)\right) - \phi\left(X_n\left(t''_n\wedge\tau^{X_n}_{\CO}\right)\right) > \eps.
$$
Because $A_n$ has happened, we have:
$$
\phi\left(X_n\left(t''_n\wedge\tau^{X_n}_{\CO}\right)\right) > -\frac{\eps}2,\ \ \mbox{and therefore}\ \ \phi\left(X_n\left(t'_n\wedge\tau^{X_n}_{\CO}\right)\right) > \frac{\eps}2.
$$
Assume without loss of generality that $t''_n < t'_n$. By the intermediate value theorem, and by continuity of $\phi(X_n(\cdot\wedge\tau^{X_n}_{\CO}))$, there exists a $t_n$ between $t'_n$ and $t''_n$ such that for $s_n := t_n\wedge\tau_{\CO}^{X_n}$, $s'_n := t'_n\wedge\tau_{\CO}^{X_n}$, 
\begin{equation}
\label{eq:event-eps-4}
\phi(X_n(s'_n)) - \phi(X_n(s_n)) = \frac{\eps}4,\ \ \mbox{and}\ \ \phi(X_n(t\wedge\tau^{X_n}_{\CO})) \ge \frac{\eps}4,\ \ \mbox{for}\ \ t \in [t_n, t'_n], 
\end{equation}
Therefore, $s_n \le s'_n$, but $s'_n - s_n \le t'_n - t_n \le \de$. By Lemma~\ref{lemma:phi-Lip} we have:
\begin{equation}
\label{eq:difference}
\norm{X_n(s'_n) - X_n(s_n)} \ge \frac{\eps}4,\ \ \mbox{and}\ \ \phi(X_n(s)) \ge \frac{\eps}4,\ \ \mbox{for}\ \ s \in [s_n, s'_n].
\end{equation}
Consider the subset $\CK' := \{z \in \CK\mid\phi(z) \ge \eps/4\}$. This is a compact subset of $D$; therefore, $f_n \to 0$ uniformly on $\CK'$. Take an $n_1$ such that 
for $n \ge n_1$, we have: $\max_{z \in \CK'}\norm{f_n(z)} \le \eps/(8T)$. From~\eqref{eq:X-n-sum} and~\eqref{eq:L-l-n}, assuming $n \ge n_1$, we get the following estimate:
\begin{align*}
&\norm{X_n(s'_n)- X_n(s_n)} \le \norm{V_n(s'_n) - V_n(s_n)} + \int_{s_n}^{s'_n}\norm{f_n(X_n(u))}\md u \\ & \le 
\oa\left(V_n\left(\cdot\wedge\tau^{V_n}_{\CK}\right), [0, T], \de\right) + T\cdot\frac{\eps}{8T}.
\end{align*}
Combining this with~\eqref{eq:event-eps-4}, we get that the following event has happened:
$$
C_n(\de) := \Bigl\{\oa\left(V_n\left(\cdot\wedge\tau^{V_n}_{\CO}\right), [0, T], \de\right) \ge \frac{\eps}8\Bigr\}.
$$
We just proved that for $n \ge n_1$, the following inclusion is true:
$A_n\cap B_n(\de) \subseteq C_n(\de)$, or, equivalently, $B_n(\de) \subseteq C_n(\de)\cup(\Omega\setminus A_n)$. Therefore, $\MP(B_n(\de)) \le \MP(C_n(\de)) + \MP(\Omega\setminus A_n)$. Now, fix a small probability $\eta > 0$. There exists an $n_2$ such that for $n \ge n_2$, we have: $\MP(\Omega\setminus A_n) < \eta$, because $(X_n)_{n \ge 1}$ is asymptotically in $D$. By Lemma~\ref{lemma:V-n-tight}, the sequence $(V_n(\cdot\wedge\tau^{X_n}_{\CO}))_{n \ge 1}$ is tight in $C([0, T], \BR^d)$. Therefore, there exists a $\de_1$ such that for $\de \in (0, \de_1)$, we have: $\MP(C_n(\de)) < \eta$ for all $n$.  Therefore, for $\de \in (0, \de_1)$ and $n \ge n_0 := n_1\vee n_2$, we have: $\MP(B_n(\de)) < 2\eta$. This completes the proof.    
\end{proof}

\begin{lemma}
\label{lemma:int-a-n-tight}
The following sequence is tight in $C[0, T]$:
$$
t \mapsto \int_0^{t\wedge\tau^{X_n}_{\CO}}\al_n(X_n(s))\,\md s,\ \ n = 1, 2, \ldots
$$
\end{lemma}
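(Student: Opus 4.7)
The plan is to read off the integral $\int_0^{t \wedge \tau^{X_n}_\CO}\al_n(X_n(s))\,\md s$ from It\^o's formula~\eqref{eq:Y-n-main} and show it is the sum of three sequences, each of which is tight in $C[0,T]$. Since $\CO = \Int\CK$, we have $\tau^{X_n}_\CO \le \tau^{X_n}_\CK$, so stopping~\eqref{eq:Y-n-main} further at $\tau^{X_n}_\CO$ yields
\begin{equation*}
\int_0^{t \wedge \tau^{X_n}_\CO}\al_n(X_n(s))\,\md s \;=\; \psi\!\left(\phi\!\left(X_n(t \wedge \tau^{X_n}_\CO)\right)\right) - \psi(\phi(z_n)) - \widetilde{M}_n(t),
\end{equation*}
where $\widetilde{M}_n(t) := \int_0^{t \wedge \tau^{X_n}_\CO}\be_n(X_n(s))\,\md B_n(s)$ is a continuous local martingale.

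The first sequence on the right is tight in $C[0,T]$ because $\psi$ is Lipschitz and bounded on $\BR$, while $\phi(X_n(\cdot \wedge \tau^{X_n}_\CO))$ is tight in $C[0,T]$ by Lemma~\ref{lemma:phi-X-n-tight}; applying the continuous bounded map $\psi$ preserves tightness. The second sequence $\psi(\phi(z_n))$ is a sequence of constants converging (since $z_n \to z_0$), hence trivially tight.

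The main work lies in the martingale term $\widetilde{M}_n$. By Lemma~\ref{lemma:prop-beta}, for $n \ge n_1$ we have $\be_n(x) \le c_4$ on all of $\CK$, so
\begin{equation*}
\langle \widetilde{M}_n \rangle_t \;=\; \int_0^{t \wedge \tau^{X_n}_\CO}\be_n^2(X_n(s))\,\md s \;\le\; c_4^2 \, T
\end{equation*}
uniformly in $n$; moreover, $\langle \widetilde{M}_n\rangle$ is Lipschitz in $t$ with constant $c_4^2$ uniformly in $n$. By the Dambis--Dubins--Schwarz theorem, $\widetilde{M}_n(t) = B'_n(\langle \widetilde{M}_n\rangle_t)$ for some standard Brownian motion $B'_n$, so the modulus of continuity is controlled by
\begin{equation*}
\oa(\widetilde{M}_n, [0,T], \de) \;\le\; \oa(B'_n, [0, c_4^2 T], c_4^2 \de),
\end{equation*}
and tightness of $(B'_n)$ in $C[0, c_4^2 T]$ gives tightness of $(\widetilde{M}_n)$ in $C[0,T]$. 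Summing the three tight sequences yields tightness of the original integral. The main obstacle is purely bookkeeping: checking that the stopping at $\tau^{X_n}_\CO$ (rather than $\tau^{X_n}_\CK$, the time at which~\eqref{eq:Y-n-main} is stated) is compatible with the It\^o representation, and that the uniform bound on $\be_n$ from Lemma~\ref{lemma:prop-beta} is indeed uniform over $n$ large and over $x \in \CK$.
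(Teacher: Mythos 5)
Your proof is correct and mirrors the paper's: both read the integral off~\eqref{eq:Y-n-main} and establish tightness of the three remaining terms, namely $\psi(\phi(X_n^{\CO}(\cdot)))$ via Lemmata~\ref{lemma:phi-X-n-tight} and~\ref{lemma:cont-mapping-tight}, the convergent constants $\psi(\phi(z_n))$, and the stopped stochastic integral. The only variation is that you give a self-contained Dambis--Dubins--Schwarz time-change argument for the martingale term, exploiting the uniform bound $\be_n \le c_4$ on $\CK$ from Lemma~\ref{lemma:prop-beta} to get a Lipschitz control on $\langle \cdot \rangle$ and hence a modulus-of-continuity estimate transferred from a single Brownian law, where the paper simply invokes Lemma~7.3 of~\cite{MyOwn6} at that step.
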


\begin{proof} By Lemmata~\ref{lemma:phi-X-n-tight} and~\ref{lemma:cont-mapping-tight}, the sequence 
$$
(\psi(\phi(X_n^{\CO}(\cdot))))_{n \ge 1}
$$
is tight in $C[0, T]$. By Lemma~\ref{lemma:prop-beta}, and \cite[Lemma 7.3]{MyOwn6}, the following sequence is tight in $C[0, T]$: 
$$
t \mapsto \int_0^{t\wedge\tau^{X_n}_{\CO}}\be_n(X_n(s))\,\md B_n(s),\ \ n = 1, 2, \ldots
$$
By continuity of $\phi$ and $\psi$, and by condition (e) of Theorem~\ref{thm:1}, we have: $\psi(\phi(z_n)) \to \psi(\phi(z_0))$. Recall~\eqref{eq:Y-n-main} and complete the proof.
\end{proof} 

\begin{lemma} The sequence $\bigl(l_n\bigl(\cdot\wedge\tau^{X_n}_{\tilde{\CO}(\de_0)}\bigr)\bigr)_{n \ge 1}$ is tight in $C([0, T], \BR^d)$.
\label{lemma:l-n-stopped-tight} 
\end{lemma}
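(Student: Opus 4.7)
Set $\tau_n := \tau^{X_n}_{\tilde{\CO}(\de_0)}$; since $\tilde{\CO}(\de_0) \subseteq \CO \subseteq \Int\CK$, we have $\tau_n \le \tau^{X_n}_{\CK}$, so the It\^o representation~\eqref{eq:Y-n-main} is valid up to time $t\wedge\tau_n$. The plan is to combine this representation with the lower bound $\al_n \ge c_1\norm{f_n} - c_2$ from Lemma~\ref{lemma:prop-alpha} so as to dominate the increments of $l_n(\cdot\wedge\tau_n)$ by increments of two processes already known to be tight. To this end, introduce $\chi_n(s) := \mathbf{1}\{X_n(s) \in \CO(\de_0)\}$. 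On $\{\chi_n = 1\}$, Lemma~\ref{lemma:prop-alpha} yields $\norm{f_n(X_n(s))} \le (\al_n(X_n(s)) + c_2)/c_1$ for $n$ large; on $\{\chi_n = 0\}$ and $s < \tau_n$, the process lies in the compact subset $\CK' := \{x \in \CK \mid \phi(x) \ge \de_0\}$ of $D$, where $f_n \to 0$ uniformly by condition (c) of Theorem~\ref{thm:1}, and where the remaining ingredients of $\al_n$ (namely $b$, $\nabla(\psi\circ\phi)$, $D^2(\psi\circ\phi)$, and $A_n \to A$) are uniformly bounded.

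Combining these facts, I expect to obtain, for all $0 \le s \le t \le T$ and $n$ sufficiently large,
\begin{equation*}
l_n(t\wedge\tau_n) - l_n(s\wedge\tau_n) \le \tfrac{1}{c_1}\bigl|\psi(\phi(X_n^{\tau_n}(t))) - \psi(\phi(X_n^{\tau_n}(s)))\bigr| + \tfrac{1}{c_1}\bigl|M_n(t\wedge\tau_n) - M_n(s\wedge\tau_n)\bigr| + C(t-s) + \eta_n,
\end{equation*}
with a fixed constant $C$ and $\eta_n \to 0$. This estimate results from rewriting the integral of $\al_n\chi_n$ as the full integral of $\al_n$ (expressed via~\eqref{eq:Y-n-main}) minus the integral of $\al_n(1-\chi_n)$, whose absolute value is bounded by $K(t-s)$ thanks to the uniform boundedness above, while the $\norm{f_n}(1-\chi_n)$ contribution to $l_n$ is directly bounded by $\eta_n T$. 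Taking the supremum over $|t - s| \le \de$ yields
\begin{equation*}
\oa(l_n(\cdot\wedge\tau_n), [0, T], \de) \le \tfrac{1}{c_1}\oa(\psi\circ\phi(X_n^{\tau_n}), [0, T], \de) + \tfrac{1}{c_1}\oa(M_n(\cdot\wedge\tau_n), [0, T], \de) + C\de + \eta_n.
\end{equation*}

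To conclude via Arzel\`a--Ascoli, it remains to check tightness of the two processes dominating the right-hand side. The sequence $(\psi\circ\phi(X_n^{\tau_n}))_{n \ge 1}$ is tight in $C[0, T]$ by running the proof of Lemma~\ref{lemma:phi-X-n-tight} with $\tilde{\CO}(\de_0)$ in place of $\CO$ and then composing with the bounded Lipschitz function $\psi$; the stopped continuous martingales $(M_n(\cdot\wedge\tau_n))_{n \ge 1}$ are tight in $C[0, T]$ by the uniform quadratic-variation bound $\langle M_n\rangle_T \le c_4^2 T$ from Lemma~\ref{lemma:prop-beta} combined with a standard continuous-martingale tightness criterion. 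Together with the deterministic initial value $l_n(0) = 0$, this establishes tightness. The main obstacle I anticipate is the bookkeeping across two distinct stopping sets $\tilde{\CO}(\de_0)$ and $\CK$, and verifying that the $\chi_n = 0$ region contributes only $o(1)$ to both $l_n$ and $\int\al_n(X_n)\,\md s$ uniformly in $n$, which rests entirely on the locally uniform convergence $f_n \to 0$ on compact subsets of $D$.
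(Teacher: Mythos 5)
Your proposal is correct and follows the same core strategy as the paper: use the It\^o representation~\eqref{eq:Y-n-main}, the lower bound $\al_n \ge c_1\norm{f_n} - c_2$ from Lemma~\ref{lemma:prop-alpha}, and tightness of the $\psi\circ\phi(X_n)$ and martingale pieces. The paper's route is slightly shorter in its bookkeeping: it first establishes (Lemma~\ref{lemma:int-a-n-tight}) tightness of $t\mapsto\int_0^{t\wedge\tau^{X_n}_{\CO}}\al_n(X_n(s))\,\md s$ by reading~\eqref{eq:Y-n-main} backwards, passes to the smaller stopping time via Lemma~\ref{lemma:stopping-tight}, and then sandwiches the nondecreasing increments of $c_1 l_n$ between $0$ and $\int\al_n + c_2(t'-t'')$ using Lemma~\ref{lemma:three-processes-tight}. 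You instead bound the modulus of continuity of $l_n(\cdot\wedge\tau_n)$ directly, which is morally the same Arzel\`a--Ascoli argument. Two small remarks. First, your explicit split by $\chi_n = \mathbf{1}\{X_n(s) \in \CO(\de_0)\}$ addresses a point the paper glosses over: Lemma~\ref{lemma:prop-alpha} is stated only for $x\in\CO(\de_0)$, while the stopped process ranges over $\tilde{\CO}(\de_0)$, which also includes $\{\phi\ge\de_0\}$; the paper applies the estimate on that larger set without comment (it does hold there for large $n$ because $f_n\to 0$ and the other terms of $\al_n$ are bounded, but you make the reasoning explicit). Second, for tightness of $\psi\circ\phi(X_n^{\tau_n})$ you do not need to re-run the proof of Lemma~\ref{lemma:phi-X-n-tight} with a different stopping set: it is cleaner to take the already-proved tightness of $\psi\circ\phi(X_n^{\CO})$ and invoke Lemma~\ref{lemma:stopping-tight} (as the paper does), since $\tau^{X_n}_{\tilde{\CO}(\de_0)} \le \tau^{X_n}_{\CO}$.
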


\begin{proof} 
Note that $\tilde{\CO}(\de_0) \subseteq \CO$, and so $\tau^{X_n}_{\CO}\wedge\tau^{X_n}_{\tilde{\CO}(\de_0)} = \tau^{X_n}_{\tilde{\CO}(\de_0)}$. Therefore, by Lemmata~\ref{lemma:int-a-n-tight} and~\ref{lemma:stopping-tight}, the following sequence is tight in $C[0, T]$:
$$
t \mapsto \int_0^{t\wedge\tau^{X_n}_{\tilde{\CO}(\de_0)}}\al_n(X_n(s))\,\md s,\ \ n = 1, 2, \ldots
$$
By Lemma~\ref{lemma:prop-alpha}, for $0 \le t'' \le t' \le T$, $n \ge n_1$,
\begin{align*}
\int_{t''\wedge\tau^{X_n}_{\tilde{\CO}(\de_0)}}^{t'\wedge\tau^{X_n}_{\tilde{\CO}(\de_0)}}&\al_n(X_n(s))\,\md s + c_2(t' - t'')  \ge c_1\int_{t''\wedge\tau^{X_n}_{\tilde{\CO}(\de_0)}}^{t'\wedge\tau^{X_n}_{\tilde{\CO}(\de_0)}}\norm{f_n(X_n(s))}\md s \\ & = c_1\left[l_n\left(t'\wedge\tau^{X_n}_{\tilde{\CO}(\de_0)}\right) - l_n\left(t''\wedge\tau^{X_n}_{\tilde{\CO}(\de_0)}\right)\right] \ge 0.
\end{align*}
Applying Lemma~\ref{lemma:three-processes-tight}, we finish the proof of Lemma~\ref{lemma:l-n-stopped-tight}. 
\end{proof}

Now, let us complete the proof of Lemma~\ref{lemma:l-n-tight}. Fix a small probability $\eta > 0$. Take an open subset $\CG \subseteq C[0, T]$. Because the sequence $(X_n)_{n \ge 1}$ of processes is asymptotically in $\ol{D}$, we have:
\begin{equation}
\label{eq:E-n}
\lim\limits_{n \to \infty}\MP(E_n) = 1,\ \ \mbox{where}\ \ E_n := \bigl\{\min\limits_{0 \le t \le T}\phi\left(X_n^{\CO}(t)\right) > -\de_0\bigr\} \supseteq \bigl\{\min\limits_{0 \le t \le T}\phi\left(X_n^{\CK}(t)\right) > -\de_0\bigr\}
\end{equation}
Note that we have the following sequence of event inclusions:
\begin{equation}
\label{eq:E-n-inclusion}
E_n \subseteq \bigl\{\tau_{\tilde{\CO}(\de_0)}^{X_n} = \tau_{\CO}^{X_n}\bigr\} \subseteq \bigl\{l_n\bigl(t\wedge\tau^{X_n}_{\tilde{\CO}(\de_0)}\bigr) = l_n\bigl(t\wedge\tau_{\CO}^{X_n}\bigr),\ \forall\ t \in [0, T]\bigr\}.
\end{equation}
Because $\bigl(l_n\bigl(\cdot\wedge\tau^{X_n}_{\tilde{\CO}(\de_0)}\bigr)\bigr)_{n \ge 1}$ is tight in $C[0, T]$, every increasing sequence $(n_k)_{k \ge 1}$ of positive integers has a subsequence $(n'_k)_{k \ge 1}$ such that $l_{n'_k}\bigl(\cdot\wedge\tau^{X_{n'_k}}_{\tilde{\CO}(\de_0)}\bigr) \Ra \ol{l}$ for some random element $\ol{l}$ in $C[0, T]$. By the portmanteau theorem, see \cite[Section 1.2]{BillingsleyBook}, 
\begin{equation}
\label{eq:weak-convergence}
\varliminf\limits_{k \to \infty}\MP\bigl(l_{n'_k}\bigl(\cdot\wedge\tau^{X_{n'_k}}_{\tilde{\CO}(\de_0)}\bigr) \in \CG\bigr) \ge \MP\left(\ol{l} \in \CG\right).
\end{equation}
Because of~\eqref{eq:E-n-inclusion}, we have the following comparison of probabilities:
$$
\MP\bigl(l_{n'_k}\bigl(\cdot\wedge\tau^{X_{n'_k}}_{\CO}\bigr) \in \CG\bigr) \ge \MP\bigl(l_{n'_k}\bigl(\cdot\wedge\tau^{X_{n'_k}}_{\tilde{\CO}(\de_0)}\bigr) \in \CG\bigr) - \MP(E_n^c).
$$
Taking $\varliminf_{n \to \infty}$ and using~\eqref{eq:E-n} and~\eqref{eq:weak-convergence}, we get:
$$
\varliminf\limits_{n \to \infty}\MP\bigl(l_{n'_k}\bigl(\cdot\wedge\tau^{X_{n'_k}}_{\CO}\bigr) \in \CG\bigr) \ge \varliminf\limits_{n \to \infty}\MP\bigl(l_{n'_k}\bigl(\cdot\wedge\tau^{X_{n'_k}}_{\tilde{\CO}(\de_0)}\bigr) \in \CG\bigr)
- \lim\limits_{n \to \infty}\MP(E_n^c) \ge \MP\left(\ol{l} \in \CG\right).
$$
Applying the portmanteau theorem, we get: $l_{n'_k}(\cdot\wedge\tau^{X_{n'_k}}_{\CK}\bigr) \Ra \ol{l}$. Therefore, every subsequence of 
$\left(l_n\left(\cdot\wedge\tau^{X_n}_{\CO}\right)\right)_{n \ge 1}$ has a weakly convergent subsequence. Therefore, 
$\left(l_n\left(\cdot\wedge\tau^{X_n}_{\CO}\right)\right)_{n \ge 1}$ is tight, which completes the proof.

\subsection{Proof of Lemma~\ref{lemma:L-l}} Take a $t < \tau^{\ol{Z}}_{\CO}$. Then we get: 
\begin{align*}
L_n(t) &= \int_0^tf_n(X_n(s))\,\md s \\ & = \int_0^t\left[\frac{f_n(X_n(s))}{\norm{f_n(X_n(s))}} - \mg(\ol{Z}(s))\right]\norm{f_n(X_n(s))}\,\md s + \int_0^t\mg(\ol{Z}(s))\,\md l_n(s).
\end{align*}
Now, $l_n \to \ol{l}$ a.s. uniformly in $[0, T]$, and by Lemma~\ref{lemma:weak-like-conv} we get a.s. convergence:
\begin{align}
\label{eq:441}
\int_0^t\mg(\ol{Z}(s))\,\md l_n(s) \to \int_0^t\mg(\ol{Z}(s))\,\md\ol{l}(s),\ \ n \to \infty.
\end{align}
Let us show that as $n \to \infty$, we have a.s.:
\begin{align}
\label{eq:442}
\begin{split}
\int_0^t&\left[\frac{f_n(X_n(s))}{\norm{f_n(X_n(s))}} - \mg(\ol{Z}(s))\right]\norm{f_n(X_n(s))}\,\md s  \\ & \equiv \int_0^t\left[\frac{f_n(X_n(s))}{\norm{f_n(X_n(s))}} - \mg(\ol{Z}(s))\right]\,\md l_n(s) \to 0.
\end{split}
\end{align}
Fix an $\eps > 0$. Note that we can split the integral into two parts:
\begin{align}
\label{eq:split}
\begin{split}
\int\limits_0^t&\left[\frac{f_n(X_n(s))}{\norm{f_n(X_n(s))}} - \mg(\ol{Z}(s))\right]\md l_n(s) \\ & = \left(\int\limits_{E_n(\eps)} + \int\limits_{F_n(\eps)}\right)\left[\frac{f_n(X_n(s))}{\norm{f_n(X_n(s))}} - \mg(\ol{Z}(s))\right]\md l_n(s).
\end{split}
\end{align}
where we define the following (random) subsets of $[0, t]$: 
$$
E_n(\eps) := \{s \in [0, t]:\ \norm{f_n(X_n(s))} < \eps\},\ \ F_n(\eps) := \{s \in [0, t]:\ \norm{f_n(X_n(s))} \ge \eps\}.
$$
One of these integrals from~\eqref{eq:split} can be estimated easily:
\begin{equation}
\label{eq:452}
\int\limits_{E_n(\eps)}\left[\frac{f_n(X_n(s))}{\norm{f_n(X_n(s))}} - \mg(\ol{Z}(s))\right]\,\md l_n(s) \le (1 + 1)\cdot\eps\cdot t \le 2\eps T.
\end{equation}
Let us estimate the other integral from~\eqref{eq:split}, over $F(\eps)$. By Lemma~\ref{lemma:closedness}, for every $\de > 0$ there exists an $n_{\de}$ such that for $n \ge n_{\de}$, we have: 
\begin{equation}
\label{eq:conv-X-n-Z}
\max\limits_{0 \le s \le T}\norm{X_n\left(s\wedge\tau^{X_n}_{\CO}\right) - \ol{Z}(s)} \le \de.
\end{equation}
From~\eqref{eq:conv-X-n-Z}, we have the following estimation:
\begin{equation}
\label{eq:complicated-estimate}
\int\limits_{F_n(\eps)}\left[\frac{f_n(X_n(s))}{\norm{f_n(X_n(s))}} - \mg(\ol{Z}(s))\right]\md l_n(s) \le l_n(T)\cdot\sup\limits_{(z, \tilde{z}) \in \Xi(\de, \eps)}\norm{\frac{f_n(z)}{\norm{f_n(z)}} - \mg(\tilde{z})}.
\end{equation}
Taking $\lim_{\de \to 0}\varlimsup_{n \to \infty}$ in~\eqref{eq:complicated-estimate}, applying Lemma~\ref{lemma:closedness} and observing that $l_n(T) \to \ol{l}(T)$ a.s., we get:
\begin{equation}
\label{eq:453}
\lim\limits_{n \to \infty}\int\limits_{F_n(\eps)}\left[\frac{f_n(X_n(s))}{\norm{f_n(X_n(s))}} - \mg(\ol{Z}(s))\right]\md l_n(s) = 0.
\end{equation}
Taking $\varlimsup_{n \to \infty}$ in~\eqref{eq:split} and applying~\eqref{eq:452} and~\eqref{eq:453}, we get:
$$
\varlimsup\limits_{n \to \infty}\int\limits_0^t\left[\frac{f_n(X_n(s))}{\norm{f_n(X_n(s))}} - \mg(\ol{Z}(s))\right]\md l_n(s) \le 2T\eps.
$$
Since $\eps > 0$ is arbitrary, this proves~\eqref{eq:442}. Combining~\eqref{eq:441} and~\eqref{eq:442} and noting that $\mg(z) = r(z)/\norm{r(z)}$ for $z \in \pa D$,  we complete the proof. 

\subsection{Proof of Lemma~\ref{lemma:V-view}} Uniformly on $[0, T]$, we have:
$X_n\left(\cdot\wedge\tau_{\CO}^{X_n}\right) \equiv X_n^{\CO}(\cdot) \to \ol{Z}(\cdot)$. By definition of $\tau^{X_n}_{\CO}$, we have: $X_n\left(t\wedge\tau_{\CO}^{X_n}\right) \in \CK$ for all $t \in [0, T]$. Because the set $\CK \supseteq \CO$ is closed, $\ol{Z}(t) \in \CK$ for $t \in [0, T]$. Take a $t < \tau^{\ol{Z}}_{\CO}$; then for large enough $n$, we have: $t < \tau^{X_n}_{\CO}$. (Indeed, otherwise, there exists a subsequence $(n_k)_{k \ge 1}$ such that $t_k := \tau_{\CO}^{X_{n_k}} \le t$. The process $X_{n_k}^{\CO}$ stops earlier than $t$: it stops at $t_k$. Therefore, $X_{n_k}(t) = X_{n_k}(t_k) \in \BR^d\setminus\CO$. Taking $k \to \infty$, we have: $\ol{Z}(t) \in \BR^d\setminus\CO$. This contradicts the assumption that $t < \tau^{\ol{Z}}_{\CO}$, because then $\ol{Z}(t) \in \CO$.) By Assumption~\ref{asmp:continuous}, the function $b$ is continuous and therefore bounded on $\CK$. Therefore, we have the following bounded convergence: $b(X_n(s)) \to b(\ol{Z}(s))$ for every $s \in [0, t]$. Applying Lebesgue dominated convergence theorem, we get:
\begin{equation}
\label{eq:b-conv}
\int_0^tb(X_n(s))\,\md s \to \int_0^tb(\ol{Z}(s))\,\md s,\ \ n \to \infty.
\end{equation}
Also, $\si_n \to \si$ uniformly on $\CK$, and $\si$ is continuous (therefore bounded) on $\CK$. Therefore, this convergence is also bounded. By Lemma~\ref{lemma:conv-of-ito}, we have: 
\begin{equation}
\label{eq:sigma-conv}
\int_0^t\si_n(X_n(s))\,\md W_n(s) \to \int_0^t\si(\ol{Z}(s))\,\md\ol{W}(s)\ \ \mbox{in}\ \ L^2.
\end{equation}
The convergence~\eqref{eq:sigma-conv} is in $L^2$, and therefore in probability. Extract an a.s. convergent subsequence:
\begin{equation}
\label{eq:si-conv}
\int_0^t\si_{n_k}(X_{n_k}(s))\,\md W_{n_k}(s) \to \int_0^t\si(\ol{Z}(s))\,\md\ol{W}(s),\ \ k \to \infty.
\end{equation}
In addition, we have the following convergence a.s.:
\begin{equation}
\label{eq:V-conv}
V_{n_k}(t) \equiv z_{n_k} + \int_0^tb(X_{n_k}(s))\,\md s + \int_0^t\si_{n_k}(X_{n_k}(s))\,\md W_{n_k}(s) \to \ol{V}(t),\ \ k \to \infty.
\end{equation}
Combining~\eqref{eq:b-conv},~\eqref{eq:si-conv},~\eqref{eq:V-conv} with $z_{n_k} \to z_0$, we complete the proof. 

\subsection{Proof of Lemma~\ref{lemma:Z-in-D}} We have a.s. uniform (and therefore weak) convergence $X_n^{\CO} \to \ol{Z}$. Fix an $\eta > 0$. Take the following open subset of $C([0, T], \BR^d)$: 
$$
\CG(\eta) := \Bigl\{x \in C([0, T], \BR^d)\mid \min\limits_{0 \le t \le T}\phi(x(t)) > -\eta\Bigr\}.
$$
By the portmanteau theorem, see \cite[Section 1.2]{BillingsleyBook}, we have:
\begin{equation}
\label{eq:142142}
\MP(\ol{Z} \in \CG(\eta)) \ge \varliminf\limits_{n \to \infty}\MP\left(X_n^{\CO} \in \CG(\eta)\right).
\end{equation}
Combine~\eqref{eq:142142} with the fact that $(X_n)_{n \ge 1}$ is asymptotically in $\ol{D}$. We get: $\MP(\ol{Z} \in \CG(\eta)) = 1$. This is true for every $\eta > 0$. But $\cap_{n \ge 1}\CG(1/n) = \{x \in C([0, T], \BR^d)\mid x(t) \in \ol{D}\}$. This completes the proof. 

\subsection{Proof of Lemma~\ref{lemma:prop-l}} That $\ol{l}$ is continuous, nondecreasing, and $\ol{l}(0) = 0$ follows from uniform convergence $l_n \to \ol{l}$ on $[0, T]$ (because each $l_n$ also has these properties). Now, let us prove that $\ol{l}$ can grow only when $\ol{Z} \in \pa D$. Assume that $\ol{Z}(s)$ is away from $\pa D$ for $s \in [s_1, s_2]$. This is equivalent to $\phi(Z(s)) > 0$ for these $s$. Because $\phi(Z(\cdot))$ is continuous, we have: $\phi(\ol{Z}(s)) \ge \eta > 0$ for $s \in [s_1, s_2] \subseteq [0, T]$. Let us show that $\ol{l}(s_1) = \ol{l}(s_2)$. By the uniform convergence $X_{n}^{\CO} \to \ol{Z}$, there exists an $n_0$ such that
$$
\norm{X_{n}^{\CO}(s) - \ol{Z}(s)} \le \frac{\eta}2\ \ \mbox{for}\ \ s \in [0, T],\ n \ge n_0.
$$
Therefore, for $s \in [s_1, s_2]$ and $n \ge n_0$, we have: 
$$
\phi(X^{\CO}_{n}(s)) \ge \phi(\ol{Z}(s)) - \norm{X_{n}^{\CO}(s) - \ol{Z}(s)} \ge \eta - \frac{\eta}2 = \frac{\eta}2.
$$
But on the compact set $\CK' := \{x \in \CK\mid \phi(x) \ge \eta/2\}$ we have:
$\norm{f_{n}} \to 0$ uniformly. Therefore, 
$$
0 \le \ol{l}(s_2) - \ol{l}(s_1) = \lim\limits_{n \to \infty}\left[l_{n}(s_2) - l_{n}(s_1)\right] \le (s_2 - s_1)\cdot\sup\limits_{\CK'}\norm{f_{n}} \to 0.
$$
This completes the proof that $\ol{l}(s_1) = \ol{l}(s_2)$.

\subsection{Proof of Lemma~\ref{lemma:convergence-final}} Fix any increasing sequence $(n_k)_{k \ge 1}$ of positive integers. Fix a certain time horizon $T > 0$ and a small probability $\eta > 0$. Define the following sequence of compact subsets of $\CU$:
$$
\CK_m := \ol{\CO}_m,\ \ \CO_m := \{x \in \CU\mid \dist(x, \pa\CU) > m^{-1},\ \ \norm{x} < m\},\ \ m = 1, 2, \ldots
$$
Then $\Int\CK_m \supseteq \CO_m$, and so $\Int\CK_m \uparrow \CU$ as $m \to \infty$. Therefore, for some large enough $m$,   
\begin{equation}
\label{eq:localize-00}
\MP\left(Z(t) \in \Int\CK_m\ \ \mbox{for all}\ \ t \in [0, T]\right) > 1 - \eta.
\end{equation}
There exists a subsequence $(n'_k)_{k \ge 1}$ such that $X^{\CO_m}_{n_k} \Ra \ol{Z}$, $k \to \infty$. By our assumptions, this limit point $\ol{Z}$ behaves as $Z$ until it exits $\CO_m$. Define the following subset of $C([0, T], \BR^d)$: 
$\CA = \{h \in C([0, T], \BR^d)\mid h(t) \in \CO_m\ \ \mbox{for all}\ \ t \in [0, T]\}$. 
From~\eqref{eq:localize-00}, we get:
\begin{equation}
\label{eq:localize-01}
\MP\left(\ol{Z} \in \CA\right) = \MP\left(Z \in \CA\right) > 1 - \eta.
\end{equation}
From the definition of the set $\CA$, for any random process $f = (f(t), 0 \le t \le T)$, we have: 
\begin{equation}
\label{eq:localize-02}
\{f \in \CA\} = \{f^{\CO_m} \in \CA\}.
\end{equation}
Since $\CA$ is open, by the portmanteau theorem, see \cite[Section 1.2]{BillingsleyBook}, and by~\eqref{eq:localize-01}, ~\eqref{eq:localize-02}, we have:
\begin{equation}
\label{eq:localize-03}
\varliminf\limits_{k \to \infty}\MP\left(X^{\CO_m}_{n'_k} \in \CA\right) \equiv \varliminf\limits_{k \to \infty}\MP\left(X_{n'_k} \in \CA\right) \ge \MP(\ol{Z} \in \CA) > 1 - \eta.
\end{equation}
Therefore, there exists a certain $k_0$ such that for $k > k_0$, $\MP\left(X_{n'_k}(t) \in \CO_m,\ \ t \in [0, T] \right) > 1 - \eta$. For any open set $\CG \subseteq C([0, T], \BR^d)$, by definition of $\CA$, we have: 
$$
\MP(X_n \in \CG) \ge \MP(X_n \in \CG\cap\CA) = \MP(X^{\CO_m}_n \in \CG\cap \CA).
$$
The set $\CG\cap\CA$ is also open in $C([0, T], \BR^d)$. Applying the portmanteau theorem again, we get:
\begin{align*}
\varliminf\limits_{k \to \infty}\MP(X_{n'_k} \in \CG) &\ge \varliminf\limits_{k \to \infty}\MP(X^{\CO_m}_{n'_k} \in \CG\cap\CA) \ge \MP(\ol{Z}^{\CO_m} \in \CG\cap\CA) = \MP(Z^{\CO_m} \in \CG\cap\CA) \\ &= \MP(Z \in \CG\cap\CA) \ge \MP(Z \in \CG) - \MP(Z \notin \CA) \ge \MP(Z \in \CG) - \eta.
\end{align*}
This seems to complete the proof, because $\eta > 0$ can be taken to be arbitrarily small. However, there is a difficulty: The subsequence $(n'_k)_{k \ge 1}$ depends on $\eta$. To bypass this, let us use a standard diagonal argument: let $\eta_m := 1/m$ and construct a sequence $(n^{(m)}_k)_{k \ge 1},\ m = 1, 2, \ldots$ of inserted subsequences with the following property: $(n^{(m+1)}_k)_{k \ge 1}$ is a subsequence of $(n^{(m)}_k)_{k \ge 1}$. For every $m$,  
$$
\varliminf\limits_{k \to \infty}\MP(X_{n^{(m)}_k} \in \CG) \ge \MP(Z \in \CG) - \frac1m.
$$
Take the diagonal subsequence $\ol{n}_k := (n_k^{(k)})_{k \ge 1}$; then 
\begin{equation}
\label{eq:localize-04}
\varliminf\limits_{k \to \infty}\MP(X_{\ol{n}_k} \in \CG) \ge \MP(Z \in \CG).
\end{equation}
We proved~\eqref{eq:localize-04} for every open set $\CG$ in $C([0, T], \BR^d)$. Thus, by the portmanteau theorem, $X_{\ol{n}_k} \Ra Z$. To summarize, we now proved that every increasing sequence $(n_k)_{k \ge 1}$ of positive integers has a subsequence $(\ol{n}_k)_{k \ge 1}$ such that $X_{\ol{n}_k} \Ra Z$ in $C([0, T], \BR^d)$. Thus, $X_n \Ra Z$ in $C([0, T], \BR^d)$. 

\section{Proof of Theorem~\ref{thm:2}}

\subsection{The main part of the proof} We use the notation from Section 4. First, in this subsection, we state a technical lemma, and apply it to the sequence $(\psi(\phi(X^{\CO}_n)))_{n \ge 1}$. This completes the proof of Theorem~\ref{thm:2}. The rest of this section is devoted to the proof of this technical lemma. 

\begin{lemma} Fix $T > 0$. Take a sequence $(Y_n)_{n \ge 1}$ of real-valued continuous adapted processes $Y_n = (Y_n(t), 0 \le t \le T)$. In addition, for every $n = 1, 2, \ldots$ take a real number $y_n > 0$, an $(\CF_t)_{t \ge 0}$-stopping time $\tau_n \le T$, an $(\CF_t)_{t \ge 0}$-Brownian motion $B_n = (B_n(t), 0 \le t \le T)$, and real-valued adapted processes $\ga_n = (\ga_n(t), 0 \le t \le T)$ and $\rho_n = (\rho_n(t), 0 \le t \le T)$. Assume that for each $n = 1, 2, \ldots$ the process $Y_n$ satisfies the following equation:
$$
\md Y_n(t) = \ga_n(t)\md t + \rho_n(t)\md B_n(t),\ \ t \le \tau_n,\ \ \mbox{and}\ \ Y_n(0) = y_n.
$$
Assume that, almost surely, for some positive constants $\eps_0, C_1, C_2, C_3, C_4, C_5$, we have:

\smallskip

(a) there exists $y_0 \in (0, \eps_0)$ such that for all $n = 1, 2, \ldots$ we have: $y_n \in [y_0, \eps_0]$;

\smallskip

(b) for $t \le \tau_n$, if $|Y_n(t)| \le \eps_0$, then $0 < C_1 \le |\rho_n(t)| \le C_2 < \infty$;

\smallskip

(c) for $t \le \tau_n$, if $|Y_n(t)| \le \eps_0$, then $\ga_n(t) \ge \ol{\ga}_n(Y_n(t))$ for some function $\ol{\ga}_n : [-\eps_0, \eps_0] \to \BR$;

\smallskip

(d) $\ol{\ga}_n(s) \ge -C_3$ for all $s \in [-\eps_0, \eps_0]$ and $n = 1, 2,\ldots$;

\smallskip

(e) for every $\eps \in (0, \eps_0)$, we have:
$$
\lim\limits_{n \to \infty}\int_{-\eps}^{\eps}\ol{\ga}_n(s)\md s = \infty;
$$

\smallskip

(f) for every $\eps \in (0, \eps_0)$, we have: 
$$
\varlimsup\limits_{n \to \infty}\sup\limits_{s \in [\eps, \eps_0]}\ol{\ga}_n(s) \le C_4;
$$

\smallskip

(g) there exists an $\eps_1 \in (0, \eps_0]$ such that if $t \le \tau_n$ and $Y_n(t) \in (\eps_1, \eps_0)$, then $\ga_n(t) \le C_5$.

\smallskip

Then for every $\de > 0$, we have:
$$
\lim\limits_{n \to \infty}\MP\Bigl(\min\limits_{0 \le t \le \tau_n}Y_n(t) > -\de\Bigr) = 1.
$$
\label{lemma:master}
\end{lemma}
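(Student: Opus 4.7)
The plan is a scale-function / Lyapunov argument that exploits the spike in $\bar\ga_n$. Write $\bar\ga_n=\bar\ga_n^+-\bar\ga_n^-$ and set
\[
P_n(y):=\exp\Bigl(-\tfrac{2}{C_2^2}\IL_0^y\bar\ga_n^+(u)\,\md u+\tfrac{2}{C_1^2}\IL_0^y\bar\ga_n^-(u)\,\md u\Bigr),\qquad h_n(y):=\IL_y^{\eps_0}P_n(u)\,\md u,
\]
so $h_n\ge 0$ on $[-\de,\eps_0]$, $h_n(\eps_0)=0$, $h_n'=-P_n\le 0$ and $h_n''=-P_n'$. A two-case check on the sign of $\bar\ga_n(y)$ (using $|\rho_n|\le C_2$ when $\bar\ga_n(y)\ge 0$ and $|\rho_n|\ge C_1$ when $\bar\ga_n(y)<0$), together with $\ga_n\ge\bar\ga_n(Y_n)$ from condition (c), gives
\[
h_n'(y)\ga_n+\tfrac12 h_n''(y)\rho_n^2\le 0\qquad\text{whenever }|Y_n(t)|\le\eps_0.
\]
Hence, by It\^o's formula, $h_n(Y_n(\cdot\wedge\eta_n))$ is a nonnegative, bounded, continuous supermartingale, where $\eta_n:=\inf\{t:|Y_n(t)|\ge\eps_0\}\wedge\tau_n$.

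Let $T_n^-:=\inf\{t:Y_n(t)\le -\de\}$ and $T_n^+:=\inf\{t:Y_n(t)\ge\eps_0\}$. Optional stopping at $T_n^-\wedge T_n^+\wedge\tau_n\le\eta_n$, combined with $h_n(\eps_0)=0$, yields
\[
\MP\bigl(T_n^-\le T_n^+\wedge\tau_n\bigr)\;\le\;\frac{h_n(y_n)}{h_n(-\de)}\;=\;\frac{\IL_{y_n}^{\eps_0}P_n(u)\,\md u}{\IL_{-\de}^{\eps_0}P_n(u)\,\md u}\;=:\;q_n.
\]
Since $\bar\ga_n\ge -C_3$ by (d), condition (e) forces $\IL_{-\eps}^{\eps}\bar\ga_n^+(u)\,\md u\to\infty$ for every $\eps\in(0,\eps_0)$. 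Splitting into the two halves, along any subsequence at least one of $\IL_0^{\eps}\bar\ga_n^+\to\infty$ or $\IL_{-\eps}^{0}\bar\ga_n^+\to\infty$ must occur for each fixed $\eps$. In the first scenario, $P_n\to 0$ uniformly on $[\eps,\eps_0]$; choosing $\eps<y_0\le y_n$ sends the numerator of $q_n$ to zero while the denominator stays bounded below. In the second, $P_n\to\infty$ uniformly on a subinterval of $(-\de,0)$, so the denominator blows up while the numerator stays bounded. Either way $q_n\to 0$, and, because $y_n$ enters the bound only through $h_n(y_n)\le h_n(\eps_1)\to 0$, the same conclusion holds uniformly for any starting point in $[\eps_1,\eps_0]$.

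The one-excursion bound must now be iterated, since $Y_n$ may leave $(-\eps_0,\eps_0)$ through $\eps_0$ and return. This is exactly where condition (g) enters: in the layer $(\eps_1,\eps_0)$ we have $\ga_n\in[-C_3,C_5]$ and $|\rho_n|\in[C_1,C_2]$ uniformly, so a standard hitting-time estimate for bounded one-dimensional semimartingales shows the expected time for a single traversal of $[\eps_1,\eps_0]$ is bounded below by a constant $\kappa>0$ independent of $n$. Hence the number $N_n$ of downward excursions of $Y_n$ from $\eps_0$ into $[-\de,\eps_1]$ on $[0,T]$ is tight in $n$. Applying the strong Markov property at the successive returns to $\eps_0$, together with the uniform one-excursion bound $q_n$, a union bound gives, for any $M\ge 1$,
\[
\MP\Bigl(\min_{0\le t\le\tau_n}Y_n(t)\le -\de\Bigr)\;\le\;Mq_n+\MP(N_n>M),
\]
and sending first $n\to\infty$ and then $M\to\infty$ finishes the proof. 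The main obstacle I expect is this iteration step: the lemma gives no control on $Y_n$ once it exceeds $\eps_0$, so the re-entry analysis must be carried out purely locally in the thin layer $(\eps_1,\eps_0)$ via condition (g); the case split in the previous paragraph is also delicate, since the spike in $\bar\ga_n$ may sit on either side of $0$ (or, along subsequences, on both).
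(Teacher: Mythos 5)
Your overall architecture matches the paper's: a one-excursion hitting bound that goes to zero, followed by an iteration that controls the number of down-excursions from $\eps_0$ before time $T$. But the route to each step differs, and one of your two steps is a genuine gap.

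For the one-excursion bound you build $h_n$ directly from $\bar\ga_n^{\pm}$ and show $h_n(Y_n(\cdot\wedge\eta_n))$ is a nonnegative bounded supermartingale, then apply optional stopping. This is a nice simplification: the paper instead time-changes $Y_n$ so that the martingale part becomes a Brownian motion, compares the time-changed process to a one-dimensional SDE $\md\tilde Y_n = \psi_n(\tilde Y_n)\,\md t + \md W_n$ with $\psi_n := C_2^{-2}\bar\ga_n - C_1^{-2}C_3$, and then uses the classical scale-function formula for $\tilde Y_n$ (Lemmata \ref{lemma:aux-important}, \ref{lemma:scale-function}). Your $P_n$ is slightly sharper than the paper's $\psi_n$ (you keep $\bar\ga_n^-$ instead of replacing it by the constant $C_3$), and you avoid the time-change, but you then need It\^o's formula for a function that is only $W^{2,1}$; the paper's route avoids this by reducing to a genuine diffusion. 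Both routes reach the same conclusion. A small technical note: your statement ``along any subsequence at least one of $\IL_0^{\eps}\bar\ga_n^+\to\infty$ or $\IL_{-\eps}^0\bar\ga_n^+\to\infty$ must occur'' is not literally true (the mass can alternate halves); you need to pass to a further sub-subsequence, after which the argument goes through and gives $q_n\to 0$.

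The iteration is where the gap is. You invoke ``a standard hitting-time estimate for bounded one-dimensional semimartingales'' giving a uniform lower bound $\kappa$ on the expected traversal time of $[\eps_1,\eps_0]$. There is no such citation to hand: condition (g) only bounds $\ga_n$ from above on the open layer $(\eps_1,\eps_0)$, while below $\eps_1$ the drift can be enormous (indeed $\bar\ga_n$ is built to spike there), so a naive first-passage bound from $\eps_1$ to $\eps_0$ does not follow from textbook estimates for processes with globally bounded drift. The paper handles this in Lemma~\ref{lemma:1991}: if the upward passage were fast, one can isolate the \emph{last} excursion from $\eps_1$ to $\eps_0$, on which $Y_n$ stays inside $(\eps_1,\eps_0)$ and the drift is bounded by $C_5$, and then bound the required jump of the martingale part by the modulus of continuity of a Brownian motion (after the time-change). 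This is a real argument, not a citation, and your sketch should contain something of its substance. Relatedly, when you ``apply the strong Markov property at the successive returns to $\eps_0$'' you are treating $Y_n$ as Markov, which it is not ($\ga_n,\rho_n$ are general adapted processes). The paper's substitute is to observe that the time-shifted process $Y_n(\cdot+\tau^{(n,j-1)}_{\eps_1})$ again satisfies hypotheses (a)--(g) with the same constants, so the one-excursion bound applies at each return; the iteration is then carried out via auxiliary independent Bernoulli random variables $\zeta_j$ coupled to the traversal times (Lemmata \ref{lemma:comp-of-probabilities}, \ref{lemma:pjp1}). Your ``$Mq_n + \MP(N_n>M)$'' bound is the right shape and is essentially what the paper proves, but as written it rests on an unproved uniform lower bound on the traversal time and an inapplicable Markov property.
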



Apply Lemma~\ref{lemma:master} to the equation~\eqref{eq:Y-n-main}, with $c_1, c_2$ taken from Lemma~\ref{lemma:prop-alpha}, with 
$$
\eps_0 := \de_0\wedge(\eps_{\CK}/3),\ \ \mbox{any}\ \ \eps_1 \in (0, \eps_0),\ \ 
y_0 := \psi(\phi(z_0))/2,
$$
$$
\ga_n(t) := \al_n(X_n(t)),\ \ \rho_n(t) := \be_n(X_n(t)),\ \ Y_n(t) := (\psi\circ\phi)(X_n(t)), 
$$
$$
\tau_n := \tau_{\CO}^{X_n}\wedge T,\ \ \ol{\ga}_n(s) := c_1\cm_{\CO, n}(s) - c_2,\ \ y_n := \psi(\phi(z_n))\ \ \mbox{for}\ \ n = 1, 2, \ldots
$$
Let us verify assumptions (a) - (g) of Lemma~\ref{lemma:master}. 

\smallskip

{\bf Proof of (a):} By assumption (e) of Theorem~\ref{thm:1} and continuity of functions $\psi$ and $\phi$, we have: $\psi(\phi(z_n)) \to \psi(\phi(z_0))$. But $z_0 \in D$ by Assumption~\ref{asmp:continuous}, and therefore $\phi(z_0) > 0$. From the properties of the function $\psi$ it follows that $\psi(\phi(z_0)) > 0$. Therefore, $\psi(\phi(z_n)) \ge y_0 = \psi(\phi(z_0))/2$ for large enough $n$. Without loss of generality, we can assume this for all $n$. 

\smallskip

{\bf Proof of (b):} Assume $|Y_n(t)| \le \eps_0$, and $t \le \tau^{X_n}_{\CO}\wedge T$. Rewrite this as $|\psi(\phi(X_n(t)))| \le \eps_0$. From the construction of the function $\psi$, we get: if $|\psi(s)| \le \de_{\CK}/2$, then $\psi(s) = s$. Therefore, $|\phi(X_n(t))| \le \eps_0$. But $\eps_0 \le \de_0 < \de_{\CK}/2$ by Lemma~\ref{lemma:prop-alpha}. The rest follows from Lemma~\ref{lemma:prop-beta} with $C_1 := c_3$ and $C_2 := c_4$. 

\smallskip

{\bf Proof of (c):} Follows from Lemma~\ref{lemma:prop-alpha} similarly to the proof of (b).

\smallskip

{\bf Proof of (d):} This is trivial, with $C_3 := c_2$. 

\smallskip

{\bf Proof of (e):} Follows from the fact that $(\cm_{\CK, n})_{n \ge 1}$, and therefore $(\cm_{\CO, n})_{n \ge 1}$, has a spike at zero. 

\smallskip

{\bf Proof of (f):} Uniformly on the set $\{z \in \CK\mid \eps_1 \le \phi(z) \le \eps_0\}$ (which is a compact subset of $D$), we have: $f_n \to 0$. Therefore, $\cm_{\CK, n}(s) \to 0$ uniformly for $s \in [\eps, \eps_0]$, and we can take $C_4$ to be any positive number. 

\smallskip

{\bf Proof of (g):} From the proof of (f), it follows that $f_n \to 0$ uniformly on the set 
$\CK'$. Moreover, as shown in the proof of Lemma~\ref{lemma:prop-alpha}, $\psi\circ\phi \equiv \phi$ on $\CK(\de_{\CK}/2)$, and therefore on a compact set $\{z \in \CK\mid |\phi(z)| \le \de_{\CK}/3\}$. 
The function $\phi$ is $C^2$ on an open set $\CU_0 \supseteq \CK(\de_{\CK}/2)$. Therefore, $\nabla\phi$ and $D^2\phi$ are bounded on $\{z \in \CK\mid |\phi(z)| \le \de_{\CK}/3\}$. Next, on an open set 
$$
\CO' := \{z \in \CO\mid \eps_1 < \phi(z) < \eps_0\} \subseteq \{z \in \CK\mid |\phi(z)| \le \de_{\CK}/3\},
$$
we have: $\nabla(\psi\circ\phi) \equiv \nabla\phi$, and $D^2(\psi\circ\phi) \equiv D^2\phi$. Therefore, these functions are also bounded on $\CO'$. The function $b$ is continuous and therefore bounded on $\CK'$. Finally, the sequence $(A_n)_{n \ge 1}$ is uniformly bounded on $\CK'$, because $A_n \to A$ uniformly on $\CK'$, and $A$ is continuous and therefore bounded on $\CK'$. The rest follows from~\eqref{eq:al-n}.


\subsection{Informal description of the proof of Lemma~\ref{lemma:master}} The proof resembles the one of \cite[Lemma 3.2]{MyOwn7}. If $Y_n$ were solutions of SDEs, we could apply scale functions to them and then proceed as in \cite[Lemma 3.2]{MyOwn7}. However, $Y_n$ are not solutions of SDEs, but general It\^o processes. We apply time-change to $Y_n$ to convert the martingale part into the Brownian motion, at least as long as the process $Y_n$ is in the interval $[-\eps_0, \eps_0]$. Then we compare this time-changed process $Y_n$ with a solution $\ol{Y}_n$ of an SDE:
$$
\md\ol{Y}_n(t) = \psi_n{\ga}_n(\ol{Y}_n(t))\,\md t + \md W_n(t),
$$
where $W_n$ is a Brownian motion, and $\psi_n$ is a suitable function. In Lemma~\ref{lemma:aux-important}, we apply the scale function to this new process $\ol{Y}_n$, and show that, at least for large enough $n$, it is unlikely to hit the level $-\de$ before hitting the level $\eps_0$. Using the Markov property, we show that $\ol{Y}_n$ is unlikely to hit the level $-\de$ before hitting the level $\eps_0$ a fixed number $j$ of times (the hitting of $\eps_0$ counts as new if the process $\ol{Y}_n$ has returned to the level $y_0 > 0$ before the next hitting of $\eps_0$). But it turns out that every return of $\ol{Y}_n$ from $y_0$ to $\eps_0$ takes a substantial amount of time. Therefore, for some large enough $j$ it takes a lot of time to return $j$ times from $y_0$ to $\eps_0$. But it is still unlikely that the process $\ol{Y}_n$ will hit $-\de$ before hitting $\eps_0$ $j$ times. Therefore, it is likely that it will take a lot of time for $\ol{Y}_n$ to hit $-\de$. This completes the proof. 

\subsection{Formal outline of the proof of Lemma~\ref{lemma:master}}

Now, let us carry out the proof in detail. First, we outline the proof as a sequence of lemmata, which we later prove one by one. Assume without loss of generality that $\de \le \eps_0$ and $\eps_1 \le y_0 \le \eps_0$. Define for $n = 1, 2, \ldots$ and $a \in \BR$:
$$
\tau_a^{(n)} := \inf\{t \ge 0\mid Y_n(t) = a\}.
$$
Without loss of generality, we can assume that $y_n = y_0$. Indeed, if $\tau^{(n)}_{y_0} \ge \tau_n$, then $\min_{t \in [0, \tau_n]}Y_n(t) \ge y_0 > 0 > -\de$, and there is nothing to prove. On the other hand, if $\tau^{(n)}_{y_0} < \tau_n$, then it suffices to prove the statement for the sequence of processes $Y_n\bigl(\cdot + \tau^{(n)}_{y_0}\bigr)$ instead of the original sequence of processes $Y_n$. But the new sequence of processes, all of which start from $y_0$, also has the same properties (a) - (g) from the statement of Lemma~\ref{lemma:master}.

\begin{lemma} There exists a sequence $(p(n))_{n \ge 1}$ of positive numbers depending only on 
$$
C_1, C_2, C_3, C_4, C_5, \eps_0, \eps_1, (\ol{\ga}_n)_{n \ge 1},
$$ 
such that $p(n) \to 0$ as $n \to \infty$, and 
\label{lemma:aux-important}
$$
\MP\left(\tau^{(n)}_{-\de}\wedge\tau_n \le \tau^{(n)}_{\eps_0}\right) \le p(n).
$$
\end{lemma}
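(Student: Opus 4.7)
My plan is first to apply the Dambis--Dubins--Schwarz theorem on the random interval $[0,\sigma_n]$ with $\sigma_n:=\tau^{(n)}_{-\de}\wedge\tau^{(n)}_{\eps_0}\wedge\tau_n$, on which assumption (b) guarantees $C_1\le|\rho_n|\le C_2$. Setting $A_n(t):=\int_0^t\rho_n^2(s)\,\md s$ and letting $\tilde B_n$ be the Brownian motion furnished by the time change, the rescaled process $\tilde Y_n(u):=Y_n(A_n^{-1}(u))$ satisfies
\[
\md\tilde Y_n(u)=\tilde\ga_n(u)\,\md u+\md\tilde B_n(u),\qquad \tilde\ga_n(u):=\frac{\ga_n(A_n^{-1}(u))}{\rho_n^2(A_n^{-1}(u))},
\]
on $u\in[0,A_n(\sigma_n)]$. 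Using (c) together with the two-sided bound on $\rho_n$, the drift admits the pointwise lower bound $\tilde\ga_n(u)\ge\hat\ga_n(\tilde Y_n(u))$, where $\hat\ga_n(y):=\ol\ga_n(y)/C_2^2$ on $\{\ol\ga_n\ge 0\}$ and $\hat\ga_n(y):=\ol\ga_n(y)/C_1^2$ on $\{\ol\ga_n<0\}$. Since (d) forces the negative part of $\ol\ga_n$ to be uniformly bounded, $\hat\ga_n$ still satisfies analogues of (d), (e), and (f) with adjusted constants.

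\textbf{Scale-function submartingale.} With
\[
F_n(z):=\int_0^z\hat\ga_n(w)\,\md w,\qquad s_n(y):=\int_0^y e^{-2F_n(z)}\,\md z,
\]
It\^o's formula will give
\[
\md s_n(\tilde Y_n(u))=s_n'(\tilde Y_n(u))\bigl[\tilde\ga_n(u)-\hat\ga_n(\tilde Y_n(u))\bigr]\md u+s_n'(\tilde Y_n(u))\,\md\tilde B_n(u),
\]
whose drift is nonnegative, so $s_n(\tilde Y_n)$ is a bounded submartingale on $[0,A_n(\sigma_n)]$. Optional stopping at $A_n(\sigma_n)$, combined with $\tilde Y_n(A_n(\sigma_n))=-\de$ on $\{\tau^{(n)}_{-\de}\le\tau^{(n)}_{\eps_0}\wedge\tau_n\}$ and $\tilde Y_n(A_n(\sigma_n))\le\eps_0$ elsewhere, gives the scale-function bound
\[
\MP\bigl(\tau^{(n)}_{-\de}\le\tau^{(n)}_{\eps_0}\wedge\tau_n\bigr)\le\frac{s_n(\eps_0)-s_n(y_0)}{s_n(\eps_0)-s_n(-\de)}.
\]

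\textbf{Proving the ratio vanishes.} I would next argue by cases according to where the spike in $\ol\ga_n$ sits. On $[y_0,\eps_0]\subseteq[\eps_1,\eps_0]$ the conditions (d) and (f) keep $|F_n(x)-F_n(y_0)|$ uniformly bounded in $x$ and $n$. If $\int_0^{y_0}\hat\ga_n\to\infty$, then $F_n(y_0)\to+\infty$, so the numerator $\int_{y_0}^{\eps_0}e^{-2F_n}\,\md x$ shrinks exponentially, while the denominator is bounded below by a positive constant because $F_n$ stays bounded on $[-\de,0]$ in that case. Otherwise $\int_{-(\de\wedge y_0)}^0\hat\ga_n\to\infty$ by (e), so $F_n(-\de)\to-\infty$; the denominator then explodes through the contribution on $[-\de,0]$ while the numerator remains bounded. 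In either scenario the ratio tends to $0$, giving a sequence $p(n)\to 0$ depending only on the stated quantities.

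\textbf{Main obstacle.} The awkward point is that the scheme above controls only $\MP(\tau^{(n)}_{-\de}\le\tau^{(n)}_{\eps_0}\wedge\tau_n)$, whereas the lemma's event also contains $\{\tau_n<\tau^{(n)}_{\eps_0}\wedge\tau^{(n)}_{-\de}\}$. I expect the real work to lie in absorbing this second piece---for instance by quantifying, via the spike, that the time-changed process $\tilde Y_n$ hits $\eps_0$ in a clock time shrinking with $n$, then translating back through $A_n(\tau_n)\ge C_1^2\tau_n$ to the original clock, uniformly in the arbitrary stopping time $\tau_n\le T$.
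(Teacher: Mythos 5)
Your overall scheme---DDS time-change, a state-dependent minorant $\hat\ga_n$ of the transformed drift, and a scale-function estimate built from that minorant---is the paper's scheme. The paper channels the minorant through an auxiliary SDE $\md\tilde Y_n=\psi_n(\tilde Y_n)\,\md u+\md W_n$ with $\psi_n(x)=C_2^{-2}\ol\ga_n(x)-C_1^{-2}C_3$ (a constant-shifted version of your piecewise $\hat\ga_n$), invokes the comparison theorem to get $\ol Y_n\ge\tilde Y_n$, and then uses the classical diffusion exit-probability formula; you instead apply It\^o's formula to $s_n$ of the time-changed process directly and use optional stopping for the resulting bounded submartingale. This is a modest streamlining that reaches the same ratio $\bigl(s_n(\eps_0)-s_n(y_0)\bigr)/\bigl(s_n(\eps_0)-s_n(-\de)\bigr)$, so the routes are essentially the same.

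Two remarks on the loose ends. First, the case split in ``proving the ratio vanishes'' is heuristic: the alternatives are neither disjoint nor exhaustive along the full sequence $n$, and in the first case the claim ``$F_n$ stays bounded on $[-\de,0]$'' is not right---the spike can push $F_n\to-\infty$ there. What you actually need, and is immediate from (d), is only that $F_n$ is bounded \emph{above} on $[-\de,0]$, so the denominator's integrand $e^{-2F_n}$ is bounded away from $0$. A cleaner, case-free version: recenter at $y_0$, so the numerator is bounded uniformly in $n$ (from (d), and (f) for $n$ large); for each $z\in(-\de,0)$ write $\int_z^{y_0}\hat\ga_n=\int_z^{-z}\hat\ga_n+\int_{-z}^{y_0}\hat\ga_n$, note the first integral tends to $\infty$ by (e) while the second stays bounded by (d) and (f), and conclude by Fatou that the recentered denominator tends to $\infty$. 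This also avoids the paper's recourse to an auxiliary sequence $b_n\downarrow 0$ from Lemma~\ref{lemma:b-n}.

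Second, your ``main obstacle'' points to a misprint in the lemma's statement, not a gap in your argument. As printed, the event $\{\tau^{(n)}_{-\de}\wedge\tau_n\le\tau^{(n)}_{\eps_0}\}$ contains $\{\tau_n\le\tau^{(n)}_{\eps_0}\wedge\tau^{(n)}_{-\de}\}$, whose probability cannot be forced to $0$ in general because $\tau_n$ is an arbitrary stopping time $\le T$ (take $\tau_n\equiv 0$). The paper's own proof establishes only the inclusion $\{\tau^{(n)}_{-\de}<\tau^{(n)}_{\eps_0}\wedge\tau_n\}\subseteq\{\tilde\tau^{(n)}_{-\de}<\tilde\tau^{(n)}_{\eps_0}\}$ and hence $\MP\bigl(\tau^{(n)}_{-\de}<\tau^{(n)}_{\eps_0}\wedge\tau_n\bigr)\le p(n)$, and that is exactly what Lemmata~\ref{lemma:comp-of-probabilities} and~\ref{lemma:pjp1} use (e.g.\ the base case $p_1(n)\le p(n)$ is precisely $\MP(\tau^{(n)}_{-\de}\le\tau^{(n)}_{\eps_0}\wedge\tau_n)\le p(n)$). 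So the intended statement is $\MP\bigl(\tau^{(n)}_{-\de}\le\tau^{(n)}_{\eps_0}\wedge\tau_n\bigr)\le p(n)$, which is precisely what your scheme proves; you need not ``absorb the second piece.''
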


Assuming we already proved this lemma, let us complete the proof of Lemma~\ref{lemma:master}. Let 
$$
\tau_{\eps_1}^{(n, j)} := \inf\{t \ge \tau_{\eps_0}^{(n, j)}\mid Y_n(t) = \eps_1\},\ 
j,\, n = 1, 2, \ldots;\  \tau_{\eps_1}^{(n, 0)} := 0;
$$
$$
\tau_{\eps_0}^{(n, j)} := \inf\{t \ge \tau_{\eps_1}^{(n, j-1)}\mid Y_n(t) = \eps_0\},
 \ \ j = 1, 2, \ldots;\ \ n = 1, 2, \ldots
$$
At time $\tau_{\eps_1}^{(n, j)}$, the process $Y_n$ returns for the first time to $\eps_1$ after it has $j$ times returned to $\eps_0$. At time $\tau_{\eps_0}^{(n, j)}$, the process $Y_n$ returns for the first time to $\eps_0$ after it has $j-1$ times returned to $\eps_1$. In particular, $\tau_{\eps_0}^{(n, 1)} = \tau_{\eps_0}^{(n)}$ is the first hitting time of $\eps_0$. 

\begin{lemma}
\label{lemma:1991}
There exist $c > 0$ and $p \in (0, 1)$ such that for all $n = 1, 2, \ldots$ we have:
$$
\MP(\tau_{\eps_0}^{(n)} \le c\wedge\tau_n\wedge\tau_{-\eps_0}^{(n)}) \le p.
$$
\end{lemma}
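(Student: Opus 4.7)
The plan is to show that, because condition (g) caps the drift on $(\eps_1,\eps_0)$ at $C_5$ and condition (b) caps the quadratic variation rate of the martingale part on $[-\eps_0,\eps_0]$, the process $Y_n$ cannot traverse the fixed upward distance $\Delta := \eps_0 - y_0 > 0$ in short time except on an event whose probability is bounded away from $1$, uniformly in $n$. The key step is to apply the drift bound on the last excursion of $Y_n$ inside $(\eps_1,\eps_0)$, then to bound the required fluctuation of the stochastic integral via Doob's $L^2$ maximal inequality.

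Set $\tau^* := \tau_n \wedge \tau_{\eps_0}^{(n)} \wedge \tau_{-\eps_0}^{(n)}$ and consider the stopped martingale $M_t := \int_0^{t\wedge\tau^*}\rho_n(s)\,\md B_n(s)$. Since $|Y_n(s)| \le \eps_0$ for $s \le \tau^*$, condition (b) gives $\langle M\rangle_t \le C_2^2 t$ for every $t \ge 0$. On the event $A_n := \{\tau_{\eps_0}^{(n)} \le c \wedge \tau_n \wedge \tau_{-\eps_0}^{(n)}\}$, introduce the last-passage time
$$
\sigma := \sup\bigl\{t\in[0,\tau_{\eps_0}^{(n)}]\,\bigm|\,Y_n(t)\le\eps_1\bigr\},\qquad \sup\varnothing:=0.
$$
Continuity of $Y_n$ yields $Y_n(\sigma)\le y_0$ in each case (if $\sigma=0$ then $Y_n(\sigma)=y_0$; if $\sigma>0$ then $Y_n(\sigma)=\eps_1\le y_0$), while $Y_n(s)\in(\eps_1,\eps_0]$ for $s\in(\sigma,\tau_{\eps_0}^{(n)}]$, so condition (g) gives $\ga_n(s)\le C_5$ almost everywhere on this interval. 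Integrating the SDE over $[\sigma,\tau_{\eps_0}^{(n)}]$,
$$
\Delta \le \eps_0 - Y_n(\sigma) = \int_\sigma^{\tau_{\eps_0}^{(n)}}\ga_n(s)\,\md s + \bigl(M_{\tau_{\eps_0}^{(n)}} - M_\sigma\bigr) \le C_5 c + \bigl(M_{\tau_{\eps_0}^{(n)}} - M_\sigma\bigr).
$$

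Taking $c \le \Delta/(2C_5)$ forces $M_{\tau_{\eps_0}^{(n)}} - M_\sigma \ge \Delta/2$ on $A_n$. The main subtlety is that $\sigma$ is a last-exit time rather than a stopping time; this is sidestepped by observing $0\le\sigma\le\tau_{\eps_0}^{(n)}\le c$ and dominating the increment by the deterministic-horizon path functional $\max_{0\le s\le t\le c}(M_t-M_s) \le 2\max_{u\le c}|M_u|$, a measurable quantity on $[0,c]$ alone. Applying Doob's $L^2$ maximal inequality to the martingale $M$ on $[0,c]$,
$$
\MP(A_n) \le \MP\Bigl(\max_{u\le c}|M_u|\ge \Delta/4\Bigr) \le \frac{16\cdot 4\,E[M_c^2]}{\Delta^2} \le \frac{64\,C_2^2\,c}{\Delta^2}.
$$
Setting $c := \min\bigl(\Delta/(2C_5),\,\Delta^2/(128\,C_2^2)\bigr)$, which depends only on $\Delta$, $C_2$, $C_5$ and not on $n$, yields $\MP(A_n)\le 1/2$, so the lemma holds with $p:=1/2$.
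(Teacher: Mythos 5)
Your proposal is correct, and its first half mirrors the paper's: on the event in question you isolate the final excursion of $Y_n$ inside $(\eps_1,\eps_0)$ before hitting $\eps_0$, and use condition (g) to confine the drift contribution to at most $C_5c$, forcing the martingale part to account for nearly the full gap $\eps_0-y_0$. The second half genuinely diverges. The paper time-changes $M_n$ into a standard Brownian motion, bounds the accumulated quadratic variation on the excursion by $C_2^2c$, and then appeals to the almost-sure modulus of continuity of Brownian paths, letting $c\downarrow 0$ so that the probability of a Brownian oscillation of size $\eps_0-\eps_1-cC_5$ over a window of width $C_2^2c$ tends to zero; you instead stay with the martingale $M$ directly, dominate the increment over the random (non-stopping-time) interval $[\si,\tau^{(n)}_{\eps_0}]$ by the pathwise quantity $2\max_{u\le c}|M_u|$, and apply Doob's $L^2$ maximal inequality to obtain the explicit estimate $\MP(A_n)\le 64C_2^2c/(\eps_0-y_0)^2$. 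Both routes are sound; yours is more quantitative (an explicit $c$ and $p=1/2$) and avoids the time-change machinery entirely, whereas the paper's is a softer continuity argument that requires no bookkeeping of constants. You are also slightly more careful with the starting point of the excursion: the paper asserts a time $t_1$ with $Y_n(t_1)=\eps_1$, which implicitly requires the process to actually visit $\eps_1$, whereas your last-passage time $\si$ (with $\sup\varnothing:=0$ and $Y_n(\si)\le y_0$) cleanly covers the case where the process goes straight from $y_0$ to $\eps_0$ without returning to $\eps_1$. One cosmetic remark: Doob's maximal probability inequality $\MP\bigl(\max_{u\le c}|M_u|\ge\la\bigr)\le\la^{-2}\ME[M_c^2]$ gives the same conclusion without routing through the $L^2$ inequality, replacing the constant $64$ by $16$.
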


This proves that there exists a copy of a Bernoulli random variable $\zeta_1$ such that for some $p_1 \le p$, 
\begin{equation}
\label{eq:distribution-of-eta}
\MP(\zeta_1 = c) = 1 - p_1,\ \MP(\zeta_1 = 0) = p_1,\ \ \mbox{and}\ \ \zeta_1\wedge\tau^{(n)}_{-\eps_0}\wedge\tau_n \le \tau^{(n)}_{\eps_0}\wedge\tau^{(n)}_{-\eps_0}\wedge\tau_n\ \ \mbox{a.s.}
\end{equation}

Let us prove a similar statement for the moment of the $j$th return to the level $\eps_0$: There exists a sequence $\zeta_1, \zeta_2, \ldots$ of Bernoulli random variables with distribution 
$$
\MP(\zeta_j = c) = 1 - p_j,\ \ \MP(\zeta_j = 0) = p_j,\ \ p_j \le p,\ \ j = 1, 2, \ldots,
$$
such that for each $n = 1, 2, \ldots$ and $j = 1, 2, \ldots$ we have:
\begin{equation}
\label{eq:comparison-of-times}
\left(\tau^{(n, j)}_{\eps_0} - \tau^{(n, j-1)}_{\eps_1}\right)\wedge\tau^{(n)}_{-\eps_0}\wedge\tau_n \ge \zeta_n\wedge\tau^{(n)}_{-\eps_0}\wedge\tau_n.
\end{equation}
Indeed, the moment $\left(\tau^{(n, j)}_{\eps_0} - \tau^{(n, j-1)}_{\eps_1}\right)\wedge\tau^{(n)}_{-\eps_0}\wedge\tau_n$ plays the same role for the process $Y_n(\cdot + \tau^{(n, j-1)}_{\eps_1})$ as the role of $\tau^{(n)}_{\eps_0}\wedge\tau^{(n)}_{-\eps_0}\wedge\tau_n$ for the original process $Y_n$. The sequence $(Y_n(\cdot + \tau^{(n, j-1)}_{\eps_1}))_{n \ge 1}$ has the same properties (a) - (g) as the process $Y_n$ and starts from $y_n = y_0 = \eps_1$. These random variables $\zeta_1, \zeta_2, \ldots$ can be taken to be independent, because the driving Brownian motions for the processes $Y_n(\cdot + \tau^{(n, j-1)}_{\eps_1})$ for different $j = 1, 2, \ldots$ are all independent (this follows from the Markov property of a Brownian motion). From~\eqref{eq:comparison-of-times} and $\tau^{(n, j-1)}_{\eps_1} \ge \tau^{(n, j-1)}_{\eps_0}$ it follows that 
\begin{equation}
\label{eq:new-comparison-of-times}
\left(\tau^{(n, j)}_{\eps_0} - \tau^{(n, j-1)}_{\eps_0}\right)\wedge\tau^{(n)}_{-\eps_0}\wedge\tau_n \ge \zeta_n\wedge\tau^{(n)}_{-\eps_0}\wedge\tau_n.
\end{equation}
Summing up~\eqref{eq:new-comparison-of-times} for $\tilde{j} = 1, \ldots, j$, and using the (easily checked) fact that $(a_1 + \ldots + a_j)\wedge c \le a_1\wedge c + \ldots + a_j\wedge c$ for $a_1, \ldots, a_j, c \ge 0$, we get:
\begin{equation}
\label{eq:comp-sequence-sum}
\tau^{(n, j)}_{\eps_0}\wedge\tau^{(n)}_{-\eps_0}\wedge\tau_n \ge \left(\zeta_1 + \ldots + \zeta_j\right)\wedge\tau^{(n)}_{-\eps_0}\wedge\tau_n.
\end{equation}
For $n, j = 1, 2, \ldots$ let $q_j := \MP\left(\zeta_1 + \ldots + \zeta_j \le T\right)$, and $p_j(n) := \MP\left(\tau^{(n)}_{-\de} \le \tau^{(n, j)}_{\eps_0}\wedge\tau_n\right)$. 

\begin{lemma} For every $n, j = 1, 2, \ldots$ we have: $\MP\bigl(\min\limits_{t \in [0, \tau_n]}Y_n(t) \le -\eta\bigr) \le q_j + p_j(n)$. 
\label{lemma:comp-of-probabilities}
\end{lemma}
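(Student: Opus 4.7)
The plan is to decompose the event $\{\min_{0 \le t \le \tau_n} Y_n(t) \le -\de\} = \{\tau^{(n)}_{-\de} \le \tau_n\}$ (reading the $-\eta$ in the statement as $-\de$ from Lemma~\ref{lemma:master}) according to whether $Y_n$ reaches level $-\de$ before or after having completed $j$ returns to $\eps_0$. Concretely, I would write
$$
\{\tau^{(n)}_{-\de} \le \tau_n\} = A_1 \cup A_2,\quad A_1 := \{\tau^{(n)}_{-\de} \le \tau^{(n,j)}_{\eps_0}\wedge\tau_n\},\quad A_2 := \{\tau^{(n,j)}_{\eps_0} < \tau^{(n)}_{-\de} \le \tau_n\}.
$$
The bound $\MP(A_1) \le p_j(n)$ is immediate from the definition of $p_j(n)$, so the remaining task is to bound $\MP(A_2) \le q_j$.

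The heart of the argument is to show $A_2 \subseteq \{\zeta_1 + \ldots + \zeta_j \le T\}$. On $A_2$ I would chain the inequalities $\tau^{(n,j)}_{\eps_0} < \tau^{(n)}_{-\de} \le \tau_n \le T$; since $\de \le \eps_0$ forces $\tau^{(n)}_{-\de} \le \tau^{(n)}_{-\eps_0}$, I also get $\tau^{(n,j)}_{\eps_0} < \tau^{(n)}_{-\eps_0}$. Together with $\tau^{(n,j)}_{\eps_0} \le \tau_n$, these three inequalities reduce the left-hand side of~\eqref{eq:comp-sequence-sum} to $\tau^{(n,j)}_{\eps_0}$, so that on $A_2$
$$
\tau^{(n,j)}_{\eps_0} \ge (\zeta_1 + \ldots + \zeta_j) \wedge \tau^{(n)}_{-\eps_0} \wedge \tau_n.
$$

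The delicate step is to rule out that the minimum on the right is attained at one of the stopping times. If $\tau^{(n)}_{-\eps_0}$ realized the minimum, then $\tau^{(n,j)}_{\eps_0} \ge \tau^{(n)}_{-\eps_0}$, contradicting $\tau^{(n,j)}_{\eps_0} < \tau^{(n)}_{-\eps_0}$; if $\tau_n$ realized the minimum, then $\tau^{(n,j)}_{\eps_0} \ge \tau_n$, contradicting $\tau^{(n,j)}_{\eps_0} < \tau^{(n)}_{-\de} \le \tau_n$. Hence the minimum is $\zeta_1 + \ldots + \zeta_j$ itself, giving $\zeta_1 + \ldots + \zeta_j \le \tau^{(n,j)}_{\eps_0} \le T$ on $A_2$, as required. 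Adding $\MP(A_1) \le p_j(n)$ and $\MP(A_2) \le q_j$ completes the proof. I do not expect any significant obstacle here beyond carefully tracking which of the three quantities attains the minimum in~\eqref{eq:comp-sequence-sum} on the event $A_2$.
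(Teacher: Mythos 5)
Your decomposition into $A_1 = \{\tau^{(n)}_{-\de} \le \tau^{(n,j)}_{\eps_0}\wedge\tau_n\}$ and $A_2 = \{\tau^{(n,j)}_{\eps_0} < \tau^{(n)}_{-\de} \le \tau_n\}$, the use of~\eqref{eq:comp-sequence-sum} on $A_2$, and the argument that the minimum there must be attained at $\zeta_1 + \ldots + \zeta_j$ all match the paper's own two-case proof; you simply spell out more explicitly the step the paper compresses into the chain $(\zeta_1 + \ldots + \zeta_j)\wedge\tau^{(n)}_{-\eps_0}\wedge\tau_n \le T\wedge\tau^{(n)}_{-\de}\wedge\tau_n \le T\wedge\tau^{(n)}_{-\eps_0}\wedge\tau_n$. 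Your reading of the statement's $-\eta$ as the $-\de$ of Lemma~\ref{lemma:master} is correct; the paper's own proof uses $-\de$.
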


\begin{lemma}
For every $n, j = 1, 2, \ldots$ we have: $p_j(n) \le jp(n)$. 
\label{lemma:pjp1}
\end{lemma}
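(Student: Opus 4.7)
My plan is to prove Lemma~\ref{lemma:pjp1} by induction on $j$, where the inductive step splits the event by the time of the $(j-1)$-st return to $\eps_0$ and uses the strong Markov property at the intermediate stopping time $\tau^{(n,j-1)}_{\eps_1}$ to reduce the incremental contribution to a single application of Lemma~\ref{lemma:aux-important}.

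The base case $j=1$ is immediate: since $\tau^{(n,1)}_{\eps_0}=\tau^{(n)}_{\eps_0}$, the event $\{\tau^{(n)}_{-\de}\le\tau^{(n)}_{\eps_0}\wedge\tau_n\}$ is contained in $\{\tau^{(n)}_{-\de}\wedge\tau_n\le\tau^{(n)}_{\eps_0}\}$, which by Lemma~\ref{lemma:aux-important} has probability at most $p(n)$. For the inductive step I would decompose
$$
\bigl\{\tau^{(n)}_{-\de}\le\tau^{(n,j)}_{\eps_0}\wedge\tau_n\bigr\}
=\bigl\{\tau^{(n)}_{-\de}\le\tau^{(n,j-1)}_{\eps_0}\wedge\tau_n\bigr\}\cup B_j,
$$
where $B_j:=\{\tau^{(n,j-1)}_{\eps_0}\wedge\tau_n<\tau^{(n)}_{-\de}\le\tau^{(n,j)}_{\eps_0}\wedge\tau_n\}$. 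The first set has probability $p_{j-1}(n)\le(j-1)p(n)$ by the inductive hypothesis, so it remains to estimate $\MP(B_j)\le p(n)$.

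For this, I would first observe that on $B_j$ the path of $Y_n$ travels continuously from $\eps_0$ to $\eps_1$ on $[\tau^{(n,j-1)}_{\eps_0},\tau^{(n,j-1)}_{\eps_1}]$, staying above $\eps_1>0>-\de$; hence $\tau^{(n,j-1)}_{\eps_1}<\tau^{(n)}_{-\de}\wedge\tau_n$ on $B_j$. I would then apply the strong Markov property at the stopping time $\tau^{(n,j-1)}_{\eps_1}$ and work with the shifted process $\tilde Y(s):=Y_n(s+\tau^{(n,j-1)}_{\eps_1})$ driven by the shifted Brownian motion, which starts at $\eps_1$. With $\tilde\tau_n:=\tau_n-\tau^{(n,j-1)}_{\eps_1}$ and $\tilde{\ol{\ga}}_n:=\ol{\ga}_n$, the shifted data inherits conditions (a)--(g) of Lemma~\ref{lemma:master} with the same constants $C_1,\ldots,C_5,\eps_0,\eps_1$. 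On $B_j$ the process $\tilde Y$ hits $-\de$ strictly before hitting $\eps_0$ and before $\tilde\tau_n$, so Lemma~\ref{lemma:aux-important} applied to $\tilde Y$ (using that $p(n)$ depends only on the listed parameters and not on the starting value) gives $\MP(B_j\mid\CF_{\tau^{(n,j-1)}_{\eps_1}})\le p(n)$ almost surely; integrating yields $\MP(B_j)\le p(n)$ and hence $p_j(n)\le jp(n)$.

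The main obstacle I anticipate is the rigorous verification that the shifted process inherits assumptions (a)--(g) with the \emph{same} constants so that the bound from Lemma~\ref{lemma:aux-important} applies without loss. This relies on the pointwise form of (c): the lower bound $\ol{\ga}_n(Y_n(t))$ depends on $Y_n(t)$ alone, not on $t$, so it transfers verbatim under the time shift; and on the strong Markov property, which provides a fresh Brownian motion so that the It\^o equation for $\tilde Y$ has the same structural form.
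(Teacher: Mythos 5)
Your proposal is correct and follows essentially the same route as the paper: induction on $j$, the decomposition of the event $\{\tau^{(n)}_{-\de}\le\tau^{(n,j)}_{\eps_0}\wedge\tau_n\}$ into the union of $\{\tau^{(n)}_{-\de}\le\tau^{(n,j-1)}_{\eps_0}\wedge\tau_n\}$ and the incremental event, and a time shift by $\tau^{(n,j-1)}_{\eps_1}$ combined with Lemma~\ref{lemma:aux-important}. Your version is actually a bit cleaner: you are explicit about the strong Markov property at $\tau^{(n,j-1)}_{\eps_1}$ and the conditional bound $\MP(B_j\mid\CF_{\tau^{(n,j-1)}_{\eps_1}})\le p(n)$, whereas the paper phrases this informally (and contains a couple of subscript typos, writing $\tau^{(n,j)}_{\eps_1}$ where $\tau^{(n,j-1)}_{\eps_1}$ is meant); and you correctly flag that one needs $p(n)$ to be applicable to the shifted process, which is resolved here by the earlier reduction in the proof of Lemma~\ref{lemma:master} that takes $y_0=\eps_1$ without loss of generality.
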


To finish the proof of Lemma~\ref{lemma:master}, combine Lemmata~\ref{lemma:comp-of-probabilities},~\ref{lemma:pjp1}, and get: for every $j, n = 1, 2, \ldots$
$$
\MP\Bigl(\min\limits_{t \in [0, \tau_n]}Y_n(t) \le -\eta\Bigr) \le q_j + jp(n). 
$$
We have: $\zeta_i \ge 0$ a.s. and $\zeta_1, \zeta_2, \ldots$ are independent random variables with $\ME \zeta_i = 1 - p_i > 1 - p > 0$, $i = 1, 2, \ldots$ It is an easy exercise to show that $q_j \to 0$ as $j \to \infty$. Now, fix a small number $\xi > 0$. Take $j$ large enough so that $q_j < \xi/2$, and an $n_0$ so that for $n \ge n_0$ we have: $p(n) < \xi/(2(j+1))$. For $n \ge n_0$, 
$$
\MP\Bigl(\min\limits_{t \in [0, \tau_n]}Y_n(t) \le -\eta\Bigr) \le \frac{\xi}2 + \frac{\xi}2 = \xi.
$$
This completes the proof of Lemma~\ref{lemma:master}. 

\subsection{Proof of Lemma~\ref{lemma:aux-important}} 

We apply time-change to turn the martingale term into a Brownian motion. Then we compare the drift coefficient (which is not exactly a function of $Y_n(t)$) to a certain function of $Y_n(t)$. This allows us to compare the time-changed process $Y_n$ with the  solution to a certain SDE. Finally, we apply the scale function to the new process and complete the proof. For $t \le \tau_n\wedge\tau^{(n)}_{-\eps_0}\wedge\tau^{(n)}_{\eps_0}$, we have: $|Y_n(t)| \le \eps_0$, and $C_1 \le |\rho_n(t)| \le C_2$. Consider the process
$$
M_n(t) := \int_0^t\rho_n(s)\,\md B_n(s).
$$
The stopped process $M_n(\cdot\wedge\tau_n\wedge\tau^{(n)}_{-\de}\wedge\tau^{(n)}_{\eps_0})$ is a square integrable martingale, with nondecreasing quadratic variation
$$
v_n(t) := \langle M_n(\cdot\wedge\tau_n\wedge\tau^{(n)}_{-\eps_0}\wedge\tau^{(n)}_{\eps_0})\rangle_t = \int_0^{t\wedge\tau_n\wedge\tau^{(n)}_{-\eps_0}\wedge\tau^{(n)}_{\eps_0}}\rho^2_n(s)\,\md s.
$$
Let $\eta_n(s) := \inf\{t \ge 0\mid v_n(t) \ge s\}$ be the inverse function. It is well-defined for $s \in [0, s_n]$, where $s_n := \langle M_n\rangle_{\tau_n\wedge\tau^{(n)}_{-\eps_0}\wedge\tau^{(n)}_{\eps_0}}$, since the function $t \mapsto \langle M_n\rangle_t$ is strictly increasing on $[0, \tau_n\wedge\tau^{(n)}_{-\eps_0}\wedge\tau^{(n)}_{\eps_0}]$. Then by \cite[Lemma 2]{MyOwn1}, for some Brownian motion $W_n = (W_n(t), t \ge 0)$, and for the time-changed process $\ol{Y}_n(s) \equiv Y_n(\eta_n(s))$, $s \le s_n$, we get:
\begin{equation}
\label{eq:437}
\md \ol{Y}_n(s) = \frac{\ga_n(\eta_n(s))}{\rho^2_n(\eta_n(s))}\md s + \md W_n(s).
\end{equation}
Now, for $s \le s_n$ we have: $\ga_n(\eta_n(s)) \ge \ol{\ga}_n(Y_n(\eta_n(s))) = \ol{\ga}_n(\ol{Y}_n(s))$. Let us estimate the drift in~\eqref{eq:437}:
$$
\frac{\ga_n(\eta_n(s))}{\rho^2_n(\eta_n(s))} \ge 
\begin{cases}
C_2^{-2}\ol{\ga}_n(\ol{Y}_n(s)),\ \ \mbox{if}\ \ \ol{\ga}_n(\ol{Y}_n(s)) \ge 0;\\
-C_3C_1^{-2},\ \ \mbox{if}\ \ \ol{\ga}_n(\ol{Y}_n(s)) < 0.
\end{cases}
$$
We can combine these two cases as follows. Denote
\begin{equation}
\label{eq:psi-n}
\psi_n(x) := C_2^{-2}\ol{\ga}_n(x) - C_1^{-2}C_3. 
\end{equation}
Then we get:
$$
\frac{\ga_n(\eta_n(s))}{\rho^2(\eta_n(s))} \ge \psi_n(\ol{Y}_n(s)),\ \ s \le s_n.
$$
Let $\tilde{Y}_n = (\tilde{Y}_n(s), s \ge 0)$ be the solution to the following equation:
$$
\md \tilde{Y}_n(s) = \psi_n(\tilde{Y}_n(s))\md s + \md W_n(s),\ \ \tilde{Y}_n(0) = y_0.
$$
For $n = 1, 2, \ldots$ and $a \in \BR$, define the following stopping times:
$$
\ol{\tau}^{(n)}_a := \inf\{t \ge 0\mid \ol{Y}_n(t) = a\},\ \ \tilde{\tau}^{(n)}_a := \inf\{t \ge 0\mid \tilde{Y}_n(t) = a\}.
$$
It is straightforward to show that for $a \in [-\eps_0, \eps_0]$, we have: 
\begin{equation}
\label{eq:stopping-times-relation}
\ol{\tau}^{(n)}_a\wedge v_n(\tau_n) = v_n(\tau^{(n)}_a\wedge\tau_n). 
\end{equation}
By comparison theorem, see for example \cite[Section 6.1]{IWBook}, we get: 
\begin{equation}
\label{eq:comparison-Y-tilde-Y}
\ol{Y}_n(s) \ge \tilde{Y}_n(s)\ \ \mbox{for}\ \ s \le s'_n := s_n\wedge\tilde{\tau}^{(n)}_{\eps_0}\wedge\tilde{\tau}^{(n)}_{-\eps_0}.
\end{equation}
Since we can compare the processes $\tilde{Y}_n$ and $\ol{Y}_n$, we can also compare their hitting times. 

\begin{lemma} We have: $\left\{\tau^{(n)}_{-\de} < \tau^{(n)}_{\eps_0}\wedge\tau_n\right\} \subseteq \left\{\tilde{\tau}^{(n)}_{-\de} < \tilde{\tau}^{(n)}_{\eps_0}\right\}$. 
\end{lemma}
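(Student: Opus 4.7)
My plan is to propagate the inequality $\tau^{(n)}_{-\de} < \tau^{(n)}_{\eps_0}\wedge\tau_n$ to $\tilde Y_n$ via two bridges already set up: first the time change $v_n$, relating $Y_n$ and $\ol Y_n$; then the comparison~\eqref{eq:comparison-Y-tilde-Y}, relating $\ol Y_n$ and $\tilde Y_n$. The ultimate target is to produce the chain $\tilde\tau^{(n)}_{-\de} \le \ol\tau^{(n)}_{-\de} < \ol\tau^{(n)}_{\eps_0} \le \tilde\tau^{(n)}_{\eps_0}$, whose middle strict inequality already delivers the event inclusion. The main subtlety is that~\eqref{eq:comparison-Y-tilde-Y} holds only on the random interval $[0,s'_n]$ with $s'_n = s_n\wedge\tilde\tau^{(n)}_{\eps_0}\wedge\tilde\tau^{(n)}_{-\eps_0}$, so a short case analysis is needed to identify where each hitting time of interest sits.

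\textit{Step 1: transfer to $\ol Y_n$.} I will fix $\omega$ in the event on the left and set $T_n := \tau_n \wedge \tau^{(n)}_{-\eps_0} \wedge \tau^{(n)}_{\eps_0}$. Two of the strict inequalities $\tau^{(n)}_{-\de} < T_n$ are given, and $\tau^{(n)}_{-\de} < \tau^{(n)}_{-\eps_0}$ follows from path continuity of $Y_n$ at the value $-\de > -\eps_0$. Since $|\rho_n|\ge C_1>0$ on $[0,T_n]$ by assumption (b), $v_n$ is strictly increasing on $[0,T_n]$ and equals the constant $s_n$ thereafter. Applying~\eqref{eq:stopping-times-relation} with $a = -\de$ and with $a = \eps_0$ then yields $\ol\tau^{(n)}_{-\de} = v_n(\tau^{(n)}_{-\de}) < s_n = v_n(\tau^{(n)}_{\eps_0}\wedge\tau_n) = \ol\tau^{(n)}_{\eps_0}\wedge s_n$, giving both $\ol\tau^{(n)}_{-\de} < s_n$ and $\ol\tau^{(n)}_{-\de} < \ol\tau^{(n)}_{\eps_0}$.

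\textit{Step 2: transfer to $\tilde Y_n$ via a short case split.} If $\tilde\tau^{(n)}_{-\eps_0} < \tilde\tau^{(n)}_{\eps_0}$, continuity of $\tilde Y_n$ from $y_0>0$ through $-\de$ down to $-\eps_0$ immediately gives $\tilde\tau^{(n)}_{-\de} < \tilde\tau^{(n)}_{-\eps_0} \le \tilde\tau^{(n)}_{\eps_0}$, and we are done. Otherwise $s'_n = s_n\wedge\tilde\tau^{(n)}_{\eps_0}$. Here I will sub-split. First, suppose $\tilde\tau^{(n)}_{\eps_0} \le \ol\tau^{(n)}_{-\de}$: then $\tilde\tau^{(n)}_{\eps_0} \le \ol\tau^{(n)}_{-\de} < s_n$, so $s'_n = \tilde\tau^{(n)}_{\eps_0}$, and~\eqref{eq:comparison-Y-tilde-Y} at $s = s'_n$ produces $\ol Y_n(\tilde\tau^{(n)}_{\eps_0}) \ge \eps_0$, forcing $\ol\tau^{(n)}_{\eps_0} \le \tilde\tau^{(n)}_{\eps_0} \le \ol\tau^{(n)}_{-\de}$ and contradicting Step 1; so this branch is vacuous. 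The remaining branch has $\ol\tau^{(n)}_{-\de} < \tilde\tau^{(n)}_{\eps_0}$, hence $\ol\tau^{(n)}_{-\de} < s_n\wedge\tilde\tau^{(n)}_{\eps_0} = s'_n$, and~\eqref{eq:comparison-Y-tilde-Y} at $s = \ol\tau^{(n)}_{-\de}$ yields $\tilde Y_n(\ol\tau^{(n)}_{-\de}) \le -\de$; continuity of $\tilde Y_n$ then gives $\tilde\tau^{(n)}_{-\de} \le \ol\tau^{(n)}_{-\de} < \tilde\tau^{(n)}_{\eps_0}$, closing the chain.

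\textit{Main obstacle.} No genuinely deep analysis is needed: the only delicate point is keeping track of the relative order of $s_n$, $\ol\tau^{(n)}_{-\de}$, $\tilde\tau^{(n)}_{\eps_0}$, and $\tilde\tau^{(n)}_{-\eps_0}$ so that~\eqref{eq:comparison-Y-tilde-Y} is only invoked where valid, and this is exactly what the case split above accomplishes. Strictness of the final inequality traces entirely to the strict monotonicity of $v_n$ on $[0,T_n]$ established in Step 1, which in turn comes from the uniform lower bound $|\rho_n|\ge C_1$ in assumption (b).
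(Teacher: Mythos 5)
Your argument is correct and uses exactly the same two ingredients as the paper: the time-change identity~\eqref{eq:stopping-times-relation} to pass from $Y_n$ to $\ol{Y}_n$, and the comparison~\eqref{eq:comparison-Y-tilde-Y} (valid only up to $s'_n$) to pass from $\ol{Y}_n$ to $\tilde{Y}_n$, with the necessary bookkeeping of the relative order of $s_n$, $\ol\tau^{(n)}_{-\de}$, $\tilde\tau^{(n)}_{\eps_0}$, $\tilde\tau^{(n)}_{-\eps_0}$. The only cosmetic difference is that you argue directly via a case split, whereas the paper assumes the contrary ($\tilde\tau^{(n)}_{-\de}\ge\tilde\tau^{(n)}_{\eps_0}$) and splits into the two corresponding cases at the point $\ol s_n = \ol\tau^{(n)}_{-\de}\wedge\tilde\tau^{(n)}_{\eps_0}$; these are the same dichotomy.
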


\begin{proof}
Assume $\tau^{(n)}_{-\de} < \tau^{(n)}_{\eps_0}\wedge\tau_n$. The function $v_n$ is strictly increasing on $[0, s_n]$. From~\eqref{eq:stopping-times-relation}, we have:
\begin{equation}
\label{eq:comp-100}
\ol{\tau}^{(n)}_{-\de} < \ol{\tau}^{(n)}_{\eps_0}\wedge v_n(\tau_n). 
\end{equation}
Let us show that $\tilde{\tau}^{(n)}_{-\de} < \tilde{\tau}^{(n)}_{\eps_0}$. Assume the converse. Denote 
$$
\ol{s}_n := \langle M_n\rangle_{\tau_n}\wedge\ol{\tau}^{(n)}_{\eps_0}\wedge\ol{\tau}^{(n)}_{-\de}\wedge\tilde{\tau}^{(n)}_{\eps_0}\wedge\tilde{\tau}^{(n)}_{-\de} \le s'_n. 
$$
From~\eqref{eq:comp-100} and our assmption, we have: $\ol{s}_n = \ol{\tau}^{(n)}_{-\de}\wedge\tilde{\tau}^{(n)}_{\eps_0}$. Now, consider the following two cases:

\smallskip

{\it Case 1:} $\ol{\tau}^{(n)}_{-\de} \le \tilde{\tau}^{(n)}_{\eps_0}$. Then 
$\ol{s}_n = \ol{\tau}^{(n)}_{-\de}$. Now, $\ol{Y}_n\left(\ol{s}_n\right) = -\de$. 
Therefore, by~\eqref{eq:comparison-Y-tilde-Y} we have: $\tilde{Y}_n\left(\ol{s}_n\right) \le -\de$. Hence $\tilde{\tau}^{(n)}_{-\de} \le \ol{s}_n \le \tilde{\tau}^{(n)}_{\eps_0}$. But $\ol{\tau}^{(n)}_{-\de} = \ol{\tau}^{(n)}_{\eps_0}$ cannot happen, because hitting times of two different levels by the same process cannot coincide. Therefore, we arrive at a contradiction. 

\smallskip

{\it Case 2:} $\ol{\tau}^{(n)}_{-\de} \ge \tilde{\tau}^{(n)}_{\eps_0}$. Then 
$\ol{s}_n = \tilde{\tau}^{(n)}_{\eps_0}$. Now, $\tilde{Y}_n\left(\ol{s}_n\right) = \eps_0$. Therefore, by~\eqref{eq:comparison-Y-tilde-Y} we have: $\ol{Y}_n\left(\ol{s}_n\right) = \eps_0$. Hence $\ol{\tau}^{(n)}_{\eps_0} \le \ol{s}_n \le \ol{\tau}^{(n)}_{-\de}$, which contradicts~\eqref{eq:comp-100}. 
\end{proof}

\begin{lemma} As $n \to \infty$, we have: $p(n) := \MP\left(\tilde{\tau}^{(n)}_{-\de} < \tilde{\tau}^{(n)}_{\eps_0}\right) \to 0$. 
\label{lemma:p-n-to-0}
\end{lemma}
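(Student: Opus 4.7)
My plan is to compute $p(n)$ via the scale function of the one-dimensional diffusion $\tilde Y_n$. Since $\tilde Y_n$ solves the SDE $\md\tilde Y_n(s) = \psi_n(\tilde Y_n(s))\,\md s + \md W_n(s)$ with unit diffusion coefficient, the standard scale function formula yields
$$
p(n) = \frac{S_n(\eps_0) - S_n(y_0)}{S_n(\eps_0) - S_n(-\de)},\qquad S_n(x) := \int_0^x e^{-H_n(u)}\,\md u,\ \ H_n(u) := 2\int_0^u \psi_n(v)\,\md v.
$$
Thus it suffices to show that $R_n := \int_{y_0}^{\eps_0} e^{-H_n(u)}\,\md u \,\big/\, \int_{-\de}^{y_0} e^{-H_n(u)}\,\md u \to 0$.

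I would begin with two universal estimates. First, on $[y_0,\eps_0]$, assumptions (d) and (f) applied with $\eps := y_0$ give $\varlimsup_{n}\sup_{[y_0,\eps_0]}|\ol{\ga}_n|<\infty$, hence $|\psi_n|\le C_7$ on this interval for all large $n$, and therefore $|H_n(u)-H_n(y_0)|\le C_8:=2C_7(\eps_0-y_0)$ for $u\in[y_0,\eps_0]$. Second, on $[-\de,0]$ the lower bound $\ol{\ga}_n\ge -C_3$ from (d) gives $\int_0^u \ol{\ga}_n(v)\,\md v = -\int_u^0\ol{\ga}_n(v)\,\md v \le C_3\de$, so $H_n(u)\le C_9$ for some constant $C_9$ independent of $n$; consequently the denominator of $R_n$ is bounded below by the positive constant $\de\,e^{-C_9}$ for every $n$. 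Also, by the definition~\eqref{eq:psi-n} of $\psi_n$ combined with (d), we have $\psi_n\ge -C_6$ on $[-\eps_0,\eps_0]$ for some $C_6>0$, so $H_n'\ge -2C_6$ on that interval.

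Next I would invoke a subsequence argument split into two cases according to the behavior of $H_n(y_0)$. \textbf{Case (i):} along a subsequence $H_n(y_0)\to\infty$. Then the first estimate gives $\int_{y_0}^{\eps_0}e^{-H_n(u)}\,\md u \le (\eps_0-y_0)e^{C_8}e^{-H_n(y_0)}\to 0$, while the denominator stays bounded below, so $R_n\to 0$. \textbf{Case (ii):} along a subsequence $H_n(y_0)$ stays bounded; equivalently $\int_0^{y_0}\ol{\ga}_n$ is bounded. Pick any $\eps\in(0,\min(\de,y_0))$; since $\int_\eps^{y_0}\ol{\ga}_n\ge -C_3(y_0-\eps)$ by (d), the quantity $\int_0^\eps\ol{\ga}_n$ is bounded above, and combined with (e) this forces $\int_{-\eps}^0\ol{\ga}_n\to\infty$, i.e.\ $H_n(-\eps)\to-\infty$. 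Using $H_n'\ge -2C_6$, the value of $H_n$ can increase by at most $2C_6\de$ as $u$ moves leftward from $-\eps$ to $-\de$, hence $H_n(u)\le H_n(-\eps)+2C_6\de\to -\infty$ uniformly for $u\in[-\de,-\eps]$; integrating yields $\int_{-\de}^{-\eps}e^{-H_n(u)}\,\md u\to\infty$, while the numerator stays bounded by the first estimate and bounded $H_n(y_0)$, so again $R_n\to 0$.

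The main obstacle is precisely setting up this dichotomy: neither ``numerator $\to 0$'' nor ``denominator $\to\infty$'' holds universally, because either can fail depending on whether the spike of $\ol{\ga}_n$ in (e) lies mostly to the right or to the left of the origin. Once the two cases are isolated, the universal denominator lower bound and the almost-monotonicity $H_n'\ge -2C_6$ carry each case through, and a standard subsequence principle upgrades per-subsequence convergence to $p(n)\to 0$.
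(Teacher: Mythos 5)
Your proof is correct, and it takes a genuinely different route from the paper's. Both arguments reduce the claim to controlling the scale function $S_n$, and both must confront the same obstacle: condition~(e) forces a ``spike'' in $\ol{\ga}_n$ near the origin, but gives no control over whether that spike sits mostly to the right of $0$ (making the numerator shrink) or mostly to the left of $0$ (making the denominator blow up). The paper resolves this by shifting the inner base point of the scale function to a sequence $b_n \to 0$, chosen in Lemma~\ref{lemma:b-n} so that $\psi_n$ is uniformly bounded above on $[b_n, \eps_0]$; this makes Fatou's lemma give $\varlimsup_n S_n(x) < \infty$ for $x > 0$ and $S_n(x) \to -\infty$ for $x < 0$ with no case analysis, at the price of proving and invoking Lemma~\ref{lemma:b-n}. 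You instead fix the base point at $0$, keep $S_n(x) = \int_0^x e^{-H_n(u)}\,\md u$, and split into cases according to whether $H_n(y_0)$ is bounded or tends to $+\infty$, finishing with the subsequence principle; this avoids Lemma~\ref{lemma:b-n} entirely and substitutes the almost-monotonicity bound $H_n' \ge -2C_6$ (coming from (d)) to propagate the blow-up from $-\eps$ to $[-\de, -\eps]$. Your approach is arguably more self-contained and elementary, since it needs only conditions (d)--(f) directly; the paper's approach packages the dichotomy invisibly inside the choice of $b_n$. One minor point worth recording: the sign on the constant in the paper's display following Lemma~\ref{lemma:scale-function} uses an upper bound on $\psi_n$ where a lower bound $\psi_n \ge -C$ is what is actually needed to majorize $e^{-2\int_{b_n}^y \psi_n}$; you sidestep this by using the lower bound $\psi_n \ge -C_6$ explicitly and correctly throughout.
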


\begin{rmk} Note that this probability depends only on the function $\psi_n$, which, in turn, depends only on the function $\ol{\ga}_n$ and constants $C_1, C_2, C_3$.
\end{rmk}

\begin{proof} The process $\tilde{Y}_n$ is a diffusion with scale function 
$$
S_n(x) := \int_{a_n}^xe^{-2\int_{b_n}^y\psi_n(z)\md z}\md y.
$$
One can choose $a_n$ and $b_n$ arbitrarily; we shall choose $a_n = y_0$, and $b_n$ from Lemma~\ref{lemma:b-n}. The sequence of functions $(\ol{\ga}_n)_{n \ge 1}$ has the property (e) from the statement of Lemma~\ref{lemma:master}. It is easy to show that the sequence $(\psi_n)_{n \ge 1}$ also has this property. Also, from the property (f) of $(\ol{\ga}_n)_{n \ge 1}$ follows a similar property of $(\psi_n)_{n \ge 1}$, but with a different constant $C_6 := C_2^{-2}C_4 - C_1^{-2}C_3$: $\sup_{y \in [b_n, \eps_0]}\psi_n(y)  \le C_6$. Take the sequence $(b_n)_{n \ge 1}$ from Lemma~\ref{lemma:b-n} (stated and proved below), applied to $g_n := \psi_n$. This choice of $b_n$ guarantees that $C_6$ is well-defined and independent of $n$. 

\begin{lemma} 
\label{lemma:scale-function}
We have:
\begin{equation}
\label{eq:x>0}
\varlimsup\limits_{n \to \infty}S_n(x) < \infty\ \ \mbox{for}\ \ x \in (0, \eps_0];
\end{equation}
\begin{equation}
\label{eq:x<0}
\lim\limits_{n \to \infty}S_n(x) = -\infty\ \ \mbox{for}\ \ x \in [-\eps_0, 0).
\end{equation}
\end{lemma}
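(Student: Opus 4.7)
The plan is to exploit three properties of $\psi_n = C_2^{-2}\ol{\ga}_n - C_1^{-2}C_3$ that are inherited from $(\ol{\ga}_n)_{n \ge 1}$: a uniform lower bound $\psi_n(z) \ge -C'$ on $[-\eps_0, \eps_0]$ (with $C' := (C_2^{-2} + C_1^{-2})C_3$, coming from (d)); the spike at zero $\int_{-\eps}^{\eps}\psi_n(z)\,\md z \to \infty$ for every $\eps \in (0, \eps_0)$ (from (e)); and, using the choice of $b_n$ in Lemma~\ref{lemma:b-n} together with property (f), the uniform bound $\sup_{z \in [b_n, \eps_0]}\psi_n(z) \le C_6$ for all $n$ large, where $b_n \to 0^+$. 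The idea is then to control the inner integral $\int_{b_n}^y\psi_n(z)\,\md z$ differently on either side of zero.

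For~\eqref{eq:x>0}, I would fix $x \in (0, \eps_0]$ and note that as soon as $n$ is large enough that $b_n < y_0 \wedge x$, every $y$ lying between $y_0$ and $x$ satisfies $y \ge b_n$. The lower bound on $\psi_n$ then gives $\int_{b_n}^y \psi_n(z)\,\md z \ge -C'\eps_0$, so the integrand defining $S_n(x)$ is bounded above by $e^{2C'\eps_0}$, and hence $|S_n(x)| \le e^{2C'\eps_0}\eps_0$ for all large $n$. This step is routine.

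The more delicate part is~\eqref{eq:x<0}. I would fix $x \in [-\eps_0, 0)$, choose an auxiliary $\eps \in (0, |x|/2)$, and take $n$ so large that $b_n < \eps$. For $y$ in the subinterval $[x, -\eps]$, so that $y \le -\eps < 0 < b_n < \eps$, I would split
$$
\int_y^{b_n}\psi_n(z)\,\md z = \int_y^{-\eps}\psi_n(z)\,\md z + \int_{-\eps}^{\eps}\psi_n(z)\,\md z - \int_{b_n}^{\eps}\psi_n(z)\,\md z,
$$
bound the first term from below by $-C'\eps_0$ using the lower bound on $\psi_n$, bound the third term from above by $C_6\eps_0$ using the upper bound on $[b_n, \eps_0]$, and let the middle term tend to $+\infty$ by the spike at zero. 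This produces $\int_y^{b_n}\psi_n(z)\,\md z \to \infty$ \emph{uniformly} in $y \in [x, -\eps]$, and hence $e^{-2\int_{b_n}^y\psi_n} \to \infty$ uniformly there. Since the integrand in the representation of $-S_n(x)$ is positive, restricting to the subinterval $[x, -\eps]$ gives
$$
-S_n(x) = \int_x^{y_0}e^{-2\int_{b_n}^y\psi_n(z)\,\md z}\,\md y \ge (-\eps - x)\min_{y \in [x, -\eps]}e^{-2\int_{b_n}^y\psi_n(z)\,\md z} \longrightarrow \infty,
$$
which yields~\eqref{eq:x<0}.

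The main obstacle will be making sure the spike contribution on $[-\eps, \eps]$ is not wiped out by the neighbouring pieces. On $[y, -\eps]$ we have no a priori upper bound on $\psi_n$ (only the lower bound), but this is exactly what we need when seeking a lower bound on $\int_y^{b_n}\psi_n$; on $[b_n, \eps]$ the situation is reversed and we need an upper bound, which is precisely what the choice of $b_n$ in Lemma~\ref{lemma:b-n} provides by upgrading (f) from bounds on each $[\eps, \eps_0]$ into a single bound valid down to $b_n$.
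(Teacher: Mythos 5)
Your proof is correct and follows essentially the same route as the paper: for $x>0$ use the uniform lower bound on $\psi_n$ to bound the integrand, and for $x<0$ isolate the spike on $[-\eps,\eps]$, control the tails using the lower bound on $\psi_n$ and the upper bound on $[b_n,\eps_0]$ furnished by Lemma~\ref{lemma:b-n}, and let the middle term blow up via property (e). The only cosmetic difference is that the paper invokes Fatou's lemma after establishing pointwise divergence of the inner integral, whereas you derive a uniform lower bound on $[x,-\eps]$ directly; both implementations are sound.
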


\begin{proof} It suffices to show~\eqref{eq:x>0} for $x \ge y_0$, because each
$S_n$ is strictly increasing.  For $x \in [y_0, \eps_0]$, by Fatou's lemma, we get:
$$
\varlimsup\limits_{n \to \infty}S_n(x) \le \int_{y_0}^x\varlimsup\limits_{n \to \infty}e^{2C_6(y-b_n)}\md y \le \int_{y_0}^xe^{2C_6y}\md y < \infty,
$$
which proves~\eqref{eq:x>0}. To prove~\eqref{eq:x<0}, it suffices to show that 
\begin{equation}
\label{eq:limit-rel}
\int_y^{b_n}\psi_n(z)\md z \to \infty,\ \ y \in [-\eps_0, 0).
\end{equation}
Indeed, then we can apply Fatou's lemma and finish the proof. But~\eqref{eq:limit-rel} follows from the property (e) of the sequence of functions $(\psi_n)_{n \ge 1}$ and from elementary calculations:
$$
\int_y^{b_n}\psi_n(z)\md z = \int_{-|y|}^{|y|}\psi_n(z)\md z - \int_{b_n}^{|y|}\psi_n(z)\md z \ge \int_{-|y|}^{|y|}\psi_n(z)\md z - |y|\cdot C_7 \to \infty.
$$
\end{proof}

Let us complete the proof of Lemma~\ref{lemma:aux-important}. The probabiltiy that the diffusion process $\tilde{Y}_n$, starting from $y_0$, hits $-\de$ before $\eps_0$, is equal to 
$$
\MP\left(\tilde{\tau}^{(n)}_{-\de} < \tilde{\tau}^{(n)}_{\eps_0}\right) = 
\frac{S_n(\eps) - S_n(y_0)}{S_n(\eps) - S_n(-\de)} \to 0,\ \ n \to \infty,
$$
because by Lemma~\ref{lemma:scale-function}, 
$$
\varlimsup\limits_{n \to \infty}S_n(\eps_0) < \infty,\ \ \varlimsup\limits_{n \to \infty}S_n(y_0) < \infty, \ \ \mbox{and}\ \ \lim\limits_{n \to \infty}S_n(-\de) = -\infty. 
$$
This completes the proof of Lemma~\ref{lemma:p-n-to-0}, together with the proof of Lemma~\ref{lemma:aux-important}. 
\end{proof}

\begin{lemma} Assume $(g_n)_{n \ge 1}$ is a sequence of functions $g_n : [-\eps_0, \eps_0] \to \BR$ such that 
\begin{equation}
\label{eq:limsup}
\varlimsup\limits_{n \to \infty}\sup\limits_{y \in [b, \eps_0]}g_n(y) \le C_0\ \ \mbox{for every}\ \ b \in (0, \eps_0).
\end{equation}
Then there exists a sequence $(b_n)_{n \ge 1}$ of positive real numbers such that $b_n \to 0$, and 
$$
\varlimsup\limits_{n \to \infty}\sup\limits_{y \in [b_n, \eps_0]}g_n(y) \le C_0.
$$
\label{lemma:b-n}
\end{lemma}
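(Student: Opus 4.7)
The plan is a standard diagonal construction. For each integer $k \ge 1$, apply the hypothesis \eqref{eq:limsup} with the specific choice $b = 1/k$ (assuming $k$ is large enough that $1/k < \eps_0$). This produces an integer $N_k$ such that
$$
\sup_{y \in [1/k,\, \eps_0]} g_n(y) \le C_0 + \frac{1}{k} \qquad \text{for all } n \ge N_k.
$$
By replacing $N_k$ by $\max(N_1, \ldots, N_k) + k$ if necessary, I may assume that the sequence $(N_k)_{k \ge 1}$ is strictly increasing and tends to infinity, while the bound above still holds.

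Next, I define the sequence $(b_n)_{n \ge 1}$ piecewise along the blocks determined by the $N_k$. Set $b_n := \eps_0$ for $n < N_1$, and for each $k \ge 1$ set $b_n := 1/k$ when $N_k \le n < N_{k+1}$. Since $N_k \to \infty$ and the value of $b_n$ on the $k$-th block is $1/k$, we have $b_n \to 0$ as $n \to \infty$, so the sequence has the required qualitative property.

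Finally, I verify the limsup bound. For any $n$ with $N_k \le n < N_{k+1}$ (so in particular $n \ge N_k$), the choice $b_n = 1/k$ gives
$$
\sup_{y \in [b_n,\, \eps_0]} g_n(y) = \sup_{y \in [1/k,\, \eps_0]} g_n(y) \le C_0 + \frac{1}{k}.
$$
Since $k \to \infty$ as $n \to \infty$, passing to the upper limit yields
$$
\varlimsup_{n \to \infty} \sup_{y \in [b_n,\, \eps_0]} g_n(y) \le C_0,
$$
which is the desired conclusion. There is essentially no serious obstacle here; the only thing to watch is that the $N_k$ are chosen to increase strictly so that the piecewise definition assigns $b_n$ a value on every $n$ and forces $b_n \to 0$, rather than letting $b_n$ stall at a positive value.
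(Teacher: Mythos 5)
Your argument is the same diagonal construction used in the paper: pick $N_k$ so that $\sup_{[1/k,\eps_0]} g_n \le C_0 + 1/k$ for $n \ge N_k$, make $(N_k)$ strictly increasing, and set $b_n = 1/k$ on the block $N_k \le n < N_{k+1}$. The only cosmetic difference is your choice of $b_n = \eps_0$ for $n < N_1$ (the paper uses $b_n = 1$), which is if anything a bit more careful about keeping $b_n$ inside the admissible range; the proof is correct.
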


\begin{proof} The proof is very similar to that of  \cite[Lemma 3.11]{MyOwn7}. For $b \in (0, \eps_0)$ and $n = 1, 2, \ldots$ let $M(b, n) := \sup_{y \in [b, \eps_0]}g_n(y)$. 
For every $k = 1, 2, \ldots$ there exists an $n_k$ such that for $n \ge n_k$, we have:
$$
\sup\limits_{y \in [k^{-1}, \eps_0]}g_n(y) \le C_0 + k^{-1}.
$$
Without loss of generality, we can take the sequence $(n_k)_{k \ge 1}$ to be strictly increasing: $n_{k} < n_{k+1}$. Define $(b_n)_{n \ge 1}$ as follows: $b_{n} = k^{-1}$ for $n_{k} \le  n < n_{k+1}$. For $n < n_1$, just let $b_n := 1$. Then we get: for $n_{k} \le  n < n_{k+1}$, $M(b_n, n) = M(k^{-1}, n) \le C_0 + b_n$. Since $b_n \to 0$, this completes the proof. 
\end{proof}

\subsection{Proof of Lemma~\ref{lemma:1991}}

We prove that for a certain standard Brownian motion $W_n$, 
\begin{equation}
\label{eq:subset-event}
\left\{\tau_{\eps_0}^{(n)} \le c\wedge\tau_n\wedge\tau_{-\de}^{(n)}\right\}
\subseteq \left\{\oa(W_n, [0, C_2^2T], C_2^2c) \ge \eps_0 - \eps_1 - cC_5\right\}.
\end{equation}
Assume that the following event happened:
$$
\tau_{\eps_0}^{(n)} \le c\wedge\tau_n\wedge\tau^{(n)}_{-\eps_0}.
$$
By continuity of $Y_n$, there exist $t_1$ and $t_2$ (dependent on $\oa \in \Oa$ and on $n$) such that $0 \le t_1 < t_2 \le c\wedge\tau_n\wedge\tau^{(n)}_{-\eps_0}$, and, in addition, $Y_n(t) \in [\eps_1, \eps_0]$ for $t \in [t_1, t_2]$, $Y_n(t_1) = \eps_1$, and $Y_n(t_2) = \eps_0$. Then we have:
$$
Y_n(t_2) - Y_n(t_1) = \int_{t_1}^{t_2}\ga_n(s)\md s + \int_{t_1}^{t_2}\rho_n(s)\md B_n(s).
$$
But $\ga_n(s) \le C_5$ for $s \in [t_1, t_2]$: this follows from the assumption (g) of Lemma~\ref{lemma:master}. Therefore, we get:
$$
\int_{t_1}^{t_2}\rho_n(s)\md B_n(s) \ge \eps_0 - \eps_1 - (t_2 - t_1)C_5 \ge \eps_0 - \eps_1 - cC_5. 
$$
We can make a time-change (see this carried in more detail earlier, in the proof of Lemma~\ref{lemma:aux-important}): for some standard Brownian motion $W_n = (W_n(s), s \ge 0)$, 
$$
M_n(t) := \int_0^t\rho_n(s)\md B_n(s) = W_n\left(\langle M_n\rangle_t\right),\ \ 
t \le \tau_n\wedge\tau^{(n)}_{-\eps_0}\wedge\tau^{(n)}_{\eps_0}.
$$
Also, we have the following estimate:
$$
\langle M_n\rangle_{t_2} - \langle M_n\rangle_{t_1} = \int_{t_1}^{t_2}\rho^2(s)\md s
\le C_2^2(t_2 - t_1) \le C_2^2c.
$$
Therefore, we have proved~\eqref{eq:subset-event}. Let $c \downarrow 0$. Then a.s. $\oa(W_n, [0, C_2^2T], C_2^2c) \to 0$. Meanwhile, $\eps_0 - \eps_1 - cC_5 \to \eps_0 - \eps_1 > 0$. Therefore,
$$
\MP\left(\oa(W_n, [0, C_2^2T], C_2^2c) \ge \eps_0 - \eps_1 - cC_5\right) \to 0.
$$
It suffices to note that the probability in the left-hand side does not depend on $n$.

\subsection{Proof of Lemma~\ref{lemma:comp-of-probabilities}}
Assume that the event $\{\min_{t \in [0, T\wedge\tau_n]}Y_n(t) \le -\de\}$ happened. Then we have: $\tau^{(n)}_{-\de} \le \tau_n$. Consider two cases:

\smallskip

{\it Case 1:} $\tau^{(n)}_{-\de}\wedge\tau_n \le \tau^{(n, j)}_{\eps_0}\wedge\tau_n$.
Therefore, $\tau^{(n)}_{-\de}\wedge\tau_n \le \tau^{(n, j)}_{\eps_0}$. But $\MP\left(\tau^{(n)}_{-\de}\wedge\tau_n \le \tau^{(n, j)}_{\eps_0}\right) \le p_j(n)$. 


\smallskip

{\it Case 2:} $\tau^{(n)}_{-\de}\wedge\tau_n > \tau^{(n, j)}_{\eps_0}\wedge\tau_n$. It follows from $\tau^{(n)}_{-\de} \le \tau_n$ that $\tau^{(n, j)}_{\eps_0}\wedge\tau_n < \tau_n$. We can rewrite this as $\tau^{(n, j)}_{\eps_0} < \tau_n$. Therefore,
$\tau^{(n, j)}_{\eps_0} < \tau^{(n)}_{-\de}\wedge\tau_n$. Comparing with~\eqref{eq:comp-sequence-sum}, we get:
$$
\left(\zeta_1 + \ldots + \zeta_j\right)\wedge\tau^{(n)}_{-\eps_0}\wedge\tau_n \le T\wedge\tau^{(n)}_{-\de}\wedge\tau_n \le T \wedge\tau^{(n)}_{-\eps_0}\wedge\tau_n.
$$
Therefore, $\zeta_1 + \ldots + \zeta_j \le T$. It suffices to note that $\MP(\zeta_1 + \ldots + \zeta_j \le T) = q_j$.  

%

\subsection{Proof of Lemma~\ref{lemma:pjp1}}
Apply induction by $j$. Base is trivial: $p_1(n) \le p(n)$. Induction step:
\begin{align*}
\MP&\left(\tau^{(n)}_{-\de} \le \tau^{(n, j)}_{\eps_0}\wedge\tau_n\right)  = 
\MP\left(\tau^{(n, j-1)}_{\eps_0} < \tau^{(n)}_{-\de} \le \tau^{(n, j)}_{\eps_0}\wedge\tau_n\right) + \MP\left(\tau^{(n)}_{-\de} \le \tau^{(n, j-1)}_{\eps_0}\wedge\tau_n\right) \\ & = \MP\left(\tau^{(n, j-1)}_{\eps_0} < \tau^{(n)}_{-\de} \le \tau^{(n, j)}_{\eps_0}\wedge\tau_n\right) + p_{j-1}(n).
\end{align*}
Assume that the following inequality holds:
\begin{equation}
\label{eq:comp-again}
\tau^{(n, j-1)}_{\eps_0} < \tau^{(n)}_{-\de}.
\end{equation}
Then $\tau^{(n, j-1)}_{\eps_1} < \tau^{(n)}_{-\de}$, because the process $Y_n$, starting from $\eps_0$ at the moment $\tau^{(n, j-1)}_{\eps_0}$, has to hit $\eps_1 \in (0, \eps_0)$ before it reaches the level $-\de$. The process $\tilde{Y}_n(\cdot) := Y_n(\cdot + \tau^{(n, j)}_{\eps_1})$ also satisfies the conditions of Lemma~\ref{lemma:master}. The moment $\tilde{\tau}^{(n)}_{\eps_0}$ for this new process is actually the moment $\tau^{(n, j)}_{\eps_0} - \tau^{(n, j)}_{\eps_1}$ for the original process $Y_n(\cdot)$. Meanwhile, the moment $\tilde{\tau}^{(n)}_{-\de}$ for this new process $\tilde{Y}_n$ satisfies $\tilde{\tau}^{(n)}_{-\de} = \tau^{(n)}_{-\de} - \tau^{(n, j-1)}_{\eps_1}$, if only the process $Y_n$ did not hit $-\de$ before $\tau^{(n, j)}_{\eps_1}$. But we assume this has not happened, because of~\eqref{eq:comp-again}. Therefore, we can apply results of Lemma~\ref{lemma:aux-important} and get:
$$
\MP\left(\tau^{(n, j-1)}_{\eps_0} < \tau^{(n)}_{-\de} \le \tau^{(n, j)}_{\eps_0}\wedge\tau_n\right) \le \MP\left(\tilde{\tau}^{(n)}_{-\de} \le \tilde{\tau}^{(n)}_{\eps_0}\wedge\tau_n\right) \le p(n). 
$$
Thus, $p_j(n) \le p_{j-1}(n) + p(n)$. The rest is trivial.

\section{Proof of Theorem~\ref{thm:3}} Let us show first that for every compact subset $\CK \subseteq \CU_0$, we have: 
\begin{equation}
\label{eq:tech}
\sup_{x \in \CK}\norm{r(y(x))} < \infty.
\end{equation}
Indeed, fix an $x_0 \in \CK$. Then for $x \in \CK$, we have:
$$
\norm{y(x)} \le \norm{y(x) - x} + \norm{x} \le \norm{y(x_0) - x}  + \norm{x} \le \norm{y(x_0) - x_0} + \norm{x_0} + 2\norm{x}.
$$
Taking $\sup$ over $x \in \CK$, we prove that $\sup_{x \in \CK}\norm{y(x)} < \infty$. Because $y$ is continuous on $\CU_0$, this completes the proof of~\eqref{eq:tech}.

\smallskip

Apply Theorems~\ref{thm:1} and~\ref{thm:2}. It suffices to show:

\smallskip

(a) for a compact subset $\CK \subseteq \CU$, the sequence $(\cm_{\CK, n})_{n \ge 1}$ has a spike at zero;

\smallskip

(b) for a compact subset $\CK \subseteq D$, $f_n \to 0$ uniformly on $\CK$;

\smallskip

(c) the sequence $(f_n)_{n \ge 1}$ emulates the reflection field $r$.

\smallskip

{\it Proof of (a):} For $z \in \pa D$, by Cauchy-Schwartz inequality we have: 
\begin{equation}
\label{eq:r-inequality}
\norm{r(z)} = \norm{r(z)}\norm{\fn(z)} \ge r(z)\cdot\fn(z) = 1.
\end{equation}
If $x \in \CK$ and $|\phi(x)| < \de_{\CK}$, then by Lemma~\ref{lemma:aux} $x \in \CU_0$, and $f_n(x) = g_n(\phi(x))r(y(x))$. Applying~\eqref{eq:r-inequality}, we get:
\begin{equation}
\label{eq:619}
\norm{f_n(x)} = |g_n(\phi(x))|\norm{r(y(x))} \ge g_n(\phi(x)).
\end{equation}
Now, let $s \in (-\de_{\CK}, \de_{\CK})$. Taking the $\inf$ over $x \in \CK$ such that $\phi(x) = s$ in~\eqref{eq:619}, and using~\eqref{eq:m-k-n}, we have: $\cm_{\CK, n}(s) \ge g_n(s)$. Since the sequence $(g_n)_{n \ge 1}$ has a spike at zero, $(\cm_{\CK, n})_{n \ge 1}$ does too.

\smallskip

{\it Proof of (b):} Since a continuous function $\phi$ is strictly positive on $\CK$, we have:
$$
0 < C_- := \inf_{z \in \CK}\phi(z) \le \sup_{z \in \CK}\phi(z) =: C_+ < \infty.
$$
Since $g_n \to 0$ uniformly on $[C_-, C_+]$, we have: $\sup_{x \in \CK}\norm{g_n(\phi(x))} \to 0$. Combining this observation with~\eqref{eq:tech}, we complete the proof. 

\smallskip

{\it Proof of (c):} It follows from the definition of $f_n$ from~\eqref{eq:f-n-example} that for $z \in \CU$, we have: 
$$
\frac{f_n(z)}{\norm{f_n(z)}} = \frac{r(y(z))}{\norm{r(y(z))}}.
$$
It suffices to note that $\CK(\de_{\CK}) \subseteq \CU$ by Lemma~\ref{lemma:aux}. The rest is trivial.

\section{Appendix}

In this section, we state some auxillary lemmata on tightness and weak convergence. They were used in the proofs earlier in the article. Some of the lemmata have straightforward proofs, which are omitted. Other lemmata have a bit more complicated proofs. 

\begin{lemma}
\label{lemma:conv-of-ito}
Take a sequence $(\xi_n)_{n \ge 0}$ of adapted processes $\xi_n = (\xi_n(t), 0 \le t \le T)$, which are bounded by a universal constant: $|\xi_n(t)| \le C$ for all $t \ge 0$ and $n = 0, 1, 2, \ldots$ Take a sequence of standard Brownian motions $(W_n)_{n \ge 0}$. Assume $\xi_n \to \xi_0$ a.s. for almost all $t \in [0, T]$ as $n \to \infty$, and $W_n \to W_0$ a.s. uniformly on $[0, T]$ as $n \to \infty$. Define 
$$
M_n(t) := \int_0^t\xi_n(s)\,\md W_n(s),\ \ n = 1, 2, \ldots,\ \ t \in [0, T].
$$
Then we have the following convergence as $n \to \infty$:
$$
\ME\max\limits_{0 \le t \le T}\left(M_n(t) - M_0(t)\right)^2 \to 0.
$$
As a corollary, for every random moment $\tau$ in $[0, T]$ (not necessarily a stopping time)  we have the following convergence in $L^2$: $M_n(\tau) \to M_0(\tau)$. 
\end{lemma}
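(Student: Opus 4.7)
The plan is to decompose $M_n(t) - M_0(t) = A_n(t) + B_n(t)$, with
\begin{equation*}
A_n(t) := \int_0^t(\xi_n - \xi_0)\,\md W_n,\qquad B_n(t) := \int_0^t\xi_0\,\md W_n - \int_0^t\xi_0\,\md W_0,
\end{equation*}
so that $A_n$ isolates variation in the integrand while $B_n$ isolates variation in the integrator. The corollary for an arbitrary random time $\tau \in [0,T]$ will follow immediately from the pointwise bound $(M_n(\tau) - M_0(\tau))^2 \le \max_{0\le t\le T}(M_n(t) - M_0(t))^2$, so it suffices to prove $L^2$-convergence of the running maxima of $A_n$ and $B_n$ to zero.

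The piece $A_n$ is the routine half. Since $A_n$ is a square integrable martingale with respect to the filtration of $W_n$, Doob's $L^2$ inequality combined with the It\^o isometry yields
\begin{equation*}
\ME\max_{0\le t\le T}A_n(t)^2 \le 4\ME\int_0^T(\xi_n(s) - \xi_0(s))^2\,\md s,
\end{equation*}
and the right-hand side tends to zero by bounded convergence, using the uniform bound $|\xi_n - \xi_0| \le 2C$ and the a.e.\ pointwise convergence $\xi_n(t)\to\xi_0(t)$.

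The main obstacle is $B_n$, because the two integrals in its definition use different integrators and their difference is not a martingale in a common filtration. The plan is to approximate $\xi_0$ in $L^2([0,T]\times\Oa)$ by bounded simple adapted processes $\xi_0^{(k)} = \sum_{i=0}^{N_k-1}\eta_i^{(k)}1_{(t_i^{(k)},t_{i+1}^{(k)}]}$ with $|\eta_i^{(k)}|\le C$, and split
\begin{equation*}
B_n(t) = \left[\int_0^t(\xi_0 - \xi_0^{(k)})\,\md W_n - \int_0^t(\xi_0 - \xi_0^{(k)})\,\md W_0\right] + \left[\int_0^t\xi_0^{(k)}\,\md W_n - \int_0^t\xi_0^{(k)}\,\md W_0\right].
\end{equation*}
The error bracket is controlled in $L^2$-supremum by $8\ME\int_0^T(\xi_0 - \xi_0^{(k)})^2\,\md s$ via Doob plus the It\^o isometry applied separately to each of the two martingales, and is hence small for large $k$ uniformly in $n$. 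The simple bracket reduces for each fixed $k$ to a finite linear combination of increments $(W_n - W_0)(t\wedge t_{i+1}^{(k)}) - (W_n - W_0)(t\wedge t_i^{(k)})$; by the assumed a.s.\ uniform convergence $W_n\to W_0$ this tends to zero uniformly in $t$ almost surely, and $L^2$-convergence of the supremum then follows by dominated convergence with dominating variable $4CN_k(\max_s\|W_n(s)\| + \max_s\|W_0(s)\|)$, whose second moment is uniformly bounded by Doob's inequality applied to a standard Brownian motion. Choosing $k$ large first and then letting $n\to\infty$ yields $\ME\max_t B_n(t)^2 \to 0$, completing the argument.
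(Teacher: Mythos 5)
Your decomposition $M_n - M_0 = A_n + B_n$, with $A_n$ isolating the change of integrand and $B_n$ the change of integrator, is exactly the paper's opening split, and you handle $A_n$ the same way (It\^o isometry plus bounded convergence, with Doob's $L^2$ inequality invoked up front rather than as a closing BDG step for the running supremum). Where you genuinely diverge is in $B_n$: the paper observes that $W_n - W_0$ is itself a continuous square-integrable martingale in the common filtration, so
$$
\ME\Bigl(\int_0^T\xi_0\,\md(W_n - W_0)\Bigr)^2 = \ME\int_0^T\xi_0^2\,\md\langle W_n - W_0\rangle_t \le C^2\,\ME\langle W_n - W_0\rangle_T,
$$
and the right-hand side is controlled via $\ME(W_n(T)-W_0(T))^2$, which tends to zero by combining a.s.\ convergence $W_n(T)\to W_0(T)$ with uniform integrability (a uniform fourth-moment bound). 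You instead approximate $\xi_0$ in $L^2$ by bounded adapted simple processes $\xi_0^{(k)}$, control the approximation error uniformly in $n$ via Doob plus the isometry, and reduce the simple part to a finite linear combination of increments of $W_n - W_0$ that vanishes uniformly in $t$ almost surely. Both routes are correct; the paper's is shorter because it bypasses simple approximation entirely, while yours is more elementary and avoids reasoning about $\langle W_n - W_0\rangle$.

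The one imprecision is in your closing limiting step for the simple bracket. Writing $Y_n := 4CN_k\bigl(\max_s\|W_n(s)\| + \max_s\|W_0(s)\|\bigr)$, you bound the running supremum of the simple bracket by $Y_n$ and then appeal to ``dominated convergence'' with a uniformly bounded second moment of $Y_n$. But $Y_n$ depends on $n$, so ordinary dominated convergence does not apply, and a uniform bound on $\ME Y_n^2$ only gives $\sup_n\ME\max_t(\text{simple bracket})^2 < \infty$, not that this expectation tends to zero. What is actually needed is uniform integrability of the family $\bigl\{\max_t(\text{simple bracket})^2\bigr\}_{n\ge 1}$, which requires a uniform bound on a moment of order strictly greater than $2$ --- e.g.\ $\sup_n\ME Y_n^4 < \infty$, which does hold since Brownian running maxima have all moments, uniformly in $n$, by Doob's $L^p$ inequality. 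The fix is trivial, and in fact the paper faces the identical issue for $(W_n(T)-W_0(T))^2$ and resolves it by explicitly checking fourth moments. Phrase the step as a.s.\ convergence to zero combined with uniform integrability from a uniform fourth-moment bound.
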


\begin{proof} We can represent
\begin{align*}
\ME&(M_n(T) - M_0(T))^2 = \ME\left(\int_0^T\xi_n\,\md W_n(t) - \int_0^T\xi_0(t)\,\md W_0(t)\right)^2 \\ & \le 2\ME\left(\int_0^T\left(\xi_n-\xi_0\right)\,\md W_n(t)\right)^2 + 2\ME\left(\int_0^T\xi_0(t)\,\md\left(W_n(t) - W_0(t)\right)\right)^2 \\ & \le 
2\int_0^T\ME\left(\xi_n(t) - \xi_0(t)\right)^2\,\md t + 2\ME\int_0^T\xi_0^2(t)\,\md\langle W_n - W_0\rangle_t.
\end{align*}
The first term tends to zero because of Lebesgue dominated convergence theorem (applied twice, to the time integral and the expectation). The second term: since $W_n - W_0$ is a continuous square-integrable martingale, by Burkholder-Davis-Gundy inequality, \cite[Chapter 3, Theorem 3.28]{KSBook}, 
\begin{equation}
\label{eq:BDG}
\ME\langle W_n - W_0\rangle_T \le C_0\,\ME(W_n(T) - W_0(T))^2.
\end{equation}
Here, $C_0$ is some universal constant. Also, $W_n(T) \to W_0(T)$ a.s. To prove that 
\begin{equation}
\label{eq:0065}
\ME (W_n(T) - W_0(T))^2 \to 0,
\end{equation}
we need to show that the family $((W_n(T) - W_0(T))^2)_{n \ge 1}$ is uniformly integrable. But this is true, because $(a + b)^4 \le 8(a^4 + b^4)$ for all $a, b \in \BR$, and therefore
$$
\ME(W_n(T) - W_0(T))^4 \le 8\left(\ME W_n^4(T) + \ME W_0^4(T)\right) = 8\left(3T^2 + 3T^2\right) = 48T^2 < \infty.
$$
This proves~\eqref{eq:0065}. From~\eqref{eq:BDG} and~\eqref{eq:0065}, we get: $\ME\langle W_n - W_0\rangle_T \to 0$. Thus,
$$
\ME\int_0^T\xi_0^2(t)\,\md\langle W_n - W_0\rangle_t \le C^2\ME\langle W_n - W_0\rangle_T \to 0.
$$
This proves that $\lim_{n \to \infty}\ME(M_n(T) - M_0(T))^2 = 0$. But $M_n - M_0$ is a square-integrable martingale. Applying Burkholder-Davis-Gundy inequality again,  \cite[Chapter 3, Theorem 3.28]{KSBook}, we have:
$$
\ME\max\limits_{0 \le t \le T}(M_n(t) - M_0(t))^2 \le K\,\ME(M_n(T) - M_0(T))^2 \to 0.
$$
\end{proof}

\begin{lemma} Suppose $h : [0, T] \to \BR^d$ is continuous, and $l_n, l$ are continuous and nondecreasing functions $[0, T] \to \BR$ with $l_n(0) = l(0) = 0$. Assume also that $l_n \to l$ uniformly on $[0, T]$. Then 
$$
\int_0^Th(s)\md l_n(s) \to \int_0^Th(s)\md l(s),\ \ \mbox{as}\ \ n \to \infty.
$$
\label{lemma:weak-like-conv}
\end{lemma}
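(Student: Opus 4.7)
The plan is a standard three-epsilon argument: approximate the continuous integrand $h$ by a step function to reduce the problem to convergence of finite linear combinations of values $l_n(t_i)$, which is immediate from uniform convergence $l_n \to l$, and then control the approximation error by the total mass $l_n(t)$ (which is bounded uniformly in $n$). Since the statement is coordinatewise, I would assume without loss of generality that $h : [0,T] \to \BR$ is real-valued.

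Fix $t \in [0, T]$ and $\eps > 0$. First, I would use uniform continuity of $h$ on $[0, T]$ to pick a partition $0 = t_0 < t_1 < \ldots < t_N = t$ with mesh so small that $|h(s) - h(t_i)| < \eps$ for $s \in [t_i, t_{i+1}]$, $i = 0, \ldots, N - 1$. Define the step function $\tilde h(s) := h(t_i)$ for $s \in [t_i, t_{i+1})$, with $\tilde h(t) := h(t)$. Then $\sup_{s \in [0,t]} |h(s) - \tilde h(s)| \le \eps$. For any continuous nondecreasing $\mu : [0,T] \to \BR$ with $\mu(0) = 0$, by definition of the Lebesgue–Stieltjes integral,
\begin{equation*}
\int_0^t \tilde h(s)\,\md \mu(s) = \sum_{i=0}^{N-1} h(t_i)\bigl(\mu(t_{i+1}) - \mu(t_i)\bigr).
\end{equation*}

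Next, I would split the error into three pieces:
\begin{align*}
\Bigl|\int_0^t h\,\md l_n - \int_0^t h\,\md l\Bigr| &\le \Bigl|\int_0^t (h - \tilde h)\,\md l_n\Bigr| + \Bigl|\int_0^t \tilde h\,\md l_n - \int_0^t \tilde h\,\md l\Bigr| \\
&\quad + \Bigl|\int_0^t (\tilde h - h)\,\md l\Bigr|.
\end{align*}
The first and third terms are bounded by $\eps\, l_n(t)$ and $\eps\, l(t)$ respectively, using $|\int f\,\md\mu| \le \sup|f|\cdot\mu(t)$ for nondecreasing $\mu$. The middle term equals $\bigl|\sum_{i=0}^{N-1} h(t_i)\bigl[(l_n(t_{i+1}) - l_n(t_i)) - (l(t_{i+1}) - l(t_i))\bigr]\bigr|$, which is a finite sum and tends to $0$ as $n \to \infty$ by the uniform (hence pointwise) convergence $l_n \to l$.

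Finally, since $l_n(t) \to l(t)$, the sequence $(l_n(t))_{n \ge 1}$ is bounded; let $C := \sup_n l_n(t) + l(t) < \infty$. Taking $\varlimsup_{n \to \infty}$ in the three-term inequality, the middle term vanishes and we obtain $\varlimsup_{n \to \infty}|\int_0^t h\,\md l_n - \int_0^t h\,\md l| \le 2C\eps$. Letting $\eps \downarrow 0$ completes the proof. There is no real obstacle here; the only mild care point is to confirm that the Stieltjes integral against a continuous nondecreasing function agrees with the evaluation on step functions, which is immediate from the definition as an integral against the Borel measure induced by $l$ (or $l_n$).
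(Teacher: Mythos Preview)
Your proof is correct and entirely self-contained; the three-term splitting with a step-function approximation is the standard real-analysis route, and every estimate you use is valid.

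The paper takes a different, probabilistic shortcut: it normalizes so that $l_n(T) = l(T) = 1$, reinterprets $l_n$ and $l$ as cumulative distribution functions of $[0,T]$-valued random variables $\xi_n$ and $\xi$, reads uniform convergence of CDFs as weak convergence $\xi_n \Rightarrow \xi$, and then invokes the definition of weak convergence against the bounded continuous test function $h$ to conclude $\ME h(\xi_n) \to \ME h(\xi)$. Your argument is more elementary in that it avoids any appeal to weak-convergence machinery and does not require the normalization step (which in the paper needs a side remark when $l(T) = 0$); the paper's argument is shorter once one is willing to quote the portmanteau theorem. Both approaches ultimately rest on the same two facts: uniform continuity of $h$ and uniform boundedness of the total masses $l_n(t)$.
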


\begin{proof} Without loss of generality, assume $l_n(T) = l(T) = 1$. Otherwise, we can always divide $l_n(\cdot)$ by $l_n(T)$, and $l(\cdot)$ by $l(T)$, and use $l_n(T) \to l(T)$. 
Then one can consider $l_n$ and $l$ as a cumulative distribution functions of random variables $\xi_n, \xi \in [0, T]$. We have: $\xi_n \Ra \xi$ as $n \to \infty$. Since $h$ is continuous and therefore bounded on $[0, T]$, we have:
$$
\ME h(\xi_n) = \int_0^Th(s)\md l_n(s) \to \ME h(\xi) = \int_0^Th(s)\md l(s).
$$
\end{proof}

The following lemmata follow from straightforward applications of the Arzela-Ascoli theorem.

\begin{lemma}
\label{lemma:three-processes-tight}
Take three sequences of $C[0, T]$-valued random variables 
$$
(X_n)_{n \ge 1},\ (X'_n)_{n \ge 1},\ (X''_n)_{n \ge 1}.
$$
Assume that initial conditions are zero:
$$
X_n(0) = X'_n(0) = X''_n(0) = 0,\ \ n = 1, 2,\ldots
$$
Assume also that for all $0 \le s \le t \le T$, $n = 1, 2, \ldots$
$$
X''_n(t) - X''_n(s) \le X_n(t) - X_n(s) \le X'_n(t) - X'_n(s).
$$
If the sequences $(X'_n)_{n \ge 1}$ and $(X''_n)_{n \ge 1}$ are tight, then $(X_n)_{n \ge 1}$ is also tight.
\end{lemma}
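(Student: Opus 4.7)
The goal is standard tightness via the Arzel\`a--Ascoli criterion in $C[0,T]$: it suffices to check that (i) the initial values $\{X_n(0)\}$ form a tight family (trivial here since $X_n(0)=0$ for all $n$), and (ii) for every $\eps > 0$,
\[
\lim_{\de \to 0}\sup_{n \ge 1}\MP\bigl(\oa(X_n,[0,T],\de) > \eps\bigr) = 0.
\]
The proof therefore reduces to controlling the modulus of continuity of $X_n$ by those of $X'_n$ and $X''_n$.

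\textbf{Step 1 (pointwise sandwich of differences).} Fix $0 \le s \le t \le T$. From the hypothesis
\[
X''_n(t) - X''_n(s) \le X_n(t) - X_n(s) \le X'_n(t) - X'_n(s),
\]
I distinguish two cases. If $X_n(t) \ge X_n(s)$, then the upper bound gives $0 \le X_n(t)-X_n(s) \le X'_n(t)-X'_n(s)$, so $|X_n(t)-X_n(s)| \le |X'_n(t)-X'_n(s)|$. If $X_n(t) < X_n(s)$, then the lower bound gives $X''_n(t)-X''_n(s) \le X_n(t)-X_n(s) < 0$, so $|X_n(t)-X_n(s)| \le |X''_n(t)-X''_n(s)|$. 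In either case,
\[
|X_n(t) - X_n(s)| \le |X'_n(t) - X'_n(s)| + |X''_n(t) - X''_n(s)|.
\]

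\textbf{Step 2 (modulus inequality).} Taking supremum over pairs $s,t \in [0,T]$ with $|t-s| \le \de$, I obtain the deterministic inequality
\[
\oa(X_n,[0,T],\de) \le \oa(X'_n,[0,T],\de) + \oa(X''_n,[0,T],\de).
\]
In particular, $\{\oa(X_n,[0,T],\de) > \eps\} \subseteq \{\oa(X'_n,[0,T],\de) > \eps/2\}\cup\{\oa(X''_n,[0,T],\de) > \eps/2\}$, so by a union bound
\[
\MP\bigl(\oa(X_n,[0,T],\de) > \eps\bigr) \le \MP\bigl(\oa(X'_n,[0,T],\de) > \tfrac{\eps}{2}\bigr) + \MP\bigl(\oa(X''_n,[0,T],\de) > \tfrac{\eps}{2}\bigr).
\]

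\textbf{Step 3 (conclusion).} Since $(X'_n)_{n \ge 1}$ and $(X''_n)_{n \ge 1}$ are tight in $C[0,T]$ with $X'_n(0)=X''_n(0)=0$, the Arzel\`a--Ascoli criterion applied to each sequence yields
\[
\lim_{\de \to 0}\sup_{n \ge 1}\MP\bigl(\oa(X'_n,[0,T],\de) > \tfrac{\eps}{2}\bigr) = \lim_{\de \to 0}\sup_{n \ge 1}\MP\bigl(\oa(X''_n,[0,T],\de) > \tfrac{\eps}{2}\bigr) = 0.
\]
Combined with the union bound in Step 2, this gives the required modulus-of-continuity control for $(X_n)_{n \ge 1}$, and combined with $X_n(0)=0$, another application of Arzel\`a--Ascoli completes the proof. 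No step here is a serious obstacle; the main observation is the case-split in Step 1, which is what makes the two-sided sandwich translate into a genuine bound on the absolute increment.
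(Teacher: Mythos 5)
Your proof is correct, and it follows precisely the approach the paper intends: the paper omits the argument, remarking only that this lemma (together with the neighboring ones) ``follows from straightforward applications of the Arzel\`a--Ascoli theorem,'' and that is exactly what you do. The one nontrivial observation is your case-split in Step~1, which converts the one-sided sandwich on increments into the two-sided bound $|X_n(t)-X_n(s)| \le |X'_n(t)-X'_n(s)| + |X''_n(t)-X''_n(s)|$; from there the modulus-of-continuity estimate and the union bound are routine.
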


\begin{lemma}
\label{lemma:bounded-tight}
Take a sequence of $\BR^d$-valued a.s. integrable processes $(\be_n(t), 0 \le t \le T)$. 
If the following sequence of processes is tight in $C[0, T]$:
$$
t \mapsto \int_0^t\norm{\be_n(s)}\md s,\ \ n = 1, 2, \ldots
$$
then the following sequence of processes is tight in $C([0, T], \BR^d)$:
$$
t \mapsto \int_0^t\be_n(s)\md s,\ \ n = 1, 2, \ldots
$$
\end{lemma}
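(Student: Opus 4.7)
The plan is to pass from tightness of the scalar integrals of $\norm{\be_n(\cdot)}$ to tightness of the vector integrals of $\be_n(\cdot)$ by a direct Arzela--Ascoli argument, using the triangle inequality in integral form. Denote
$$
X_n(t) := \int_0^t\be_n(s)\,\md s,\qquad Y_n(t) := \int_0^t\norm{\be_n(s)}\,\md s,\qquad 0 \le t \le T.
$$
The key observation is that for all $0 \le s \le t \le T$ and every $n$, the triangle inequality for $\BR^d$-valued integrals gives
$$
\norm{X_n(t) - X_n(s)} = \norm{\int_s^t\be_n(u)\,\md u} \le \int_s^t\norm{\be_n(u)}\,\md u = Y_n(t) - Y_n(s).
$$
In particular, $\oa(X_n, [0, T], \de) \le \oa(Y_n, [0, T], \de)$ for every $\de > 0$, and $\norm{X_n(t)} \le Y_n(t)$ for every $t$, with $X_n(0) = 0 = Y_n(0)$.

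First, I would verify the one-point tightness of $(X_n(0))_{n \ge 1}$, which is immediate since $X_n(0) = 0$. Next, I would translate tightness of $(Y_n)_{n \ge 1}$ in $C[0, T]$ into the standard Arzela--Ascoli two conditions: for every $\eta > 0$ there is a compact $\CK_\eta \subset C[0, T]$ with $\MP(Y_n \in \CK_\eta) \ge 1 - \eta$ for all $n$, and consequently, for every $\eps > 0$,
$$
\lim\limits_{\de \to 0}\sup\limits_{n \ge 1}\MP\bigl(\oa(Y_n, [0, T], \de) > \eps\bigr) = 0,
$$
together with boundedness of $(Y_n(T))_{n \ge 1}$ in probability. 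Both properties transfer to $(X_n)_{n \ge 1}$ componentwise: the modulus bound above hands over the equicontinuity condition, and $\norm{X_n(t)} \le Y_n(T)$ hands over the boundedness condition at each $t$ in $\BR^d$.

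Combining these two ingredients with the Arzela--Ascoli criterion for tightness in $C([0, T], \BR^d)$, I conclude that $(X_n)_{n \ge 1}$ is tight. There is essentially no obstacle here; the only point worth stating carefully is the integral triangle inequality used above, which is standard for $\BR^d$-valued Bochner (or even Lebesgue componentwise) integrals and is valid for any a.s.\ integrable $\be_n$. Everything else is a mechanical transfer of the Arzela--Ascoli conditions from $(Y_n)_{n \ge 1}$ to $(X_n)_{n \ge 1}$.
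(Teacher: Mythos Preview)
Your proposal is correct and matches the paper's approach: the paper omits the proof, stating only that it follows from a straightforward application of the Arzela--Ascoli theorem, which is precisely the argument you wrote out via the integral triangle inequality $\norm{X_n(t)-X_n(s)} \le Y_n(t)-Y_n(s)$ and the transfer of the modulus-of-continuity and one-point conditions.
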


 \begin{lemma}
\label{lemma:cont-mapping-tight}
Take a continuous function $\psi : \BR \to \BR$. If the sequence of processes $(X_n)_{n \ge 1}$ is tight in $C[0, T]$, then the sequence $(\psi(X_n(\cdot)))_{n \ge 1}$ is also tight in $C[0, T]$. 
\end{lemma}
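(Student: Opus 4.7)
The plan is to recognize the lemma as an instance of the continuous mapping principle for tightness: continuous images of tight sequences are tight. I will set up a composition map and verify its continuity, then push forward a tight sequence of compact probability-containing sets.

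Step one: define $\Psi : C[0,T] \to C[0,T]$ by $\Psi(f)(t) := \psi(f(t))$. I need to check $\Psi$ is continuous with respect to the uniform norm. Suppose $f_n \to f$ in $C[0,T]$. Uniform convergence forces $\sup_n \|f_n\|_\infty$ to be finite; pick $M > 0$ with $f(t), f_n(t) \in [-M, M]$ for all $t \in [0,T]$ and all $n$. Since $\psi$ is continuous on $\BR$, it is uniformly continuous on the compact interval $[-M, M]$. Thus for any $\eta > 0$ there exists $\gamma > 0$ with $|x - y| < \gamma$ and $x, y \in [-M, M]$ implying $|\psi(x) - \psi(y)| < \eta$. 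Taking $n$ large enough that $\|f_n - f\|_\infty < \gamma$ yields $\|\Psi(f_n) - \Psi(f)\|_\infty \le \eta$, proving continuity of $\Psi$.

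Step two: transfer tightness through $\Psi$. Fix $\eps > 0$. By tightness of $(X_n)_{n \ge 1}$ in $C[0, T]$, there exists a compact set $K_\eps \subseteq C[0, T]$ with $\MP(X_n \in K_\eps) \ge 1 - \eps$ for all $n$. The image $\Psi(K_\eps)$ is compact in $C[0, T]$, being the continuous image of a compact set. Since $\{X_n \in K_\eps\} \subseteq \{\Psi(X_n) \in \Psi(K_\eps)\}$, we get
$$
\MP\bigl(\psi(X_n(\cdot)) \in \Psi(K_\eps)\bigr) \ge \MP(X_n \in K_\eps) \ge 1 - \eps,
$$
for every $n$. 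Since $\eps > 0$ was arbitrary, this exhibits the required inner-regular family of compact sets and proves that $(\psi(X_n(\cdot)))_{n \ge 1}$ is tight in $C[0, T]$.

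There is no real obstacle here — the only delicate point is the verification that $\Psi$ is well-defined and continuous on the whole space $C[0, T]$, which rests on the fact that any uniformly convergent sequence is uniformly bounded and $\psi$ is uniformly continuous on compact subsets of $\BR$. An alternative route via the Arzela-Ascoli characterization of tightness (uniform control of $\psi(X_n(0))$ via tightness of $X_n(0)$, and uniform control of the modulus of continuity $\oa(\psi\circ X_n, [0,T], \de)$ via uniform continuity of $\psi$ on a large interval trapping the trajectories with high probability) would give exactly the same conclusion; but the continuous-mapping phrasing above is cleanest and bypasses splitting into cases.
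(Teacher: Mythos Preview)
Your proof is correct. The paper itself does not give a detailed argument for this lemma; it merely states that it follows from a ``straightforward application of the Arzela--Ascoli theorem'' and omits the details. Your continuous-mapping approach --- defining $\Psi(f) = \psi\circ f$, checking its continuity via uniform continuity of $\psi$ on compacta, and pushing forward the tight family of compact sets --- is a clean and fully rigorous alternative that in fact bypasses Arzela--Ascoli entirely. You also correctly note that the Arzela--Ascoli route (controlling $\psi(X_n(0))$ and the modulus of continuity $\oa(\psi\circ X_n,[0,T],\de)$) would work just as well, which is presumably what the paper has in mind; so your write-up subsumes the paper's intended argument.
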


\begin{lemma}
\label{lemma:stopping-tight}
Take a tight sequence $(X_n)_{n \ge 1}$ of processes in $C([0, T], \BR^d)$. Let $(\tau_n)_{n \ge 1}$ be a sequence of random moments with values in $[0, T]$. Then $(X_n(\cdot\wedge\tau_n))_{n \ge 1}$ is also a tight sequence.
\end{lemma}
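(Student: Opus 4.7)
The plan is to invoke the Arzel\`a--Ascoli characterization of tightness in $C([0,T],\BR^d)$: a sequence $(Y_n)_{n \ge 1}$ is tight if and only if (i) the sequence of initial values $(Y_n(0))_{n \ge 1}$ is tight in $\BR^d$, and (ii) for every $\eps > 0$,
\[
\lim\limits_{\de \to 0}\varlimsup\limits_{n \to \infty}\MP\bigl(\oa(Y_n, [0, T], \de) > \eps\bigr) = 0.
\]
I will verify both conditions for the stopped sequence $Y_n := X_n(\cdot \wedge \tau_n)$, using only that the same two conditions hold for $(X_n)_{n \ge 1}$.

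For condition (i): since $\tau_n \ge 0$ a.s., we have $Y_n(0) = X_n(0 \wedge \tau_n) = X_n(0)$, so tightness of $(Y_n(0))_{n \ge 1}$ is immediate from tightness of $(X_n)_{n \ge 1}$. For condition (ii), the key pointwise observation is that the map $t \mapsto t \wedge \tau_n(\oa)$ is $1$-Lipschitz on $[0, T]$ for each $\oa$. Hence for all $s, t \in [0, T]$ with $|t - s| \le \de$, one has $|(t \wedge \tau_n) - (s \wedge \tau_n)| \le |t - s| \le \de$, which implies
\[
\oa(Y_n, [0, T], \de) \;\le\; \oa(X_n, [0, T], \de)\ \ \text{a.s.}
\]
Consequently $\MP(\oa(Y_n, [0,T],\de) > \eps) \le \MP(\oa(X_n, [0,T],\de) > \eps)$, and taking $\varlimsup_{n \to \infty}$ followed by $\lim_{\de \to 0}$ finishes condition (ii).

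There is no real obstacle here: the only subtlety to flag is that $\tau_n$ need not be a stopping time and need not be measurable in any refined sense, but this is irrelevant because tightness in $C([0,T],\BR^d)$ is a purely distributional (Arzel\`a--Ascoli) property of the trajectories, and the pointwise modulus comparison above holds deterministically for every $\oa \in \Oa$. Thus both ingredients of the Arzel\`a--Ascoli criterion transfer from $(X_n)$ to $(Y_n)$, completing the proof.
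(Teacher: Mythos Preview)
Your proof is correct and matches the paper's approach: the paper simply states that this lemma ``follows from a straightforward application of the Arzel\`a--Ascoli theorem'' without giving details, and your argument (tightness of initial values plus the pointwise inequality $\oa(X_n(\cdot\wedge\tau_n),[0,T],\de)\le\oa(X_n,[0,T],\de)$ coming from the $1$-Lipschitz property of $t\mapsto t\wedge\tau_n$) is exactly the intended straightforward verification.
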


\section*{Acknowledgements}

The author would like to thank \textsc{Cameron Bruggeman}, \textsc{Krzysztof Burdzy}, \textsc{Jean-Pierre Fouque},  \textsc{Ioannis Karatzas}, \textsc{Soumik Pal}, \textsc{Kavita Ramanan}, \textsc{Mykhaylo Shkolnikov}, and \textsc{Ruth Williams} for help and useful discussion. The author would like to thank the reviewer and editor for help and useful remarks. This research was partially supported by  NSF grants DMS 1007563, DMS 1308340, DMS 1405210, and  DMS 1409434.

\medskip\noindent

\bibliographystyle{alpha}

\bibliography{oblique}

\end{document}